\documentclass[11pt]{amsart}
\usepackage{amsmath,amssymb,amsthm,mathtools}
\usepackage[margin=1.15in]{geometry}
\usepackage{booktabs,longtable,array}
\usepackage{microtype}
\usepackage[hidelinks]{hyperref}
\usepackage{enumitem}
\setlist{leftmargin=2em}

\newcommand{\repo}{\url{https://github.com/jaideepsaipadhi/the-auluck-chowla-gupta-conjecture} (archived as \href{https://doi.org/10.5281/zenodo.23241759}{doi:10.5281/zenodo.23241759})}
\newcommand{\script}[1]{\texttt{#1}}

\newtheorem{theorem}{Theorem}[section]
\newtheorem{lemma}[theorem]{Lemma}
\newtheorem{proposition}[theorem]{Proposition}
\newtheorem{corollary}[theorem]{Corollary}
\newtheorem*{maintheorem}{Main Theorem}
\theoremstyle{definition}

\theoremstyle{remark}
\newtheorem{remark}[theorem]{Remark}

\numberwithin{equation}{section}

\newcommand{\RR}{\mathbb{R}}

\newcommand{\ZZ}{\mathbb{Z}}
\newcommand{\Li}{\operatorname{Li}}
\newcommand{\TV}{\operatorname{TV}}
\newcommand{\Var}{\operatorname{Var}}
\newcommand{\erfc}{\operatorname{erfc}}
\newcommand{\sinc}{\operatorname{sinc}}
\newcommand{\re}{\operatorname{Re}}
\newcommand{\im}{\operatorname{Im}}
\newcommand{\E}{\mathbb{E}}
\newcommand{\Prob}{\mathbb{P}}
\newcommand{\dd}{\,\mathrm{d}}
\newcommand{\ii}{\mathrm{i}}

\title[The Auluck--Chowla--Gupta conjecture]{A proof of the Auluck--Chowla--Gupta Conjecture}
\author{Jaideep Sai Padhi}
\address{Purdue University, West Lafayette, IN 47907, USA}
\email{jpadhi@purdue.edu}
\subjclass[2020]{Primary 05A17; Secondary 11P82, 05A20, 65G30}
\keywords{integer partitions, unimodality, saddle-point method, interval arithmetic, computer-assisted proof}

\begin{document}

\begin{abstract}
Let $p(n,k)$ denote the number of partitions of $n$ into exactly $k$ parts. In 1942 Auluck, Chowla and Gupta
conjectured that for every $n$ the sequence $p(n,1),p(n,2),\dots,p(n,n)$ is unimodal. Szekeres proved this for all
sufficiently large $n$, but with a threshold that was never made explicit. We prove the conjecture for every $n\ge 1$.
For $n\ge 10^5$ we give an analytic argument with fully explicit constants: an exact probabilistic representation of the
ratio $p(n,k+1)/p(n,k)$, a second-order saddle-point main term with a rigorous uniform error bound in the transition window,
and separate elementary or saddle-point estimates in the two tails. Every numerical constant in this argument is certified in
ball arithmetic. For $n\le 2\cdot10^5$ the conjecture is verified exhaustively by two independent programs, one of them using
exact integer arithmetic only. Since log-concavity of $k\mapsto p(n,k)$ fails for every $n\ge 4$, the argument works with
first differences and never uses log-concavity.
\end{abstract}

\maketitle

%======================================================================
\section{Introduction}\label{sec:intro}
%======================================================================

For integers $n\ge1$ and $1\le k\le n$ let $p(n,k)$ be the number of partitions of $n$ into exactly $k$ positive parts. A finite
sequence $a_1,\dots,a_N$ of real numbers is \emph{(weakly) unimodal} if there is an index $t$ with
$a_1\le\dots\le a_t\ge a_{t+1}\ge\dots\ge a_N$.

\begin{maintheorem}
For every integer $n\ge1$ the sequence $p(n,1),p(n,2),\dots,p(n,n)$ is unimodal.
\end{maintheorem}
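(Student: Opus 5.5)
The plan follows the abstract. We split at $n_0=10^5$. For $n\le 2\cdot10^5$ we verify unimodality directly. For $n\ge 10^5$ we use an analytic argument with explicit constants. The two ranges overlap, which gives slack.

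\emph{Finite range.} We compute $p(n,k)$ with the recurrence $p(n,k)=p(n-1,k-1)+p(n-k,k)$. Memory is the issue, so we would instead use $p(n,k)=q_k(n-k)$, where $q_k(m)$ is the number of partitions of $m$ into parts of size at most $k$. We sweep $k$ upward, updating the vector $q_k(\cdot)$ in place, and for each $n$ record the sign pattern of $p(n,k+1)-p(n,k)$. Unimodality means the sign sequence never goes from negative back to positive. To limit memory, we compare only consecutive values in exact big-integer arithmetic. A second, independent program does the same computation with modular or floating-interval checks.

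\emph{Analytic range: reduction.} We work with first differences, since log-concavity fails. Write $p(n,k)=q_k(n-k)$. Then $p(n,k+1)-p(n,k)=q_{k+1}(n-k-1)-q_k(n-k)$. A cleaner form is the ratio
\[
p(n,k+1)/p(n,k)=\Prob\bigl(\text{an explicit event}\bigr)\cdot(\text{explicit factor}),
\]
obtained from the conjugate description: these are partitions of $n$ with largest part exactly $k$, generated by $q^k\prod_{j\le k}(1-q^j)^{-1}$. The plan is to express $r_k=p(n,k+1)/p(n,k)$ through the distribution of $X=\sum_{j\le k} jY_j$ under independent geometrics $Y_j$ at a saddle parameter. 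We then show that $k\mapsto r_k-1$ changes sign exactly once. Concretely, we prove three facts:
\begin{enumerate}
\item $r_k>1$ for $k\le k_-$ (left tail);
\item $r_k<1$ for $k\ge k_+$ (right tail);
\item in the window $[k_-,k_+]$ around the mode $k^*\approx \frac{\sqrt{6n}}{2\pi}\log n$, $r_k$ is strictly decreasing in $k$, or at least $r_k-1$ has a single sign change.
\end{enumerate}

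\emph{Tails and window.} For the left tail, where $k$ is small relative to $\sqrt n$, an elementary injection works: add a part and rebalance. Alternatively, the polynomial asymptotics $p(n,k)\approx n^{k-1}/(k!(k-1)!)$ with explicit errors give $r_k\gg1$. For the right tail, with $k\gg\sqrt n\log n$, we use $p(n,k)=p_{\le k}(n-k)$ together with explicit bounds on partitions of $m$ into at most $k$ parts. Both require only crude saddle estimates with explicit constants.

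The main obstacle is the transition window. There $r_k=1+O(1/\sqrt n)$, so monotonicity requires a second-order saddle-point expansion
\[
\log r_k = A(n,k)+B(n,k)/\sqrt n+E,\qquad |E|\le C n^{-1}\log^c n,
\]
with $C$ fully explicit. We would first do a local CLT (Edgeworth) expansion for $X$, with Berry--Esseen-type remainder bounds computed from explicit cumulant bounds of the geometric sum. Every constant is checked in ball arithmetic. We would then show that $\partial_k$ of the main term is bounded away from zero, at scale $\gg n^{-1/2}\cdot(\text{window width})^{-1}$, more strongly than the error can overturn. This forces a single crossing of $r_k=1$, so the sequence is unimodal for all $n\ge10^5$. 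Combined with the finite range, this proves the Main Theorem.
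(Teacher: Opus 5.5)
Your architecture matches the paper's: an in-place column sweep for $n\le2\cdot10^5$, the variable $X=\sum jY_j$ at the saddle point, and tails plus a window in which a monotone main term is compared with an explicit error. Two steps, however, would fail as you state them.

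\emph{The precision in the window is one order too weak.} There the step $\log r_k-\log r_{k+1}$ is about $\theta q$. Here $\theta\asymp n^{-1/2}$ is the saddle point and $q=e^{-(k+1)\theta}=\theta e^{-\nu'}$ with $\nu'$ bounded. So the step is $\asymp n^{-1}$, and near the right end of the window ($\nu'\approx2$) it is only about $e^{-2}\theta^2$. A single crossing follows only if $|E_k|+|E_{k+1}|$ is below the step; otherwise a run of undetermined indices can sit at the crossing, and their signs may alternate.

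A bound $|E|\le Cn^{-1}\log^cn$ is at least as large as the step, and this is not a matter of tuning constants: terms of exactly that size are present. For example, with $\nu=(k+1)\theta\asymp\log n$, both
$c_2/(2B)=\nu^2\theta q/\bigl(2b(1-q)^2\bigr)$ and
$c_1\kappa_3/(2B^2)\approx(1+\nu e^{-\nu'})\,s_3\theta^2/(2b^2)$
exceed the step, by factors of order $\log^2n$ and $\log n$ respectively.

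The paper therefore puts all such order-$\theta^2$ contributions, with their powers of $\nu$, into the main term $M_2$ of~\eqref{eq:M2}. This leaves a remainder of order $\theta^{7/2}$ up to logarithms. It then certifies $\varepsilon_k\le0.4282\,\theta q$ (Theorem~\ref{thm:A}) against a step of $L_2$ of at least $0.91\,\theta q$ (Theorem~\ref{thm:B}), both measured in the local unit $\theta q$.

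Reaching that accuracy economically is where the form $R_k/g_0-1=\tilde N/D$, with the same weight $\chi$ in both integrals, matters. An error in $D$ is multiplied by $\tilde N/D=O(\theta^2)$. Separate Edgeworth expansions of $\Prob(X=m-1-sK)$ and $\Prob(X=m)$ would instead each need relative accuracy $o(\theta^2)$.

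Berry--Esseen-type bounds are useless at this scale. They bound distribution functions, not point probabilities, and only to within order $\theta^{1/2}$. What you need is Fourier inversion together with an explicit minor-arc bound, such as $|\chi(\varphi)|\le e^{-c_\ast/\theta}$ for $0.4\theta\le|\varphi|\le\pi$ (Lemma~\ref{lem:minor}, Corollary~\ref{cor:cstar}). Your plan does not supply this.

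\emph{Your left tail does not reach the window.} An injection, or $p(n,k)\approx n^{k-1}/(k!(k-1)!)$ with explicit errors, controls $r_k$ only for $k=O(n^{1/3})$; the paper's binomial sandwich (Proposition~\ref{prop:D}) stops at $1.7n^{1/3}$. The window, however, begins near $\mu-2\beta\asymp\sqrt n\log n$.

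In between, $k\theta$ runs from nearly $0$ up to order $\log n$. You need a zeroth-order version of the same ratio estimate that is uniform over that whole range. In the paper this is Lemmas~\ref{lem:C0}--\ref{lem:C1} and Propositions~\ref{prop:CLa} and~\ref{prop:CLb}. The regime $k\theta\le0.1$, where the large parameter is $k$ rather than $1/\theta$, needs its own splitting of the minor arcs.

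Widening the window instead would require your second-order expansion to be uniform down to $k\approx n^{1/3}$, which your plan does not address.
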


\noindent In fact we prove the strict form, in the shape of Szekeres' theorem, for every $n$
(Theorem~\ref{thm:strict} in Section~\ref{sec:finite}): for $n\ge4$ there is $t$ with
\[
  p(n,1)<\dots<p(n,t)\ \ge\ p(n,t+1)>p(n,t+2)>\dots>p(n,n-1)=p(n,n),
\]
and $p(n,t)=p(n,t+1)$ only for $n\in\{5,6,8,13,14\}$.

\subsection*{History}
The statement was conjectured by Auluck, Chowla and Gupta~\cite{ACG42}, who also showed that the mode $k_0(n)$ satisfies
$n^{1/2}<k_0\le(1+\delta)c^{-1}n^{1/2}\log n$ for large $n$, where $c=\pi\sqrt{2/3}$. Erd\H{o}s~\cite{Erdos46} determined $k_0$ up to $o(n^{1/2})$
(see also Erd\H{o}s--Lehner~\cite{EL41}) and wrote that he was unable to prove or disprove the unimodality conjecture.
Szekeres~\cite{Sz51,Sz53} obtained an asymptotic formula for $p(n,k)$ uniform in $k$, and in~\cite[Theorem~2]{Sz53} proved that
\emph{if $n$ is sufficiently large} there is $k_1(n)$ with $p(n,k)<p(n,k+1)$ for $k<k_1$ and $p(n,k)>p(n,k+1)$ for $k>k_1$, remarking
that this proves the conjecture of Auluck, Chowla and Gupta. His constants are not made explicit, so the argument gives no value of
$n$ beyond which it applies (see also~\cite{Sz90}). Canfield~\cite{Can97} later gave a different derivation of Szekeres' formula, by
recursions rather than by contour integration. In a 2003 survey Canfield~\cite{Can03} notes that unimodality had been observed
computationally for every row with $n\le2000$, that Szekeres' theorem covers only sufficiently large $n$, and that the question is
therefore ``technically not completely resolved''; Balazard~\cite{Bal90} likewise lists it as known only for large $n$. As recently as 2015, Chubarikov and Fedorov~\cite{CF15} describe the monotonicity
conjecture of Auluck, Chowla and Gupta as unsolved. To our knowledge no proof for all $n$ has appeared before.

\subsection*{Why first differences}
Unimodality would follow from log-concavity, $p(n,k)^2\ge p(n,k-1)p(n,k+1)$, but this fails at the tail of every row with
$n\ge4$: for $n/2\le k\le n$ one has $p(n,k)=p(n-k)$ (Lemma~\ref{lem:E}), so $p(n,n)=p(n,n-1)=1$ and $p(n,n-2)=2$, and
$p(n,n-1)^2=1<2=p(n,n)p(n,n-2)$. We therefore work directly with the sign of the first difference, through the ratio
\[
  R_k=R_k(n):=\frac{p(n,k+1)}{p(n,k)}\qquad(1\le k\le n-1).
\]
The row is unimodal if and only if the signs of $\log R_k$, read left to right, never change from negative to positive.

\subsection*{Strategy}
Put
\begin{equation}\label{eq:beta}
  \beta:=\frac{\sqrt{6n}}{\pi},\qquad \ell:=\log\beta,\qquad \mu:=\beta\ell .
\end{equation}
The mode of the row lies within $O(\beta)$ of $\mu$ (this is the Gumbel scaling of Erd\H{o}s and Lehner~\cite{EL41}).
For $n\ge10^5$ we split $1\le k\le n-1$ into five ranges,
\[
\begin{gathered}
  D=[1,k_D],\qquad CL=(k_D,\mu-2\beta],\qquad W=(\mu-2\beta,\mu+2\beta),\\
  CR=[\mu+2\beta,\lceil n/2\rceil],\qquad E=[\lceil n/2\rceil,n-1],
\end{gathered}
\]
with $k_D=\lfloor1.7n^{1/3}\rfloor$, and determine the sign of $\log R_k$ on each range separately.

\begin{itemize}
\item On $D$ an elementary binomial sandwich shows $R_k>1$ (Section~\ref{sec:D}).
\item On $E$ an exact bijection shows $R_k\le1$ (Lemma~\ref{lem:E}).
\item Everything else rests on an exact identity (Section~\ref{sec:rep}): $R_k$ equals the ratio of two Fourier integrals
  of the characteristic function of a sum of independent geometric variables, taken at its saddle point $\theta$. This
  reduces the problem to estimating a ratio of integrals with the \emph{same} weight, so that all exponentially large factors
  cancel exactly.
\item On $CL$ and $CR$ a zeroth-order version of this estimate is enough to give $R_k>1$ and $R_k<1$ respectively
  (Section~\ref{sec:CLCR}).
\item In the window $W$, where the sign changes, we need more. We define an explicit second-order main term $L_{2,k}$
  (Section~\ref{sec:main}) and prove $|\log R_k-L_{2,k}|\le 0.4282\,\theta q$, where $q=e^{-(k+1)\theta}$ (Theorem~\ref{thm:A}).
  We then show that $L_2$ decreases by at least $0.91\,\theta q$ at each step (Theorem~\ref{thm:B}). Since
  $0.91>2\times0.4282$, at most one $k\in W$ has an undetermined sign, and that $k$ is adjacent to the sign change of $L_2$.
\end{itemize}

The constants in these estimates are bounded by rigorous ball arithmetic (Arb~\cite{Arb}, through python-flint) over finite
covers of the relevant parameter ranges. The analytic argument is carried out for $n\ge10^5$, and the range $n\le2\cdot10^5$
is checked exhaustively (Section~\ref{sec:finite}).

\subsection*{Computer assistance}
All computer-assisted steps are listed in Section~\ref{sec:computer} (Table~\ref{tab:certs}), each with the script that
certifies it and the exact statement certified. The scripts, their expected outputs and a driver that re-runs everything and
compares outputs are available at \repo. The mathematical arguments in this paper do not depend on any computation other than
those listed there.

\subsection*{Organisation}
Section~\ref{sec:rep} gives the notation and the exact representation. Section~\ref{sec:deriv} proves uniform derivative bounds.
Section~\ref{sec:main} defines the main term and states and proves the error bound of Theorem~\ref{thm:A}; its uniformity rests on
the ingredient bounds of Section~\ref{sec:ingredients}. Section~\ref{sec:B} analyses the step of $L_2$. Section~\ref{sec:CLCR}
treats the tail regions, Section~\ref{sec:assembly} assembles the proof for $n\ge10^5$, Section~\ref{sec:finite} describes the
finite verification, and Section~\ref{sec:computer} lists the computer-assisted inequalities and the trust base. The longer
error algebra is in the appendices.

\subsection*{Conventions}
$\zeta_2:=\zeta(2)=\pi^2/6$, so that $n=\zeta_2\beta^2$. $\Li_s(y)=\sum_{l\ge1}y^l/l^s$ is the polylogarithm; for integers
$r\ge1$, $\Li_{1-r}(y)=\sum_l l^{r-1}y^l$ is a rational function of $y$. Logarithms of complex numbers are principal and are only
taken of numbers with positive real part. For a function $g$ and an interval $I$, $\Var_I g$ is the total variation of $g$ on $I$,
and $\TV(g)=\Var_{[0,\infty)}g$.

%======================================================================
\section{Notation and an exact reduction}\label{sec:rep}
%======================================================================

Fix $n$ and $2\le k\le n-2$. Write $a_k=p(n,k)$ and
\[
  m:=n-k\ \ (\ge2),\qquad K:=k+1 .
\]
Let $f_k(N)$ be the number of partitions of $N$ into parts of size at most $k$, with $f_k(N)=0$ for $N<0$.

\begin{lemma}\label{lem:comb}
$p(n,k)=f_k(n-k)$, and $f_{k+1}(N)=\sum_{s\ge0}f_k(N-sK)$. Consequently
$a_{k+1}=\sum_{s\ge0}f_k(m-1-sK)$, a finite sum.
\end{lemma}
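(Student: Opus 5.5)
The plan has three short combinatorial steps: conjugation for the first identity, a split by multiplicity of the largest allowed part for the recursion, and a substitution to combine them.

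\emph{First identity.} Conjugation (transposing the Ferrers diagram) is a bijection between partitions of $n$ into exactly $k$ parts and partitions of $n$ whose largest part is exactly $k$. In the latter, delete one copy of the part $k$. This gives a bijection onto partitions of $n-k$ into parts of size at most $k$; the inverse adjoins a part $k$. Hence $p(n,k)=f_k(n-k)$. Alternatively, one can subtract $1$ from each of the $k$ parts, which leaves a partition of $n-k$ into at most $k$ parts, and then conjugate.

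\emph{Recursion.} Classify the partitions of $N$ into parts at most $k+1=K$ by the multiplicity $s\ge0$ of the part $K$. Removing those $s$ copies leaves a partition of $N-sK$ into parts at most $k$, and this correspondence is bijective. Therefore $f_{k+1}(N)=\sum_{s\ge0}f_k(N-sK)$. The sum is finite, since $f_k(N-sK)=0$ once $sK>N$ by the stated convention. The same identity can be read off the generating function
\[
\prod_{j\le k+1}(1-x^j)^{-1}=(1-x^{K})^{-1}\prod_{j\le k}(1-x^j)^{-1}.
\]

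\emph{Consequence.} Apply the first identity with $k+1$ in place of $k$ to get $a_{k+1}=p(n,k+1)=f_{k+1}(n-k-1)=f_{k+1}(m-1)$. Then expand with the recursion at $N=m-1$, which gives $a_{k+1}=\sum_{s\ge0}f_k(m-1-sK)$. This sum has only the terms with $0\le s\le (m-1)/K$. The hypothesis $k\le n-2$ guarantees $m-1\ge1$, so the indices are meaningful.

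There is no real obstacle; the only care needed is to keep the convention $f_k(N)=0$ for $N<0$ consistent and to check the index shift $n-(k+1)=m-1$.
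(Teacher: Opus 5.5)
Your proposal is correct and follows the same route as the paper: conjugate and delete one part $k$ for the first identity, classify by the multiplicity of the part $K$ for the recursion, and then use $a_{k+1}=f_{k+1}(m-1)$. Your remarks on finiteness of the sum and on the convention $f_k(N)=0$ for $N<0$ are fine additions and do not change the argument.
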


\begin{proof}
These are standard (see~\cite{Andrews}). The conjugate of a partition of $n$ into exactly $k$ parts has largest part exactly $k$; removing one part $k$ is a bijection
onto partitions of $n-k$ into parts $\le k$. For the second identity, classify partitions of $N$ into parts $\le K$ by the
number $s$ of parts equal to $K$ and remove them. Finally $a_{k+1}=f_{k+1}(m-1)$.
\end{proof}

\subsection{The saddle point and a random variable}
For $t>0$ let
\begin{equation}\label{eq:S}
  S_k(t):=\sum_{i=1}^{k}\frac{i}{e^{it}-1}.
\end{equation}
$S_k$ is continuous and strictly decreasing from $+\infty$ to $0$, so there is a unique $\theta=\theta(n,k)>0$ with
$S_k(\theta)=m$. We call $\theta$ the \emph{saddle point} and put
\begin{equation}\label{eq:params}
  x:=e^{-\theta},\quad q:=x^K=e^{-K\theta},\quad \nu:=K\theta,\quad \nu':=\nu+\log\theta,\quad v:=k\theta=\nu-\theta,
\end{equation}
so that $q=\theta e^{-\nu'}$. Let $Y_1,\dots,Y_k$ be independent with $\Prob(Y_i=y)=(1-x^i)x^{iy}$ $(y\ge0)$ and put
$X:=\sum_{i\le k}iY_i$.

\begin{lemma}\label{lem:X}
$\Prob(X=N)=f_k(N)x^N\prod_{i\le k}(1-x^i)$, and $\E X=S_k(\theta)=m$.
\end{lemma}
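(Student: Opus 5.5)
The plan is to compute the distribution of $X$ from the independence of the $Y_i$, using generating functions. I first check that each $Y_i$ is a genuine random variable. Since $0<x<1$, we have $0<x^i<1$, and
\[
\sum_{y\ge0}(1-x^i)x^{iy}=1 .
\]
So $Y_i$ is geometric, and its probability generating function is
\[
\E z^{Y_i}=\frac{1-x^i}{1-x^iz}\qquad(|z|<x^{-i}).
\]
It follows that $iY_i$ has generating function $(1-x^i)/(1-(xz)^i)$.

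By independence, for $|z|<1/x$,
\[
\E z^{X}=\prod_{i\le k}\frac{1-x^i}{1-(xz)^i}=\prod_{i\le k}(1-x^i)\sum_{N\ge0}f_k(N)(xz)^N .
\]
Here I use the classical Euler identity $\prod_{i\le k}(1-w^i)^{-1}=\sum_N f_k(N)w^N$ for $|w|<1$. This identity comes from expanding each factor as a geometric series; the exponent of $w^i$ chosen in the $i$-th factor is the multiplicity of part $i$.

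Comparing coefficients of $z^N$ gives $\Prob(X=N)=f_k(N)x^N\prod_{i\le k}(1-x^i)$. One can also argue directly: $X=N$ exactly when $(Y_1,\dots,Y_k)$ is the multiplicity vector of a partition of $N$ into parts $\le k$. Each such vector has probability $\prod_i(1-x^i)x^{iY_i}=\prod_i(1-x^i)\,x^N$. Summing over the $f_k(N)$ such vectors gives the same formula. This direct bijective argument is probably the cleanest to write.

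For the mean, I use linearity of expectation. The mean of a geometric variable with success parameter $1-x^i$ is
\[
\E Y_i=\frac{x^i}{1-x^i}=\frac{1}{e^{i\theta}-1}.
\]
Therefore
\[
\E X=\sum_{i\le k}\frac{i}{e^{i\theta}-1}=S_k(\theta),
\]
and this equals $m$ by the definition of $\theta$ in \eqref{eq:S}.

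There is no real obstacle here. The only point needing care is the justification of the expectation: $X$ is a finite sum of nonnegative variables, each with finite mean, so linearity applies and $\E X$ is finite.
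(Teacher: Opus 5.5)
Your proof is correct. The direct argument you single out as the cleanest is exactly the paper's proof: each multiplicity vector $(y_i)$ with $\sum_i iy_i=N$ has probability $\prod_i(1-x^i)\,x^N$, there are $f_k(N)$ such vectors, and $\E Y_i=x^i/(1-x^i)=1/(e^{i\theta}-1)$. The generating-function computation is an equivalent repackaging of the same count and can be dropped.
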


\begin{proof}
$\Prob(X=N)=\prod_i(1-x^i)\cdot x^N\cdot\#\{(y_i):\sum iy_i=N\}$ and the count is $f_k(N)$, with $y_i$ the multiplicity of the
part $i$. Also $\E Y_i=x^i/(1-x^i)=1/(e^{i\theta}-1)$.
\end{proof}

\begin{proposition}[Exact representation]\label{prop:rep}
Let $\chi(\varphi):=\E\,e^{\ii\varphi(X-m)}$ and
\[
  g_0:=\frac{x}{1-q},\qquad H(\varphi):=\ii\varphi-\log\bigl(1-qe^{\ii K\varphi}\bigr)+\log(1-q).
\]
Then
\begin{equation}\label{eq:rep1}
  R_k=\sum_{s\ge0}x^{1+sK}\,\frac{\Prob(X=m-1-sK)}{\Prob(X=m)},
\end{equation}
and, with $D:=\int_{-\pi}^{\pi}\chi(\varphi)\dd\varphi=2\pi\Prob(X=m)>0$ and $\tilde N:=\int_{-\pi}^{\pi}\chi(\varphi)\bigl(e^{H(\varphi)}-1\bigr)\dd\varphi$,
\begin{equation}\label{eq:rep2}
  \frac{R_k}{g_0}-1=\frac{\tilde N}{D}.
\end{equation}
\end{proposition}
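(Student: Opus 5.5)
The plan is to derive \eqref{eq:rep1} directly from Lemmas~\ref{lem:comb} and~\ref{lem:X}, and then obtain \eqref{eq:rep2} by Fourier inversion on $\ZZ$. For \eqref{eq:rep1}, write $C:=\prod_{i\le k}(1-x^i)$. Lemma~\ref{lem:X} gives $f_k(N)=\Prob(X=N)x^{-N}C^{-1}$ for every $N\ge0$, and both sides vanish for $N<0$. By Lemma~\ref{lem:comb},
\[
  a_k=f_k(m)\qquad\text{and}\qquad a_{k+1}=\sum_{s\ge0}f_k(m-1-sK).
\]
Substituting and dividing, the constant $C$ cancels, and the powers of $x$ combine as $x^{-(m-1-sK)}/x^{-m}=x^{1+sK}$. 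This is \eqref{eq:rep1}. The sum is finite, and $\Prob(X=m)>0$ because $f_k(m)\ge1$ (take $m$ ones, which is allowed since $k\ge2$).

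For \eqref{eq:rep2}, I will use the inversion formula for the integer-valued variable $X$: for every $j\in\ZZ$,
\[
  2\pi\,\Prob(X=m-j)=\int_{-\pi}^{\pi}\chi(\varphi)e^{\ii j\varphi}\dd\varphi.
\]
This follows from the definition of $\chi$ together with $\int_{-\pi}^{\pi}e^{\ii r\varphi}\dd\varphi=2\pi[r=0]$. Exchanging the sum and the integral is harmless because $\E|e^{\ii\varphi(X-m)}|=1$. The case $j=0$ gives $D=2\pi\Prob(X=m)$. Applying \eqref{eq:rep1} with $j=1+sK$ gives
\[
  2\pi\Prob(X=m)\,R_k=\int_{-\pi}^{\pi}\chi(\varphi)\sum_{s\ge0}x^{1+sK}e^{\ii(1+sK)\varphi}\dd\varphi.
\]
Here the sum runs over all $s\ge0$, since the added terms have $\Prob=0$. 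The geometric series converges uniformly because $|qe^{\ii K\varphi}|=q<1$, and it sums to
\[
  \frac{xe^{\ii\varphi}}{1-qe^{\ii K\varphi}}=g_0\,e^{\ii\varphi}\,\frac{1-q}{1-qe^{\ii K\varphi}}=g_0\,e^{H(\varphi)}.
\]
The last identity uses the principal logarithms in $H$; these are well defined since $\re(1-qe^{\ii K\varphi})\ge1-q>0$. Dividing by $g_0D$ and subtracting $1=D/D$ gives \eqref{eq:rep2}.

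There is no real obstacle here. The only points needing care are the justification of the sum/integral interchange, which uniform convergence on the compact interval handles, and checking that the logarithm branches in $H$ make $e^{H}$ equal the rational expression exactly. The latter holds because $\exp\circ\log$ is the identity on numbers with positive real part.
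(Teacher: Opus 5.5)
Your proposal is correct and follows the paper's proof essentially step for step: substitute Lemma~\ref{lem:X} into Lemma~\ref{lem:comb} to obtain \eqref{eq:rep1}, apply Fourier inversion on $\ZZ$, exchange the $s$-sum with the integral (the terms are dominated by $x^{1+sK}$ and $|\chi|\le1$), sum the geometric series to $xe^{\ii\varphi}/(1-qe^{\ii K\varphi})=g_0e^{H}$, and subtract $D/D$. One small remark: positivity of $f_k(m)$ needs only $k\ge1$, not $k\ge2$, but this does not affect the argument.
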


\begin{proof}
Since $f_k(m)\ge1$ we have $\Prob(X=m)>0$. By Lemma~\ref{lem:X}, $f_k(N)=\Prob(X=N)x^{-N}/\prod(1-x^i)$, and inserting this into
Lemma~\ref{lem:comb} gives~\eqref{eq:rep1}. The series $\chi(\varphi)=\sum_N\Prob(X=N)e^{\ii\varphi(N-m)}$ converges absolutely and
uniformly, so $\frac1{2\pi}\int_{-\pi}^{\pi}\chi(\varphi)e^{\ii j\varphi}\dd\varphi=\Prob(X=m-j)$. Inserting this into~\eqref{eq:rep1} and
exchanging the sum over $s$ with the integral (dominated by $\sum_s x^{1+sK}$) gives
$R_k=\int\chi G/\int\chi$ with $G(\varphi)=\sum_s x^{1+sK}e^{\ii(1+sK)\varphi}=xe^{\ii\varphi}/(1-qe^{\ii K\varphi})$. Since
$\re(1-qe^{\ii K\varphi})\ge1-q>0$, $H$ is well defined and $e^H=G/g_0$, which gives~\eqref{eq:rep2}.
\end{proof}

\begin{remark}
Identity~\eqref{eq:rep1} holds for every $x\in(0,1)$, not only at the saddle point, because the $x$-dependence cancels. The
saddle point is chosen so that $\chi$ is concentrated near $\varphi=0$. The form~\eqref{eq:rep2}, rather than $R_k=N/D$, matters:
a relative error $\delta$ in $D$ costs only $\delta\,|R_k/g_0-1|$, which is of order $\delta\theta^2$.
\end{remark}

By independence,
\begin{equation}\label{eq:psi}
  \chi=e^{\psi},\qquad \psi(\varphi):=-\ii m\varphi+\sum_{i\le k}\Bigl[\log(1-x^i)-\log\bigl(1-x^ie^{\ii i\varphi}\bigr)\Bigr],
\end{equation}
a real-analytic function of $\varphi\in\RR$. Since $X$ is real-valued, $\chi(-\varphi)=\overline{\chi(\varphi)}$, so
$\re\psi$ is even and $\im\psi$ is odd. Moreover
\begin{equation}\label{eq:modchi}
  -2\log|\chi(\varphi)|=S(\varphi):=\sum_{i\le k}\log\Bigl(1+\frac{\sin^2(i\varphi/2)}{\sinh^2(i\theta/2)}\Bigr),
\end{equation}
because $|1-ye^{\ii a}|^2/(1-y)^2=1+\sin^2(a/2)/\sinh^2(u/2)$ for $y=e^{-u}$. Every summand of $S$ is nonnegative.

\subsection{Cumulants and the functions \texorpdfstring{$f_r$, $F_r$}{f\_r, F\_r}}
For $r\ge2$ put
\begin{equation}\label{eq:kappa}
  \kappa_r:=\sum_{i\le k}i^r\Li_{1-r}(x^i),\qquad B:=\kappa_2,\qquad
  c_1:=1+\frac{Kq}{1-q},\qquad c_r:=K^r\Li_{1-r}(q)\ (r\ge2).
\end{equation}
For $r\ge1$ let $f_r(u):=u^r\Li_{1-r}(e^{-u})$ for $u>0$, extended continuously by $f_r(0)=(r-1)!$, and let
$F_r(v):=\int_0^vf_r$. Thus $f_1(u)=u/(e^u-1)$ and $f_2(u)=(u/2)^2/\sinh^2(u/2)$; both are decreasing and take values in $(0,1]$.
We use the scaled quantities
\begin{equation}\label{eq:scaled}
  s_r:=\theta^{r+1}\kappa_r=\theta\sum_{i\le k}f_r(i\theta),\qquad b:=s_2=\theta^3B,\qquad c_r\theta^r=f_r(\nu)\ (r\ge2),
  \qquad c_1\theta=\theta+f_1(\nu).
\end{equation}

\begin{lemma}[Closed form of $F_r$]\label{lem:Fr}
For $r\ge1$ and $v>0$,
\[
  F_r(v)=r!\Bigl(\zeta_2-\sum_{j=0}^{r}\frac{v^j}{j!}\Li_{2-j}(e^{-v})\Bigr),\qquad F_r(\infty)=r!\,\zeta_2 .
\]
In particular $F_r$ is nondecreasing and bounded by $r!\zeta_2$.
\end{lemma}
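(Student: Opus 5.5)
The plan is to find an explicit antiderivative of $f_r$ and evaluate it between $0$ and $v$. Put $y=e^{-u}$, so $dy/du=-y$. From the series $\Li_s(y)=\sum_l y^l/l^s$ we get
\[
\frac{d}{du}\Li_s(e^{-u})=-\Li_{s-1}(e^{-u}).
\]
This holds termwise for $u>0$, and the series converges together with its derivatives for $y<1$.

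Define
\[
G_r(u):=-r!\sum_{j=0}^{r}\frac{u^j}{j!}\Li_{2-j}(e^{-u}).
\]
Differentiate term by term. The $j$-th term contributes
\[
-\frac{r!}{(j-1)!}\,u^{j-1}\Li_{2-j} \;+\; \frac{r!}{j!}\,u^j\Li_{1-j},
\]
where the first piece is absent for $j=0$. This sum telescopes, because the second piece at index $j-1$ cancels the first piece at index $j$. Only the top term survives:
\[
\frac{r!}{r!}\,u^r\Li_{1-r}(e^{-u})=f_r(u).
\]
Hence $G_r'=f_r$ on $(0,\infty)$, and so $F_r(v)=G_r(v)-\lim_{u\to0^+}G_r(u)$. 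Since $f_r$ is continuous at $0$ (this follows from the closed form), $F_r(v)=\int_0^v f_r$ is exactly $G_r(v)-G_r(0^+)$.

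The main obstacle is the boundary limit at $u\to0^+$, because $\Li_{2-j}(e^{-u})$ blows up for $j\ge2$. For these terms I would use the standard pole behaviour: $\Li_{1-r}(e^{-u})\sim (r-1)!\,u^{-r}$ for $r\ge1$, so $\Li_{2-j}(e^{-u})=O(u^{1-j})$ for $j\ge2$. (For $j=2$, $\Li_0(y)=y/(1-y)$, which is $O(1/u)$.) Consequently $u^j\Li_{2-j}=O(u)\to0$ for $j\ge2$.

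The remaining two terms are as follows. For $j=0$, $\Li_2(e^{-u})\to\zeta(2)=\zeta_2$. For $j=1$, $u\Li_1(e^{-u})=-u\log(1-e^{-u})\to0$. Therefore $G_r(0^+)=-r!\,\zeta_2$, which gives the stated formula. To be fully rigorous about the pole asymptotic I would note that $\Li_{-n}(e^{-u})=\sum_l l^n e^{-lu}$ is bounded by $C_n u^{-n-1}$, which can be seen by comparing the sum with $\int_0^\infty t^n e^{-tu}\,dt$ plus a boundary term. This bound is all that is needed.

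For the value at infinity, as $v\to\infty$ every term $v^j\Li_{2-j}(e^{-v})=O(v^je^{-v})\to0$, so $F_r(\infty)=r!\,\zeta_2$. Finally, $f_r=u^r\sum_l l^{r-1}e^{-lu}>0$ for $u>0$, so $F_r$ is nondecreasing. Together with the limit this gives $0\le F_r\le r!\,\zeta_2$.
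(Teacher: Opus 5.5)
Your proof is correct and follows essentially the same route as the paper. Both arguments show that the derivative of the polylogarithm sum telescopes via $\frac{d}{du}\Li_s(e^{-u})=-\Li_{s-1}(e^{-u})$. Both then take the boundary limits at $0^+$, using $\Li_{2-j}(e^{-u})=O(u^{1-j})$ for $j\ge2$, and at $\infty$. The only difference is cosmetic: you leave the constant $r!\zeta_2$ out of $G_r$ and recover it from $G_r(0^+)=-r!\zeta_2$, and you additionally justify the pole bound and the monotonicity, which the paper leaves implicit.
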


\begin{proof}
Call the right side $G_r$. From $\frac{d}{dv}\Li_s(e^{-v})=-\Li_{s-1}(e^{-v})$ and the Leibniz rule, the derivative of the sum over $j$
telescopes to $-v^r\Li_{1-r}(e^{-v})/r!$, so $G_r'=f_r$. As $v\to0^+$, $\Li_2(e^{-v})\to\zeta_2$, $v\Li_1(e^{-v})=-v\log(1-e^{-v})\to0$,
and $v^j\Li_{2-j}(e^{-v})=O(v)$ for $j\ge2$ because $\Li_{2-j}(e^{-v})=O(v^{1-j})$; hence $G_r(0^+)=0=F_r(0)$. Since $f_r$ is continuous
on $[0,v]$, $F_r=G_r$. As $v\to\infty$ every $v^j\Li_{2-j}(e^{-v})$ tends to $0$.
\end{proof}

\subsection{Main term}
Define
\begin{equation}\label{eq:M2}
\begin{aligned}
  N_0&:=1+\frac{\kappa_4}{8B^2}-\frac{5\kappa_3^2}{24B^3},\qquad
  E_2:=\frac1B\Bigl(1+\frac{\kappa_4}{2B^2}-\frac{5\kappa_3^2}{4B^3}\Bigr),\\
  E_1&:=\frac1{N_0}\Bigl(\frac{\kappa_3}{2B^2}-\frac{\kappa_5}{8B^3}+\frac{35\,\kappa_3\kappa_4}{48\,B^4}-\frac{35\,\kappa_3^3}{48\,B^5}\Bigr),\\
  M_2&:=1+c_1E_1-\frac{(c_2+c_1^2)E_2}{2}-\frac{5c_3\kappa_3}{12B^3}+\frac{c_4}{8B^2},
\end{aligned}
\end{equation}
and
\begin{equation}\label{eq:L2}
  L_k:=\log R_k,\qquad L_{2,k}:=-\theta-\log(1-q)+\log M_2 = \log g_0+\log M_2 .
\end{equation}
The origin of $M_2$ is explained in Section~\ref{sec:main}: it is the closed form of the ratio of the Gaussian model integrals
up to the order needed. Note that $L_k-L_{2,k}=\log\bigl(R_k/(g_0M_2)\bigr)$.

%======================================================================
\section{Uniform derivative bounds}\label{sec:deriv}
%======================================================================

\begin{lemma}\label{lem:psider}
$\psi(0)=\psi'(0)=0$, $\psi^{(r)}(0)=\ii^r\kappa_r$ and $|\psi^{(r)}(\varphi)|\le\kappa_r$ for all $\varphi\in\RR$ and $r\ge2$.
\end{lemma}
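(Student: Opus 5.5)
We differentiate \eqref{eq:psi} termwise. Write $y_i:=x^i\in(0,1)$ and $\ell_i(\varphi):=-\log(1-y_ie^{\ii i\varphi})=\sum_{l\ge1}y_i^l e^{\ii il\varphi}/l$. The power series converges absolutely and uniformly in $\varphi\in\RR$ together with all its termwise derivatives, since $|y_ie^{\ii i\varphi}|=y_i<1$. Hence
\[
  \psi(\varphi)=-\ii m\varphi+\sum_{i\le k}\Bigl[\log(1-y_i)+\sum_{l\ge1}\frac{y_i^l}{l}e^{\ii il\varphi}\Bigr],
\]
and this expansion may be differentiated termwise. At $\varphi=0$ the bracket equals $\log(1-y_i)-\log(1-y_i)=0$, so $\psi(0)=0$.

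For $r\ge1$ the $r$-th derivative is
\[
  \psi^{(r)}(\varphi)=-\ii m\,\delta_{r1}+\sum_{i\le k}\sum_{l\ge1}\frac{y_i^l}{l}(\ii il)^re^{\ii il\varphi}
  =-\ii m\,\delta_{r1}+\ii^r\sum_{i\le k}i^r\sum_{l\ge1}l^{r-1}y_i^le^{\ii il\varphi}.
\]
For $r=1$ at $\varphi=0$, the inner sum is $\sum_l y_i^l=y_i/(1-y_i)=1/(e^{i\theta}-1)$. So $\psi'(0)=-\ii m+\ii S_k(\theta)=0$ by the definition of the saddle point, equivalently $\E X=m$ from Lemma~\ref{lem:X}. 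For $r\ge2$ at $\varphi=0$, the inner sum is $\Li_{1-r}(y_i)$, so by \eqref{eq:kappa} we get $\psi^{(r)}(0)=\ii^r\sum_i i^r\Li_{1-r}(x^i)=\ii^r\kappa_r$.

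For the uniform bound with $r\ge2$, apply the triangle inequality to the absolutely convergent double series. Since $|e^{\ii il\varphi}|=1$ and all coefficients $i^rl^{r-1}y_i^l$ are nonnegative,
\[
  |\psi^{(r)}(\varphi)|\le\sum_{i\le k}i^r\sum_{l\ge1}l^{r-1}y_i^l=\kappa_r .
\]

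The only point needing care is justifying termwise differentiation. On all of $\RR$ the $r$-times differentiated series is dominated by $\sum_l l^{r-1}y_i^l<\infty$, uniformly in $\varphi$, so the Weierstrass M-test gives uniform convergence of every derivative series. There is also a branch question: we must check that the principal logarithm in \eqref{eq:psi} agrees with the power series. It does, because $\re(1-y_ie^{\ii i\varphi})>0$ and both sides are continuous in $\varphi$, agree at $\varphi=0$, and have the same derivative. No real obstacle is expected. An equivalent viewpoint is that $\psi^{(r)}$ is $\ii^r$ times the $r$-th cumulant function of a sum of independent shifted geometric variables, and positivity of the coefficients gives the bound immediately.
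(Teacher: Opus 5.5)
Your proposal is correct and follows the paper's proof essentially line for line. You expand $-\log(1-x^ie^{\ii i\varphi})$ as $\sum_l x^{il}e^{\ii il\varphi}/l$, justify termwise differentiation by the Weierstrass M-test with dominant $i^r\Li_{1-r}(x^i)$, read off $\psi^{(r)}(0)=\ii^r\kappa_r$ and $|\psi^{(r)}|\le\kappa_r$ from the nonnegative coefficients, and obtain $\psi'(0)=0$ from $S_k(\theta)=m$. Your check that the principal logarithm agrees with the power series is a detail the paper leaves implicit, and you handle it correctly.
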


\begin{proof}
For $0<y<1$ and $a>0$, $\ell_{y,a}(\varphi):=-\log(1-ye^{\ii a\varphi})=\sum_{l\ge1}y^le^{\ii la\varphi}/l$, absolutely and uniformly.
The $r$-times differentiated series $(\ii a)^r\sum_ll^{r-1}y^le^{\ii la\varphi}$ is dominated by $a^r\Li_{1-r}(y)<\infty$, so by the
Weierstrass M-test termwise differentiation is valid and $|\ell_{y,a}^{(r)}|\le a^r\Li_{1-r}(y)$. Now
$\psi=-\ii m\varphi+\sum_i[\log(1-x^i)+\ell_{x^i,i}]$; for $r\ge2$ the linear part disappears, which gives the bound and the value at $0$.
Finally $\psi'(0)=\ii\bigl(-m+\sum_i ix^i/(1-x^i)\bigr)=0$ by the saddle-point equation.
\end{proof}

\begin{lemma}\label{lem:Hder}
$H(0)=0$, $H'(0)=\ii c_1$, $H^{(r)}(0)=\ii^rc_r$ for $r\ge2$; $|H'|\le c_1$ and $|H^{(r)}|\le c_r$ $(r\ge2)$ on $\RR$; and $\re H\le0$.
\end{lemma}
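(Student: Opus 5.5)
We expand $H$ as a power series, in the same way the proof of Lemma~\ref{lem:psider} expands $\psi$. Since $0<q<1$, for real $\varphi$ we have
\[
  -\log\bigl(1-qe^{\ii K\varphi}\bigr)=\sum_{l\ge1}\frac{q^l}{l}e^{\ii lK\varphi},
\]
and the series converges absolutely and uniformly. Hence
\[
  H(\varphi)=\ii\varphi+\log(1-q)+\sum_{l\ge1}\frac{q^l}{l}e^{\ii lK\varphi}.
\]
At $\varphi=0$ the series sums to $-\log(1-q)$, so $H(0)=0$.

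For $r\ge1$, the $r$-times differentiated series is $(\ii K)^r\sum_l l^{r-1}q^le^{\ii lK\varphi}$. It is dominated by $K^r\Li_{1-r}(q)<\infty$, so by the Weierstrass M-test termwise differentiation is valid. Therefore, for $r\ge2$,
\[
  H^{(r)}(\varphi)=(\ii K)^r\sum_{l\ge1}l^{r-1}q^le^{\ii lK\varphi}.
\]
Evaluating at $0$ gives $H^{(r)}(0)=\ii^rK^r\Li_{1-r}(q)=\ii^rc_r$. The triangle inequality gives $|H^{(r)}(\varphi)|\le c_r$ on $\RR$.

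For $r=1$ the linear term contributes as well:
\[
  H'(\varphi)=\ii+\ii K\sum_l q^le^{\ii lK\varphi}.
\]
So $H'(0)=\ii\bigl(1+Kq/(1-q)\bigr)=\ii c_1$, and $|H'(\varphi)|\le 1+K\sum_lq^l=c_1$.

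Finally, $\re H(\varphi)=\log(1-q)-\log\bigl|1-qe^{\ii K\varphi}\bigr|$, because $\re(\ii\varphi)=0$ and the logarithms are principal, with real part equal to the log of the modulus. The reverse triangle inequality gives $|1-qe^{\ii K\varphi}|\ge1-|qe^{\ii K\varphi}|=1-q>0$, so $\re H\le0$. This is equivalently the case $y=q$, $a=K\varphi$ of the identity used for~\eqref{eq:modchi}.

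No step here is a real obstacle. The only care needed is that principal logarithms agree with the series expansions, which holds because $\re(1-qe^{\ii K\varphi})>0$, as already noted in the proof of Proposition~\ref{prop:rep}, and that the $r=1$ derivative includes the extra constant from $\ii\varphi$.
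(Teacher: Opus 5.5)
Your proposal is correct and follows essentially the same route as the paper: write $H=\ii\varphi+\ell_{q,K}+\log(1-q)$, justify termwise differentiation by the M-test to get the values at $0$ and the bounds $|H'|\le c_1$, $|H^{(r)}|\le c_r$, and then use $\re H=\log(1-q)-\log|1-qe^{\ii K\varphi}|\le0$. The only cosmetic difference is that you obtain $|1-qe^{\ii K\varphi}|\ge1-q$ from the reverse triangle inequality, whereas the paper writes out $|1-qe^{\ii K\varphi}|^2-(1-q)^2=2q(1-\cos K\varphi)\ge0$.
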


\begin{proof}
$H=\ii\varphi+\ell_{q,K}+\log(1-q)$, so $H'=\ii+\ii K\sum_lq^le^{\ii lK\varphi}$ and $|H'|\le1+Kq/(1-q)=c_1$; for $r\ge2$,
$H^{(r)}=\ell_{q,K}^{(r)}$. Finally $|1-qe^{\ii K\varphi}|^2-(1-q)^2=2q(1-\cos K\varphi)\ge0$, so $|G|\le g_0$ and $\re H=\log|G/g_0|\le0$.
\end{proof}

\begin{corollary}\label{cor:omega}
Let $\omega:=\psi+B\varphi^2/2$. Then $\omega(0)=\omega'(0)=\omega''(0)=0$ and $|\omega'''|\le\kappa_3$; hence $|\omega(\varphi)|\le\kappa_3|\varphi|^3/6$.
Moreover $|\im\psi(\varphi)|\le\kappa_3|\varphi|^3/6$ and $\re\psi(\varphi)\le-B\varphi^2/2+\kappa_4\varphi^4/24$.
\end{corollary}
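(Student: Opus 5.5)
The corollary follows from Lemma~\ref{lem:psider} by Taylor expansion with integral remainder at $\varphi=0$. We treat $\omega$, $\im\psi$ and $\re\psi$ in turn.

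\textbf{The function $\omega$.} Since $\omega=\psi+B\varphi^2/2$ and $B=\kappa_2$, Lemma~\ref{lem:psider} gives $\omega(0)=\psi(0)=0$ and $\omega'(0)=\psi'(0)=0$. For the second derivative, $\omega''(0)=\psi''(0)+B=\ii^2\kappa_2+\kappa_2=0$. The added quadratic has vanishing third derivative, so $\omega'''=\psi'''$, and the lemma gives $|\omega'''|\le\kappa_3$ on all of $\RR$. Taylor's formula with integral remainder, applied to the complex-valued $\omega$, reads
\[
\omega(\varphi)=\int_0^\varphi\frac{(\varphi-t)^2}{2}\,\omega'''(t)\dd t .
\]
Bounding the integrand in modulus by $\kappa_3(\varphi-t)^2/2$ yields $|\omega(\varphi)|\le\kappa_3|\varphi|^3/6$.

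\textbf{The imaginary part.} Because $B\varphi^2/2$ is real, $\im\psi=\im\omega$. Hence $|\im\psi|\le|\omega|\le\kappa_3|\varphi|^3/6$ immediately.

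\textbf{The real part.} Here we expand $\re\psi$ to fourth order. This is a real function, so the Lagrange form of the remainder is available. Its derivatives at $0$ are the real parts of $\ii^r\kappa_r$:
\[
\re\psi(0)=0,\quad (\re\psi)'(0)=0,\quad (\re\psi)''(0)=-B,\quad (\re\psi)'''(0)=\re(-\ii\kappa_3)=0.
\]
The vanishing of the odd derivatives at $0$ is also consistent with $\re\psi$ being even. For the fourth derivative we have $|(\re\psi)^{(4)}|\le|\psi^{(4)}|\le\kappa_4$. Taylor's theorem then gives
\[
\re\psi(\varphi)=-\frac{B\varphi^2}{2}+\frac{(\re\psi)^{(4)}(\xi)}{24}\varphi^4
\]
for some $\xi$ between $0$ and $\varphi$. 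The remainder is at most $\kappa_4\varphi^4/24$, which proves the stated upper bound.

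No step is a real obstacle. The only points needing care are to use the integral form of the remainder for the complex function $\omega$, where the mean-value form fails, and to observe that the cubic term of $\re\psi$ vanishes.
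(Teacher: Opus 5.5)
Your proof is correct and follows the paper's route: Lemma~\ref{lem:psider} with Taylor's integral remainder for $\omega$, and a fourth-order expansion of $\re\psi$ (vanishing odd derivatives at $0$, $|(\re\psi)''''|\le\kappa_4$) for the real part. Two differences are cosmetic. You get the $\im\psi$ bound from $|\im\psi|=|\im\omega|\le|\omega|$, whereas the paper uses that $\im\psi$ is odd and vanishes to second order. You also apply the integral remainder directly to the complex-valued $\omega$ rather than to its real and imaginary parts separately.
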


\begin{proof}
The first statements follow from Lemma~\ref{lem:psider} and Taylor's theorem with integral remainder (applied to real and
imaginary parts). $\im\psi$ is odd and vanishes to second order at $0$ because $\psi'(0)=0$ and $\psi''(0)=-B$ is real.
$\re\psi$ is even, with $(\re\psi)''(0)=-B$, vanishing odd derivatives at $0$, and $|(\re\psi)''''|\le\kappa_4$.
\end{proof}

We shall also use two elementary facts repeatedly.
\begin{enumerate}[label=(F\arabic*)]
\item\label{F1} If $\re z\le0$ then $|e^z-1|\le|z|$ and $|e^z-1-z-z^2/2|\le|z|^3/6$; more generally
  $|e^z-1|\le|z|\max(1,e^{\re z})$. (From $e^z-1=z\int_0^1e^{sz}\dd s$ and $e^z-1-z-z^2/2=z^3\int_0^1\frac{(1-s)^2}{2}e^{sz}\dd s$.)
\item\label{F2} For $0\le A$ and $0\le t\le1$, $\log(1+At)\ge t\log(1+A)$ (concavity).
\end{enumerate}

%======================================================================
\section{The main term and the error bound in the window}\label{sec:main}
%======================================================================

Throughout this section $2\le k\le n-2$ and $\theta$ is the saddle point. We fix the central arc
\[
  |\varphi|\le\varphi_0:=0.4\,\theta .
\]

\subsection{The Gaussian model}
Put $g(\varphi):=e^{-B\varphi^2/2}$, and let
\[
\begin{gathered}
  \omega_3=-\ii\kappa_3\varphi^3/6,\qquad \omega_4=\kappa_4\varphi^4/24,\qquad \omega_5=\ii\kappa_5\varphi^5/120,\\
  H_1=\ii c_1\varphi,\qquad H_2=-c_2\varphi^2/2,\qquad H_3=-\ii c_3\varphi^3/6,\qquad H_4=c_4\varphi^4/24,
\end{gathered}
\]
be the Taylor terms of $\omega=\psi+B\varphi^2/2$ and of $H$ at $0$ (Lemmas~\ref{lem:psider} and~\ref{lem:Hder}). Define the
polynomials
\[
  Q:=1+\omega_3+\omega_4+\omega_5+\tfrac12\omega_3^2+\omega_3\omega_4+\tfrac16\omega_3^3,\qquad
  P_H:=1+H_1+H_2+H_3+H_4+\tfrac12H_1^2,
\]
and the model integrals
\[
  D_0:=\int_\RR g\,Q\dd\varphi,\qquad \tilde N_0:=\int_\RR g\,Q\,(P_H-1)\dd\varphi,\qquad M_{\mathrm{model}}:=1+\tilde N_0/D_0 .
\]
They are evaluated exactly with the Gaussian moments $\int_\RR\varphi^{2j}e^{-A\varphi^2/2}\dd\varphi=\sqrt{2\pi/A}\,(2j-1)!!\,A^{-j}$.
In particular $D_0=\sqrt{2\pi/B}\,N_0$ with $N_0$ as in~\eqref{eq:M2}, and $\tilde N_0/D_0$ is real. We work with the scaled variables
\begin{equation}\label{eq:AC}
  A_r:=\frac{\kappa_r}{B^{r/2}}=\frac{s_r\,\theta^{r/2-1}}{b^{r/2}},\qquad C_r:=\frac{c_r}{B^{r/2}}=\frac{f_r(\nu)\,\theta^{r/2}}{b^{r/2}}\ \ (r\ge2),\qquad
  C_1:=\frac{c_1}{B^{1/2}},
\end{equation}
and $w_0:=\varphi_0\sqrt B=0.4\sqrt{b/\theta}$; every quantity below is invariant under $\kappa_r\mapsto\lambda^r\kappa_r$,
$c_r\mapsto\lambda^rc_r$, $B\mapsto\lambda^2B$, so it may be computed with $B=1$ and $(\kappa_r,c_r)$ replaced by $(A_r,C_r)$.

\begin{lemma}\label{lem:gap}
$M_{\mathrm{model}}-M_2=\mathrm{Num}/N_0$, where, in the scaled variables,
\[
\begin{aligned}
\mathrm{Num}={}&-\tfrac{25}{48}C_4A_3^2+\tfrac16C_4A_4+(C_2+C_1^2)\Bigl(\tfrac{1}{32}A_4^2+\tfrac{25}{192}A_3^4-\tfrac{25}{192}A_3^2A_4\Bigr)\\
&+\tfrac54C_3A_3^3+\tfrac{7}{48}C_3A_5-\tfrac{25}{24}C_3A_3A_4 .
\end{aligned}
\]
\end{lemma}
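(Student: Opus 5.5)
The plan is a direct, exact computation in the scaled variables. Both sides are homogeneous under $\kappa_r\mapsto\lambda^r\kappa_r$, $c_r\mapsto\lambda^rc_r$, $B\mapsto\lambda^2B$, as noted after~\eqref{eq:AC}. So I set $B=1$ and replace $(\kappa_r,c_r)$ by $(A_r,C_r)$; then $g(w)=e^{-w^2/2}$. I write $\langle w^{2j}\rangle:=(2j-1)!!$ for the normalised Gaussian moments, so that $\int_\RR w^{2j}g=\sqrt{2\pi}\,(2j-1)!!$. Odd moments vanish, and this is what makes $\tilde N_0/D_0$ real.

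\emph{Step 1: the denominator.} I expand
\[
Q=1-\tfrac{\ii}{6}A_3w^3+\tfrac1{24}A_4w^4+\tfrac{\ii}{120}A_5w^5-\tfrac1{72}A_3^2w^6-\tfrac{\ii}{144}A_3A_4w^7+\tfrac{\ii}{1296}A_3^3w^9 .
\]
Only the even-degree terms survive integration. This gives $D_0/\sqrt{2\pi}=1+\tfrac{3}{24}A_4-\tfrac{15}{72}A_3^2=N_0$, which confirms $D_0=\sqrt{2\pi/B}\,N_0$.

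\emph{Step 2: the numerator.} I expand
\[
P_H-1=\ii C_1w-\tfrac12(C_2+C_1^2)w^2-\tfrac{\ii}6C_3w^3+\tfrac1{24}C_4w^4 .
\]
I multiply by $Q$ and keep only the even-degree monomials. Grouping by the $C$-coefficient, $\tilde N_0/\sqrt{2\pi}$ is a sum of four contributions.
\begin{itemize}
\item The $C_1$ contribution pairs $\ii C_1 w$ with the odd (imaginary) part of $Q$. It gives $C_1\bigl(\tfrac{3}{6}A_3-\tfrac{15}{120}A_5+\tfrac{105}{144}A_3A_4-\tfrac{945}{1296}A_3^3\bigr)$, which is exactly $C_1\cdot N_0E_1$ in scaled form.
\item The $(C_2+C_1^2)$ contribution pairs $w^2$ with the even part of $Q$. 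It gives $-\tfrac12(C_2+C_1^2)\bigl(1+\tfrac{15}{24}A_4-\tfrac{105}{72}A_3^2\bigr)$.
\item The $C_3$ contribution pairs $w^3$ with the odd part of $Q$.
\item The $C_4$ contribution pairs $w^4$ with the even part of $Q$.
\end{itemize}
Then $M_{\mathrm{model}}=1+\tilde N_0/(\sqrt{2\pi}N_0)$.

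\emph{Step 3: the comparison.} I write $M_2=1+\bigl[\,C_1N_0E_1-\tfrac12(C_2+C_1^2)N_0E_2-\tfrac5{12}C_3A_3N_0+\tfrac18C_4N_0\,\bigr]/N_0$ and subtract it from $M_{\mathrm{model}}$ term by term in the numerator.
\begin{itemize}
\item The $C_1$ terms cancel identically.
\item For the $(C_2+C_1^2)$ terms, the product
\[
N_0E_2=\bigl(1+\tfrac18A_4-\tfrac5{24}A_3^2\bigr)\bigl(1+\tfrac12A_4-\tfrac54A_3^2\bigr)
\]
has linear part $1+\tfrac58A_4-\tfrac{35}{24}A_3^2$. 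This agrees with the model, since $\tfrac{15}{24}=\tfrac58$ and $\tfrac{105}{72}=\tfrac{35}{24}$. The difference therefore comes only from the quadratic cross terms $\tfrac1{16}A_4^2+\tfrac{25}{96}A_3^4-\bigl(\tfrac{5}{32}+\tfrac{5}{48}\bigr)A_3^2A_4$. After the factor $-\tfrac12$ and the sign of the subtraction, these produce $(C_2+C_1^2)\bigl(\tfrac1{32}A_4^2+\tfrac{25}{192}A_3^4-\tfrac{25}{192}A_3^2A_4\bigr)$, as claimed.
\item Similarly, the $C_3$ and $C_4$ model contributions have constant and linear parts matching $-\tfrac5{12}C_3A_3$ and $\tfrac18C_4$ times $N_0$. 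Their discrepancies are the $C_3$ and $C_4$ terms of $\mathrm{Num}$. For example, the $C_4$ term is $\tfrac1{24}C_4(3+\tfrac{105}{24}A_4-\tfrac{945}{72}A_3^2)-\tfrac18C_4N_0$.
\end{itemize}

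The only real obstacle is bookkeeping: the moments $(2j-1)!!$ up to $j=6$, and the signs of the powers of $\ii$ when multiplying imaginary odd terms. I would organise the calculation as a table of (coefficient in $Q$) $\times$ (coefficient in $P_H-1$) $\times$ moment, and cross-check it with a symbolic-algebra computation. No analytic estimate is involved.
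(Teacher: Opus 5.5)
Your proposal is correct and is the paper's own route: the paper also expands $\tilde N_0$ and $D_0$ in Gaussian moments and checks the resulting polynomial identity in $A_3,\dots,A_5,C_1,\dots,C_4$ symbolically. Your blocks check out, including the $C_3$ block you did not write out, which is $C_3\bigl(-\tfrac5{12}A_3+\tfrac7{48}A_5-\tfrac{35}{32}A_3A_4+\tfrac{385}{288}A_3^3\bigr)$.

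One wording slip: for $C_3$ and $C_4$ only the leading terms $-\tfrac5{12}C_3A_3$ and $\tfrac18C_4$ agree with $M_2$. For example, the $C_4$ block is $C_4\bigl(\tfrac18+\tfrac{35}{192}A_4-\tfrac{35}{64}A_3^2\bigr)$, against $C_4\bigl(\tfrac18+\tfrac1{64}A_4-\tfrac5{192}A_3^2\bigr)$ from $M_2$. So, unlike the $(C_2+C_1^2)$ block, the first-order corrections do not match, and they make up the whole discrepancy. Your displayed formula for the $C_4$ term is nevertheless correct.
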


\begin{proof}
A polynomial identity in the variables $A_3,\dots,A_5,C_1,\dots,C_4$, obtained by expanding $\tilde N_0$ and $D_0$ in Gaussian
moments. It is verified symbolically (Table~\ref{tab:certs}, item~S2).
\end{proof}

On the window, $f_r(\nu)=O(q)=O(\theta)$, so by~\eqref{eq:AC} $A_r=O(\theta^{r/2-1})$, $C_r=O(\theta^{1+r/2})$ $(r\ge2)$ and
$C_1=O(\theta^{3/2})$. Hence every monomial of $\mathrm{Num}$ is $O(\theta^4)$, whereas $M_2-1=O(\theta^2)$ is of the same order as the
step $\theta q$ of $L_2$ (Theorem~\ref{thm:B}); this is why a second-order main term is needed.

\subsection{The pointwise error bound}
Let $a_r:=\kappa_r/r!$ and $t=|\varphi|$, and set
\[
  \delta_0:=\frac{\kappa_4\varphi_0^2/12+\kappa_6\varphi_0^4/360}{B},\qquad B':=(1-\delta_0)B .
\]
Define the nonnegative polynomials in $t$
\[
\begin{aligned}
T_\omega(t)&:=a_6t^6+\bigl(\tfrac12a_4^2+a_3a_5\bigr)t^8+a_4a_5t^9+\tfrac12a_5^2t^{10}\\
  &\qquad+\tfrac12(a_4t^4+a_5t^5)(a_3t^3+a_4t^4+a_5t^5)^2+\tfrac1{24}(a_3t^3+a_4t^4+a_5t^5)^4,\\
T_H(t)&:=\tfrac{1}{120}c_5t^5+\bigl(\tfrac12c_1c_2+\tfrac16c_1^3\bigr)t^3,
\end{aligned}
\]
and let $Q_{\mathrm{abs}}(t)$ and $P_{\mathrm{abs}}(t)$ be the polynomials obtained from $Q$ and $P_H-1$ by replacing each coefficient by
its modulus.

\begin{lemma}[Central arc]\label{lem:central}
Suppose $\delta_0<1$. For $|\varphi|\le\varphi_0$,
\[
  |e^{\omega}-Q|\le T_\omega(t)\,e^{\delta_0Bt^2/2},\qquad |e^H-P_H|\le T_H(t),\qquad |e^H-1|\le c_1t,
\]
and consequently
\[
\begin{aligned}
  \Bigl|\int_{|\varphi|\le\varphi_0}\bigl[\chi(e^H-1)-gQ(P_H-1)\bigr]\Bigr|&\le\int_\RR e^{-B't^2/2}c_1t\,T_\omega(t)\dd\varphi+\int_\RR g\,Q_{\mathrm{abs}}T_H\dd\varphi,\\
  \Bigl|\int_{|\varphi|\le\varphi_0}(\chi-gQ)\Bigr|&\le\int_\RR e^{-B't^2/2}\,T_\omega(t)\dd\varphi .
\end{aligned}
\]
\end{lemma}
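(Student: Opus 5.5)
The plan is to work on $|\varphi|\le\varphi_0$ and split $\omega$ and $H$ into their Taylor polynomials and remainders, using Lemma~\ref{lem:psider}, Lemma~\ref{lem:Hder} and \ref{F1}. Write $u:=\omega_3+\omega_4+\omega_5$ and $\rho:=\omega-u$. Since $\omega(0)=\omega'(0)=\omega''(0)=0$ and $|\omega^{(6)}|=|\psi^{(6)}|\le\kappa_6$, Taylor's theorem with integral remainder gives $|\rho|\le a_6t^6$.

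\textbf{Bounding the real parts.} I first control the real parts that will appear in the exponential factors. Because $\re\psi$ is even, Taylor expansion to sixth order gives $\re\omega\le\kappa_4t^4/24+\kappa_6t^6/720$. For $t\le\varphi_0$ this is at most $(\kappa_4\varphi_0^2/24+\kappa_6\varphi_0^4/720)t^2=\delta_0Bt^2/2$. Trivially $\re u=\kappa_4t^4/24\le\delta_0Bt^2/2$ as well. In particular $|\chi|=g\,e^{\re\omega}\le e^{-B't^2/2}$ on the arc, and this is where $\delta_0<1$ is used.

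\textbf{The bound for $e^\omega-Q$.} I split
\[
e^\omega-Q=e^{u}(e^{\rho}-1)+(e^{u}-Q).
\]
For the first piece, \ref{F1} gives $|e^{\rho}-1|\le|\rho|\max(1,e^{\re\rho})$. Multiplying by $e^{\re u}$, the exponential factor is at most $e^{\max(\re u,\re\omega)}\le e^{\delta_0Bt^2/2}$, so this piece contributes $a_6t^6e^{\delta_0Bt^2/2}$.

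For the second piece I expand $e^u=1+u+u^2/2+u^3/6+R_4$, where $|R_4|\le|u|^4e^{\max(0,\re u)}/24$ by the integral form of the remainder. Subtracting $Q$ leaves three groups of terms:
\begin{itemize}
\item the part of $u^2/2$ not in $Q$, namely $\tfrac12\omega_4^2+\omega_3\omega_5+\omega_4\omega_5+\tfrac12\omega_5^2$;
\item the part of $u^3/6$ not in $Q$, namely $u^3/6-\omega_3^3/6$, which I bound by $\tfrac12|\omega_4+\omega_5|\,|u|^2$ (using $|a^3-b^3|\le|a-b|\cdot 3\max(|a|,|b|)^2$ with $|\omega_3|\le|u|$ in absolute-value majorants);
\item the remainder $R_4$.
\end{itemize}
Replacing each $\omega_j$ by its modulus $a_jt^j$ produces exactly the terms of $T_\omega$. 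Each of these carries a factor $\le e^{\delta_0Bt^2/2}$, either trivially or from the remainder.

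\textbf{The bounds for $H$.} Here $\re H\le0$ by Lemma~\ref{lem:Hder}. Write $H=H_1+\dots+H_4+\rho_H$ with $|\rho_H|\le c_5t^5/120$, and decompose
\[
e^H-P_H=\rho_H+\bigl(e^H-1-H-\tfrac12H^2\bigr)+\tfrac12(H-H_1)(H+H_1).
\]
\ref{F1} bounds the middle term by $|H|^3/6\le c_1^3t^3/6$. Taylor's theorem gives $|H-H_1|\le c_2t^2/2$, and $|H+H_1|\le2c_1t$. Together these give $T_H$, and $|e^H-1|\le|H|\le c_1t$ is immediate from \ref{F1}.

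\textbf{The integrated bounds.} Using $\chi=g\,e^{\omega}$, write
\[
\chi(e^H-1)-gQ(P_H-1)=g(e^\omega-Q)(e^H-1)+gQ(e^H-P_H).
\]
The first term is bounded pointwise by $g\,e^{\delta_0Bt^2/2}T_\omega\,c_1t=e^{-B't^2/2}c_1tT_\omega$. The second is bounded by $gQ_{\mathrm{abs}}T_H$, since $|Q|\le Q_{\mathrm{abs}}(t)$. All bounds are nonnegative, so I integrate over the arc and then enlarge the domain to $\RR$. The denominator estimate is the same argument with $e^H-1$ omitted.

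The only delicate step is the bookkeeping in $e^u-Q$: one must check that every leftover monomial, including the crude bound on $u^3/6-\omega_3^3/6$, is dominated term by term by $T_\omega$ with the stated coefficients. One must also make sure the exponential factor is never worse than $e^{\delta_0Bt^2/2}$, which holds because $\re u$ and $\re\omega$ were both bounded above.
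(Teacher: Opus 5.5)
Your proof is correct and follows the paper's argument step for step. You use the same three-way split of $e^\omega-Q$ (your $e^u(e^\rho-1)$ is the paper's $e^\omega-e^{\hat\omega}$), the same decomposition of $e^H-P_H$, and the same identity $\chi(e^H-1)-gQ(P_H-1)=g\bigl[(e^\omega-Q)(e^H-1)+Q(e^H-P_H)\bigr]$. Your explicit bound $\tfrac12(a_4t^4+a_5t^5)(a_3t^3+a_4t^4+a_5t^5)^2$ for the cubic difference simply spells out the paper's ``dominated termwise'' remark. One small correction: $\delta_0<1$ is not needed for the pointwise bound $|\chi|\le e^{-B't^2/2}$; it is only needed so that $B'>0$ and the integrals over $\mathbb{R}$ are finite.
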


\begin{proof}
By Taylor's theorem with $|\psi^{(6)}|\le\kappa_6$ and since the odd-order terms are purely imaginary,
$\re\omega\le\kappa_4t^4/24+\kappa_6t^6/720\le\delta_0Bt^2/2$ for $t\le\varphi_0$. Let $\hat\omega=\omega_3+\omega_4+\omega_5$; then
$|\omega-\hat\omega|\le a_6t^6$ and $\re\hat\omega=\kappa_4t^4/24\le\delta_0Bt^2/2$. We split
\[
  e^\omega-Q=(e^\omega-e^{\hat\omega})+\Bigl(e^{\hat\omega}-\sum_{j\le3}\tfrac{\hat\omega^j}{j!}\Bigr)+\Bigl(\sum_{j\le3}\tfrac{\hat\omega^j}{j!}-Q\Bigr).
\]
The first term is at most $|\omega-\hat\omega|e^{\max(\re\omega,\re\hat\omega)}$, the second is at most
$\frac{|\hat\omega|^4}{24}\max(1,e^{\re\hat\omega})$ (Taylor's theorem with integral remainder), and the third is the polynomial
$\frac12(\omega_4^2+\omega_5^2)+\omega_3\omega_5+\omega_4\omega_5+\frac16(\hat\omega^3-\omega_3^3)$, whose modulus is dominated termwise by
the remaining terms of $T_\omega$. For $T_H$, write
$e^H-P_H=(e^H-1-H-\frac12H^2)+(H-\sum_{r\le4}H_r)+\frac12(H-H_1)(H+H_1)$. By Lemma~\ref{lem:Hder}, $|H|\le c_1t$, so
\ref{F1} bounds the first piece by $c_1^3t^3/6$; Taylor's theorem with $|H^{(5)}|\le c_5$ bounds the second by $c_5t^5/120$; and
$|H-H_1|\le c_2t^2/2$, $|H+H_1|\le2c_1t$ bound the third by $c_1c_2t^3/2$. The bound $|e^H-1|\le c_1t$ is~\ref{F1}. The integral
inequalities follow from the identity
$\chi(e^H-1)-gQ(P_H-1)=g\bigl[(e^\omega-Q)(e^H-1)+Q(e^H-P_H)\bigr]$ and from $g\,e^{\delta_0Bt^2/2}=e^{-B't^2/2}$.
\end{proof}

The \emph{model tails} are bounded by $e^{-B\varphi^2/2}\le e^{-B\varphi_0^2/4}e^{-B\varphi^2/4}$ for $|\varphi|\ge\varphi_0$:
\begin{equation}\label{eq:tails}
\begin{aligned}
  \int_{|\varphi|>\varphi_0}g\,|Q(P_H-1)|&\le e^{-B\varphi_0^2/4}\int_\RR e^{-B\varphi^2/4}Q_{\mathrm{abs}}P_{\mathrm{abs}},\\
  \int_{|\varphi|>\varphi_0}g\,|Q|&\le e^{-B\varphi_0^2/4}\int_\RR e^{-B\varphi^2/4}Q_{\mathrm{abs}} .
\end{aligned}
\end{equation}
On the \emph{minor arcs} $\varphi_0\le|\varphi|\le\pi$, since $|e^H-1|\le2$ by $\re H\le0$ and the arcs have total length at most $2\pi$,
\begin{equation}\label{eq:minor}
  \Bigl|\int_{\varphi_0\le|\varphi|\le\pi}\chi(e^H-1)\Bigr|\le4\pi M_\chi,\qquad \Bigl|\int_{\varphi_0\le|\varphi|\le\pi}\chi\Bigr|\le2\pi M_\chi,
\end{equation}
where $M_\chi:=\sup_{\varphi_0\le|\varphi|\le\pi}|\chi(\varphi)|$.

\begin{proposition}[Pointwise error bound]\label{prop:eps}
Let $E_N$ (resp.\ $E_D$) be the sum of the numerator (resp.\ denominator) bounds of Lemma~\ref{lem:central},~\eqref{eq:tails}
and~\eqref{eq:minor}, all divided by $\sqrt{2\pi/B}$ (recall $D_0/\sqrt{2\pi/B}=N_0$). Suppose $\delta_0<1$, $N_0-E_D>0$ and
\[
  \Delta:=\frac{E_N+|\tilde N_0/D_0|\,E_D}{N_0-E_D}+\frac{|\mathrm{Num}|}{N_0}<M_2 .
\]
Then
\begin{equation}\label{eq:eps}
  |L_k-L_{2,k}|\le\varepsilon_k:=-\log\bigl(1-\Delta/M_2\bigr).
\end{equation}
\end{proposition}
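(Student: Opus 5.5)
The plan is to start from the exact identity \eqref{eq:rep2}, $R_k/g_0-1=\tilde N/D$, and compare $\tilde N$ and $D$ with the model integrals $\tilde N_0$ and $D_0$. Write $\tilde N=\tilde N_0+\eta_N$ and $D=D_0+\eta_D$. Split each of $\tilde N$, $D$ into the central arc $|\varphi|\le\varphi_0$ and the minor arcs $\varphi_0\le|\varphi|\le\pi$. Split each of $\tilde N_0$, $D_0$ into the same central arc and the model tails $|\varphi|>\varphi_0$. Then $\eta_N$ is at most the central-arc difference bounded in Lemma~\ref{lem:central}, plus the minor-arc integral of $\chi(e^H-1)$ bounded in \eqref{eq:minor}, plus the model tail of $gQ(P_H-1)$ bounded in \eqref{eq:tails}. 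The same decomposition applies to $\eta_D$. After dividing by $\sqrt{2\pi/B}$ this gives $|\eta_N|\le\sqrt{2\pi/B}\,E_N$ and $|\eta_D|\le\sqrt{2\pi/B}\,E_D$. The hypothesis $\delta_0<1$ is what makes Lemma~\ref{lem:central} applicable.

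Next comes the ratio comparison. Normalise so that $D_0\mapsto N_0$, and write $\rho:=\tilde N_0/D_0$, which is real. Then
\[
\frac{\tilde N}{D}-\rho=\frac{\eta_N-\rho\,\eta_D}{D}.
\]
Since $|D|\ge D_0-|\eta_D|$, whose normalised value is at least $N_0-E_D>0$, we get
\[
\Bigl|\frac{\tilde N}{D}-\rho\Bigr|\le\frac{E_N+|\rho|E_D}{N_0-E_D}.
\]
The quantity $\tilde N/D$ is real because $R_k/g_0-1$ is real, although $D$ is in fact real and positive anyway. Since $1+\rho=M_{\mathrm{model}}$, Lemma~\ref{lem:gap} gives $|M_{\mathrm{model}}-M_2|=|\mathrm{Num}|/N_0$. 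The triangle inequality then yields $|R_k/g_0-M_2|\le\Delta$.

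Finally, pass to logarithms. Note that $M_2>\Delta\ge0$, so $M_2>0$. Hence $R_k/(g_0M_2)$ lies in $[1-\Delta/M_2,\,1+\Delta/M_2]$, and the lower endpoint is positive. By \eqref{eq:L2}, $L_k-L_{2,k}=\log\bigl(R_k/(g_0M_2)\bigr)$. Using $\log(1+u)\le u\le-\log(1-u)$ for the upper side and monotonicity of $\log$ for the lower side, we obtain $|L_k-L_{2,k}|\le-\log(1-\Delta/M_2)$, which is \eqref{eq:eps}.

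The only delicate point is bookkeeping: checking that each piece of $\eta_N$ and $\eta_D$ matches exactly one of the listed bounds, with no arc counted twice and no arc omitted. In particular the minor arcs must be compared against the true integrand $\chi$, while the model tails are compared against the Gaussian integrand. Everything else is the elementary ratio perturbation above.
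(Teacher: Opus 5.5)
Your proposal is correct and follows the paper's proof essentially step for step. You use the same decomposition $\tilde N=\tilde N_0+e_N$, $D=D_0+e_D$ bounded by the three groups of estimates, the same perturbation identity $\tilde N/D-\tilde N_0/D_0=(e_N-(\tilde N_0/D_0)e_D)/D$, Lemma~\ref{lem:gap} to pass from $M_{\mathrm{model}}$ to $M_2$, and the same final logarithm inequality. You also make explicit the small points the paper leaves implicit, namely that $N_0>0$ follows from $N_0-E_D>0$ and that $M_2>\Delta\ge0$.
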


\begin{proof}
Write $\tilde N=\tilde N_0+e_N$ and $D=D_0+e_D$. The three groups of estimates give $|e_N|\le\sqrt{2\pi/B}\,E_N$ and
$|e_D|\le\sqrt{2\pi/B}\,E_D$. Since
$\tilde N/D-\tilde N_0/D_0=(e_N-(\tilde N_0/D_0)e_D)/(D_0+e_D)$, Proposition~\ref{prop:rep} gives
$|R_k/g_0-M_{\mathrm{model}}|\le\Delta-|\mathrm{Num}|/N_0$, and Lemma~\ref{lem:gap} gives $|R_k/(g_0M_2)-1|\le\Delta/M_2<1$. Finally
$|\log(1+y)|\le-\log(1-|y|)$ for $|y|<1$, and $L_k-L_{2,k}=\log(R_k/(g_0M_2))$.
\end{proof}

All the integrals in $E_N$ and $E_D$ are integrals of nonnegative polynomials against Gaussians over $\RR$, which we evaluate exactly
via $\int_\RR|\varphi|^pe^{-A\varphi^2/2}\dd\varphi=(2/A)^{(p+1)/2}\Gamma(\frac{p+1}2)$. The full expression for $\varepsilon_k$, written
in the scaled variables~\eqref{eq:AC}, is recorded in Appendix~\ref{app:eps}. It is a fixed algebraic expression in
$(\theta,\nu',b,s_3,\dots,s_6)$ and in an upper bound for $M_\chi$; all of its algebra (the moments, the polynomial
identities above, and the scaling invariance) is checked symbolically (Table~\ref{tab:certs}, S2).

\subsection{The minor arcs}
The only non-local input is a bound for $M_\chi$. The following lemma gives it uniformly in $\theta$; its proof uses only the
partial sums of $\sin^2$, and no counting arguments.

\begin{lemma}[Minor arcs]\label{lem:minor}
Let $\theta_{\max}>0$, $u_{\mathrm{top}}>0$ and $c_0>0$, and suppose $\theta\le\theta_{\max}$ and $(k+1)\theta\ge u_{\mathrm{top}}$.
Let $c_a\ge c_0$ with $c_a\theta_{\max}/2\le\pi/2$, and put $L_\ast:=2\theta_{\max}+2/\bigl(c_a\sinc(c_a\theta_{\max}/2)\bigr)$.
\begin{enumerate}[label=(\alph*)]
\item If $\varphi/\theta\in[c_{\mathrm{lo}},c_{\mathrm{hi}}]\subset[c_0,c_a]$, then for every $T\in(0,\pi/2]$, with $\sigma=\sin T/T$,
\[
  \theta S(\varphi)\ \ge\ \int_{\theta_{\max}}^{\min(u_{\mathrm{top}},\,2T/c_{\mathrm{hi}})}\log\bigl(1+\sigma^2c_{\mathrm{lo}}^2f_2(u)\bigr)\dd u .
\]
\item If $c_a\theta\le\varphi\le\pi$, then $\theta S(\varphi)\ge\int_{L_\ast}^{u_{\mathrm{top}}}\log\coth(u/2)\dd u$ (to be read as $0$ if $L_\ast\ge u_{\mathrm{top}}$).
\end{enumerate}
Consequently $\log|\chi(\varphi)|\le-\Psi/(2\theta)$ for $c_0\theta\le|\varphi|\le\pi$, where $\Psi$ is the minimum of the right-hand
sides over a cover of $[c_0,c_a]$ by cells and of (b).
\end{lemma}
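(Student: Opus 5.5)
The starting point is the identity \eqref{eq:modchi}: $\theta S(\varphi)=\theta\sum_{i\le k}\log\bigl(1+\sin^2(i\varphi/2)/\sinh^2(i\theta/2)\bigr)$, whose summands are all nonnegative. Put $u_i=i\theta$, so the points $u_i$ have spacing $\theta$. The plan is to treat $\theta S$ as a Riemann sum in $u$. We discard the terms with $u_i<\theta_{\max}$ and the terms beyond the relevant upper limit, and bound each remaining summand below by a function of $u$ that is monotone on the range of integration. Note that $k\theta=(k+1)\theta-\theta\ge u_{\mathrm{top}}-\theta$, so the indices with $u_i\in[\theta_{\max},u_{\mathrm{top}}]$ really do occur. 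The upper limit is consistent up to one grid step, and that step is handled as described below.

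\textbf{Part (a).} Write $\varphi=c\theta$ with $c\in[c_{\mathrm{lo}},c_{\mathrm{hi}}]$. Then $i\varphi/2=cu_i/2$. Whenever $u_i\le 2T/c_{\mathrm{hi}}$ we have $cu_i/2\le T\le\pi/2$, and concavity of $\sin$ gives $\sin(cu_i/2)\ge\sigma cu_i/2\ge\sigma c_{\mathrm{lo}}u_i/2$. Since $(u/2)^2/\sinh^2(u/2)=f_2(u)$, the summand is at least $\log\bigl(1+\sigma^2c_{\mathrm{lo}}^2f_2(u_i)\bigr)$. This function of $u$ is decreasing because $f_2$ is decreasing. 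For a decreasing function $h\ge0$ we have $\theta h(u_i)\ge\int_{u_i}^{u_i+\theta}h$. Summing over $u_i\in[\theta_{\max}-\theta,\,U)$ with $U=\min(u_{\mathrm{top}},2T/c_{\mathrm{hi}})$, only indices with $u_i\ge\theta_{\max}$ would be safe, so I would instead use $\theta h(u_i)\ge\int_{u_i-\theta}^{u_i}$ applied from the left. The resolution is the following. Use the left-endpoint comparison for the decreasing integrand, $\theta h(u_i)\ge\int_{u_i}^{u_i+\theta}h$, over the indices with $u_i\in[\theta_{\max},U-\theta]$ (the cells $[u_i,u_i+\theta]$ then lie inside the admissible range). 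The first cell missing near $\theta_{\max}$ is covered because the smallest $u_i\ge\theta_{\max}$ exceeds $\theta_{\max}$ by less than $\theta\le\theta_{\max}$. Here I expect to need $\int_{\theta_{\max}}$ rather than $\int_0$: the integral starts at $\theta_{\max}$, and dropping a prefix of length $\theta$ is absorbed by starting the integration at $\theta_{\max}\ge\theta$ and shifting the comparison cells to $[u_i-\theta,u_i]$ where the summand is evaluated at a right endpoint. That gives the wrong direction for a decreasing $h$, so the bookkeeping has to be arranged carefully so that the cells go upward. This index bookkeeping is the main obstacle of (a), even though it is routine.

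\textbf{Part (b).} Here $\varphi\ge c_a\theta$, and I would use a counting-free averaging in place of the monotone comparison. Use $\log(1+A\sin^2)\ge\sin^2\cdot\log(1+A)$ by \ref{F2}, together with $1+1/\sinh^2(u/2)=\coth^2(u/2)$. This gives summand $\ge 2\sin^2(i\varphi/2)\log\coth(u_i/2)$. Writing $2\sin^2=1-\cos(i\varphi)$ leaves the main term $\theta\sum\log\coth(u_i/2)$ minus an oscillatory sum. The oscillatory sum is controlled by Abel summation against the decreasing weight $\log\coth(u_i/2)$, using the partial-sum bound $\bigl|\sum_{i\le j}\cos(i\varphi)\bigr|\le 1/|\sin(\varphi/2)|$. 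For $\varphi\in[c_a\theta,\pi]$ we get $\sin(\varphi/2)\ge\min\bigl(\sin(c_a\theta/2),1\bigr)\ge c_a\theta\,\sinc(c_a\theta_{\max}/2)/2$, so the Abel error is at most $\theta\cdot\frac{2}{c_a\theta\sinc(\cdot)}\cdot\log\coth(u_{\min}/2)$ at the weight's maximum. Equivalently, the error equals the weight integrated over an initial $u$-interval of length $2/(c_a\sinc)$. Discarding the terms with $u_i<2\theta_{\max}$ (costing a length $2\theta_{\max}$), bounding the Abel error by the mass of an interval of length $2/(c_a\sinc)$, and converting the remaining sum to an integral by monotonicity yields $\int_{L_\ast}^{u_{\mathrm{top}}}\log\coth(u/2)\,du$.

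The consequence then follows from $\log|\chi|=-S/2$ by \eqref{eq:modchi}, together with evenness in $\varphi$ and the union over the cells of (a) and region (b).
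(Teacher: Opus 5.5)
\textbf{Part (b)} has a genuine gap. Your Abel step bounds the oscillatory sum by $\max_j|C_j|$, with $C_j:=\sum_{i\le j}\cos(i\varphi)$, times the largest retained weight. This gives $\theta\bigl|\sum_i\cos(i\varphi)\log\coth(u_i/2)\bigr|\le L\log\coth(u_{\min}/2)$ with $L=2/\bigl(c_a\sinc(c_a\theta_{\max}/2)\bigr)$.

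That quantity is \emph{not} the mass of the weight over an interval of length $L$. The weight is decreasing, so length times its value at the left end is at least, not at most, $\int_{u_{\min}}^{u_{\min}+L}\log\coth(u/2)\dd u$. Your ``equivalently'' therefore fails in the direction you need, and the loss is large. With no terms discarded it is about $L\log(2/\theta)$, unbounded as $\theta\to0$, while the main term is at most $\int_0^\infty\log\coth(u/2)\dd u=\pi^2/4$. Discarding $u_i<2\theta_{\max}$ first still allows a loss of $L\log\coth\theta_{\max}\approx8$ for the parameters of Corollary~\ref{cor:cstar}. So you get a weaker bound than the one claimed, and in the application it is negative.

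The missing idea is to keep the positivity of $\sin^2$. Do not split off a main term. Instead Abel-sum $\sum_{i\le k}\sin^2(id)w_i=\sum_{I\le k}G_I(w_I-w_{I+1})$ directly, with $d=\varphi/2$ and $w_{k+1}=0$. Then use the one-sided bound $G_I\ge\frac12(I-N_d)_+$, $N_d=\lceil1/\sin d\rceil$, which combines the Dirichlet bound $G_I\ge I/2-1/(2\sin d)$ with the trivial $G_I\ge0$. In your notation this means using $C_j\le\min(j,N_d)$ rather than only $|C_j|\le1/\sin(\varphi/2)$.

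Telescoping then gives $S\ge\frac12\sum_{N_d<i\le k}w_i$: you lose the \emph{sum} of the first $N_d$ weights, not $N_d$ times the first one. The left-endpoint comparison yields $\theta S\ge\int_{(N_d+1)\theta}^{(k+1)\theta}\log\coth(u/2)\dd u$, and $(N_d+1)\theta\le2\theta+\theta/\sin d\le L_\ast$. Note that the $2\theta_{\max}$ in $L_\ast$ comes from the ceiling and the $+1$, not from discarding terms.

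\textbf{Part (a).} Your pointwise bound on the summands is correct, but the bookkeeping is left unresolved, and the scheme you sketch does not give the stated integral. You keep only $u_i\ge\theta_{\max}$ and require the cells $[u_i,u_i+\theta]$ to lie inside $[\theta_{\max},U]$. That loses up to one grid step at each end. The observation that the smallest $u_i\ge\theta_{\max}$ lies within $\theta$ of $\theta_{\max}$ does not cover the missing piece $[\theta_{\max},u_{i_0}]$.

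The obstacle is self-imposed. Keep \emph{all} indices $1\le i\le I:=\min(k,\lfloor2T/\varphi\rfloor)$; for these the pointwise bound holds. Use $\theta\lambda(i\theta)\ge\int_{i\theta}^{(i+1)\theta}\lambda$ to get $\theta S\ge\int_\theta^{(I+1)\theta}\lambda$. At the lower end, $\theta\le\theta_{\max}$ and $\lambda\ge0$ suffice. At the upper end, $(I+1)\theta\ge\min\bigl((k+1)\theta,2T/c\bigr)\ge\min(u_{\mathrm{top}},2T/c_{\mathrm{hi}})$, because $\lfloor x\rfloor+1>x$. The last cell may extend past $2T/c$, which is harmless since $\lambda$ is a fixed nonnegative function of $u$. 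This is exactly why the hypothesis reads $(k+1)\theta\ge u_{\mathrm{top}}$ rather than $k\theta\ge u_{\mathrm{top}}$.
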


\begin{proof}
By~\eqref{eq:modchi} and $\chi(-\varphi)=\overline{\chi(\varphi)}$ we may take $\varphi>0$; recall that every summand of $S$ is $\ge0$.

(a) Put $c=\varphi/\theta$. For $i\le2T/\varphi$ we have $i\varphi/2\le T$, so $\sin(i\varphi/2)\ge\sigma\,i\varphi/2$, because $\sin y/y$ is
decreasing on $(0,\pi)$. Hence the $i$-th summand is at least
$\log\bigl(1+\sigma^2c^2(i\theta/2)^2/\sinh^2(i\theta/2)\bigr)\ge\lambda(i\theta)$ with $\lambda(u):=\log(1+\sigma^2c_{\mathrm{lo}}^2f_2(u))$,
a decreasing function. Keep only $i\le I:=\min(k,\lfloor2T/\varphi\rfloor)$. Then
$\theta S\ge\theta\sum_{i\le I}\lambda(i\theta)\ge\int_\theta^{(I+1)\theta}\lambda$. Since $(\lfloor2T/\varphi\rfloor+1)\theta>2T/c$, we have
$(I+1)\theta\ge\min((k+1)\theta,2T/c)\ge\min(u_{\mathrm{top}},2T/c_{\mathrm{hi}})$, and $\theta\le\theta_{\max}$, $\lambda\ge0$.

(b) Put $d=\varphi/2\in(0,\pi/2]$ and $w_i:=2\log\coth(i\theta/2)=\log(1+1/\sinh^2(i\theta/2))$, which is decreasing in $i$.
By~\ref{F2} with $t=\sin^2(id)$, the $i$-th summand is at least $\sin^2(id)\,w_i$. Let $G_I:=\sum_{i\le I}\sin^2(id)$. The Dirichlet
identity
\begin{equation}\label{eq:dirichlet}
  G_I=\frac I2-\frac{\sin(Id)\cos((I+1)d)}{2\sin d}
\end{equation}
gives $G_I\ge I/2-1/(2\sin d)$, and since also $G_I\ge0$ we get $G_I\ge\frac12(I-N_d)_+$ with $N_d:=\lceil1/\sin d\rceil$.
By Abel summation, with $w_{k+1}:=0$ and $w_I-w_{I+1}\ge0$,
\[
  S\ \ge\ \sum_{i\le k}\sin^2(id)w_i=\sum_{I\le k}G_I(w_I-w_{I+1})\ \ge\ \tfrac12\sum_{I\le k}(I-N_d)_+(w_I-w_{I+1})=\tfrac12\sum_{N_d<i\le k}w_i .
\]
As $w$ is decreasing, $w_i\ge\theta^{-1}\int_{i\theta}^{(i+1)\theta}2\log\coth(u/2)\dd u$, so
$\theta S\ge\int_{(N_d+1)\theta}^{(k+1)\theta}\log\coth(u/2)\dd u$. Finally $(N_d+1)\theta\le2\theta+\theta/\sin d$, and since $\sin$ is
increasing on $[0,\pi/2]$ and $\sinc$ is decreasing, $\sin d\ge\sin(c_a\theta/2)\ge(c_a\theta/2)\sinc(c_a\theta_{\max}/2)$. Hence
$(N_d+1)\theta\le L_\ast$ and $(k+1)\theta\ge u_{\mathrm{top}}$.
\end{proof}

\begin{corollary}\label{cor:cstar}
If $\theta\le0.0042$ and $(k+1)\theta\ge3$, then $|\chi(\varphi)|\le e^{-c_\ast/\theta}$ for $0.4\theta\le|\varphi|\le\pi$, with
$c_\ast=0.15973$.
\end{corollary}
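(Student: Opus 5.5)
Corollary~\ref{cor:cstar} is Lemma~\ref{lem:minor} with the parameters fixed as $\theta_{\max}=0.0042$, $u_{\mathrm{top}}=3$ and $c_0=0.4$. By~\eqref{eq:modchi}, $\log|\chi(\varphi)|=-S(\varphi)/2$, so it suffices to show $\theta S(\varphi)\ge 2c_\ast=0.31946$ uniformly on $0.4\theta\le|\varphi|\le\pi$. The lemma gives $\theta S\ge\Psi$ with $\Psi$ independent of $\theta$ and $k$, so the whole task is to choose $c_a$, a cover of $[0.4,c_a]$ by cells, and a value of $T$ for each cell, and then certify $\Psi\ge 0.31946$ in ball arithmetic.

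First I will fix $c_a$ from part (b). The admissibility condition is $c_a\theta_{\max}/2\le\pi/2$, i.e.\ $c_a\le\pi/0.0042\approx748$. Part (b) gives $\int_{L_\ast}^{3}\log\coth(u/2)\dd u$, where $L_\ast\approx 0.0084+2/c_a$ (the $\sinc$ factor is close to $1$ here). As $u\to0$ this integral approaches $\int_0^3\log\coth(u/2)\dd u$, which is about $2.4$, well above $0.32$. So a moderate value such as $c_a\approx 20$ gives $L_\ast\approx0.11$ and already leaves a comfortable margin. I would choose $c_a$ large enough that (b) is safe, but no larger, so that the cover in (a) stays short.

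Next, for (a), I cover $[0.4,c_a]$ by cells $[c_{\mathrm{lo}},c_{\mathrm{hi}}]$. The lower bound there is
\[
\int_{0.0042}^{\min(3,2T/c_{\mathrm{hi}})}\log\bigl(1+\sigma^2c_{\mathrm{lo}}^2f_2(u)\bigr)\dd u .
\]
The binding cell is the one at $c_{\mathrm{lo}}=0.4$. For it I take $T=\pi/2$, so $\sigma=2/\pi$ and the upper limit is about $\pi/c_{\mathrm{hi}}\approx 7$, which is capped at $3$. The bound is then roughly $\int_0^3\log(1+0.065f_2(u))\dd u$. Since $f_2$ drops from $1$ to about $0.4$ at $u=3$, this is plausibly around $0.13$–$0.2$. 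That is uncomfortably close to the target, so I expect the value $c_\ast=0.15973$ to be set exactly by this smallest cell, with $2c_\ast\approx0.319$.

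The main obstacle is that margin: the required bound may fail with $T=\pi/2$, in which case I will tune $T$ per cell. A smaller $T$ raises $\sigma$ but shortens the integration range, and for small $c$ the cap $u_{\mathrm{top}}=3$ is the binding constraint, so one can take $T$ as small as $3c_{\mathrm{hi}}/2$ to maximise $\sigma$. For $c_{\mathrm{hi}}$ near $0.4$ that means $T\approx0.6$ and $\sigma\approx0.94$, giving the bound $\int_0^3\log(1+0.14f_2)$, which is about $0.3$. This is the optimisation to carry out, using very thin cells near $c=0.4$ so that $c_{\mathrm{lo}}\approx c_{\mathrm{hi}}$. For larger $c$ the integrand grows, so coarser cells suffice.

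Finally, each cell's integral is evaluated with certified quadrature in Arb, using that the integrand is monotone in $u$ so that Riemann lower sums are rigorous. The minimum over the cells and over (b) is compared with $0.31946$, and this is recorded as a certificate item in Section~\ref{sec:computer}.
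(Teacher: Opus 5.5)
Your proposal is correct and follows the paper's argument: Lemma~\ref{lem:minor} with $\theta_{\max}=0.0042$, $u_{\mathrm{top}}=3$, $c_0=0.4$, a cell cover of $[0.4,c_a]$ with $T$ tuned per cell (the paper takes the best of $41$ values), right Riemann sums of the decreasing integrands in ball arithmetic, and the first cell at $c=0.4$ as the binding one; there $T=\pi/2$ fails outright (about $0.15$, which is well below $0.319$ rather than ``close'' to it), while $T\approx1.5c_{\mathrm{hi}}$ gives roughly $0.32$, just enough. The only real difference is $c_a$: the paper uses $c_a=0.4\cdot150^{96/400}\approx1.331$, so (a) needs only $96$ thin cells and (b) still gives $0.34433$, whereas your $c_a\approx20$ puts a large margin in (b) at the cost of a longer cover in (a); that still works, but your remark that coarser cells suffice ``for larger $c$'' has a limit, because the range $(\theta_{\max},\pi/c_{\mathrm{hi}})$ in (a) shrinks like $1/c$ while the integrand grows only like $\log c$, so (a) fails once $c_a\gtrsim70$.
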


\begin{proof}
Apply Lemma~\ref{lem:minor} with $\theta_{\max}=0.0042$, $u_{\mathrm{top}}=3$, $c_0=0.4$ and $c_a=0.4\cdot150^{96/400}=1.33144\ldots$;
then $L_\ast\le1.5105$. All integrals are right Riemann sums of decreasing integrands, hence rigorous lower bounds, evaluated in
ball arithmetic: (a) gives $\theta S\ge0.31946$ on the $96$ geometric cells of ratio $150^{1/400}$ covering $[0.4,c_a]$ (best of
$41$ values of $T$ per cell), and (b) gives
$\theta S\ge0.34433$. So $\Psi\ge0.31946$ and $c_\ast=\Psi/2\ge0.15973$ (Table~\ref{tab:certs}, C3).
\end{proof}

\subsection{Statement of Theorem A}
\begin{theorem}[Theorem A]\label{thm:A}
Let $n\ge10^5$ and $k\in W=(\mu-2\beta,\mu+2\beta)$. Then
\[
  |L_k-L_{2,k}|\le\varepsilon_k\le\Gamma_{\max}\,\theta q,\qquad \Gamma_{\max}=0.4282 .
\]
More precisely $\varepsilon_k\le0.42024\,\theta q$ when $\theta\ge0.002$, and $\varepsilon_k\le0.17605\,\theta q$ when $\theta\le0.002$.
\end{theorem}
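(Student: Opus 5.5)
The first inequality $|L_k-L_{2,k}|\le\varepsilon_k$ is Proposition~\ref{prop:eps}, once its hypotheses $\delta_0<1$, $N_0-E_D>0$ and $\Delta<M_2$ are checked. So the work is to bound $\varepsilon_k/(\theta q)$ uniformly over all $n\ge10^5$ and $k\in W$. The plan is to reduce $\varepsilon_k$ to a function of a few real parameters ranging over a compact (or compactifiable) box, and then certify the bound by ball arithmetic on a finite cover. By Appendix~\ref{app:eps}, $\varepsilon_k$ is a fixed algebraic expression in $(\theta,\nu',b,s_3,\dots,s_6)$ and in an upper bound for $M_\chi$. Dividing by $\theta q=\theta^2e^{-\nu'}$ gives a quantity whose leading contributions are, by the order count after Lemma~\ref{lem:gap}, of size $O(\theta^4)/(\theta q)=O(\theta^2/q)$ plus lower-order model errors.

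The first step is to locate the parameters on $W$.
\begin{itemize}
\item For $n\ge10^5$ we have $\beta\ge\sqrt{6\cdot10^5}/\pi\approx246$. For $k\in W$ the saddle-point equation $S_k(\theta)=m$, compared with $\zeta_2/\theta^2$ via Lemma~\ref{lem:Fr} with $r=1$, gives $\theta=\beta^{-1}(1+O(e^{-v}))$. Hence $\theta\le\theta_{\max}$ with $\theta_{\max}$ around $0.0041$, and $\nu=K\theta\in[\ell-2-o(1),\ell+2+o(1)]$.
\item Consequently $\nu'=\nu+\log\theta$ lies in a bounded interval, roughly $[-2.1,2.1]$, so $q/\theta=e^{-\nu'}$ is bounded above and below.
\item The scaled cumulants $s_r=\theta\sum_{i\le k}f_r(i\theta)$ are Riemann sums. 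Monotonicity of $f_r$ (or of $f_r$ split into monotone pieces) sandwiches them between $F_r(v)$ and $F_r(v)\pm\theta f_r(0)$. With $v=\nu-\theta\ge u_{\min}$ large, Lemma~\ref{lem:Fr} puts $s_r$ in an explicit interval around $r!\zeta_2$ of width $O(\theta)+O(v^re^{-v})$. In particular $b\approx2\zeta_2$.
\item Since $(k+1)\theta\ge3$ and $\theta\le0.0042$ hold throughout $W$, Corollary~\ref{cor:cstar} gives $M_\chi\le e^{-c_\ast/\theta}$.
\end{itemize}

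The second step is to check the hypotheses of Proposition~\ref{prop:eps} and bound the ratio. Since $w_0=0.4\sqrt{b/\theta}\ge0.4\sqrt{2\zeta_2\cdot240}$ is large, $\delta_0=O(\theta)$ is small. The model tails~\eqref{eq:tails} carry the factor $e^{-w_0^2/4}=e^{-0.04b/\theta}$. The minor-arc terms, divided by $\sqrt{2\pi/B}$, are $O(\theta^{-3/2}e^{-c_\ast/\theta})$. Both are super-polynomially small compared with $\theta q\asymp\theta^2$.

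The remaining terms are:
\begin{itemize}
\item the central-arc terms, whose leading piece $\int e^{-B't^2/2}c_1tT_\omega$ is of order $C_1A_6\sim\theta^{3/2}\theta^2=\theta^{7/2}$ at worst, together with $Q_{\mathrm{abs}}T_H$ terms of order $C_1C_2,\,C_1^3\sim\theta^{9/2}$ and $C_5\sim\theta^{7/2}$;
\item the gap $\mathrm{Num}/N_0=O(\theta^4)$.
\end{itemize}
Dividing by $\theta q\sim\theta^2e^{-\nu'}$ gives $O(\theta^{3/2}e^{\nu'})$. This is small but not uniformly tiny over all of $W$, hence the explicit constants 0.42 and 0.176.

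On second look, the dominant terms are those linear in $c_1\theta\approx\theta+f_1(\nu)$ and the $C_r$, $r\ge2$. These are proportional to $f_r(\nu)\approx\nu^re^{-\nu}=\nu^r q$, so dividing by $\theta q$ leaves bounded $\nu^r$ factors times powers of $\theta$. This is consistent with a constant of order $0.4$ at $\theta\approx0.004$ that improves for smaller $\theta$.

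The computation is organised as follows. Write $\varepsilon_k/(\theta q)$ as an explicit function $\Phi(\theta,\nu',b,s_3,\dots,s_6)$, using $f_r(\nu)$ with $\nu=\nu'-\log\theta$. Evaluate $\Phi$ in ball arithmetic over a grid of cells in $(\theta,\nu')$ covering $[\theta_{\min},0.0042]\times[\nu'_{\min},\nu'_{\max}]$, with $b$ and $s_r$ replaced by their certified enclosures on each cell. This certifies the bound $0.42024$ on $\theta\ge0.002$ and $0.17605$ on $\theta\le0.002$.

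The main obstacle is $\theta\to0$, that is $n\to\infty$, where no finite cover reaches. There I would rescale: the error terms that are not exponentially small are polynomials in $\theta^{1/2}$ with coefficients that are continuous, bounded functions of $\nu$ (through $f_r(\nu)$, and $\nu^re^{-\nu}$ stays bounded as $\nu\to\infty$ in $W$). So on $\theta\le\theta_1$ I would bound $\Phi\le\sum_j\theta^{j/2}\,\sup_\nu P_j(\nu)$ explicitly. The exponentially small minor-arc and tail terms are absorbed using $\theta^{-p}e^{-c/\theta}$ decreasing for $\theta<c/p$. This gives a monotone-in-$\theta$ majorant, and it is checked once at $\theta=0.002$ to get $0.17605$. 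Handling $\nu$ unbounded in the cell decomposition (since $\nu\approx\ell+O(1)$ grows) similarly requires working in $\nu'$, which stays bounded, rather than in $\nu$.
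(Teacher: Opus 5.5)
Your architecture is the paper's: Proposition~\ref{prop:eps} for the first inequality, the localisation of Lemma~\ref{lem:W}, and $M_\chi\le e^{-c_\ast/\theta}$ from Corollary~\ref{cor:cstar}. It then uses a ball-arithmetic box cover in $(\theta,\nu')$ for $0.002\le\theta\le\theta_A$, with $s_r$ enclosed by Riemann sums and $f_r(\nu)$, $c_1$ exact, and a majorant that is monotone in $\theta$ and evaluated at $\theta=0.002$.

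\textbf{Where it fails.} The step that fails as written is the last one. You assert that, after division by $\theta q$, the non-exponentially-small part of $\Phi$ is $\sum_j\theta^{j/2}P_j(\nu)$ with each $P_j$ bounded, so that $\sup_\nu P_j<\infty$. This is false.
\begin{itemize}
\item The factor $q$ in $\theta q$ cancels only if you write $f_r(\nu)=\nu^rq\,P_{r-1}(q)/(1-q)^r$ and $c_1=1+\nu e^{-\nu'}/(1-q)$. That leaves powers of $\nu$, and on the window $\nu=\nu'+\log(1/\theta)\to\infty$ as $\theta\to0$ with $\nu'$ bounded. For example, $C_5/(\theta q)\asymp\nu^5\theta^{3/2}$ and $C_1C_2/(\theta q)\asymp\nu^2(1+\nu e^{-\nu'})\theta^{3/2}$. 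So $\sup_\nu P_j=\infty$.
\item If instead you bound $f_r(\nu)$ by constants, as your parenthetical suggests, you get a different failure. Since $C_1=(\theta+f_1(\nu))\theta^{1/2}b^{-1/2}$, the terms $C_1C_2$ and $C_1^3$ of $T_H$ become of size $\theta^{-1/2}$ after division by $\theta q=\theta^2e^{-\nu'}$. The resulting majorant is then not bounded by its value at $\theta=0.002$.
\end{itemize}
Your closing remark about working in $\nu'$ points the right way, but it does not say how the $\log(1/\theta)$ growth is beaten, and that is the whole content of this step.

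\textbf{The missing idea.} It is the one in Section~\ref{sec:proofA} and Appendix~\ref{app:regionII}. Put $t=\sqrt\theta$ and expand every term, after division by $\theta q=t^2q$, into nonnegative combinations of monomials $t^P\nu^je^{-\beta_q\nu'}$. At fixed $\nu'$ use $\nu=\nu'+2\log(1/t)$ and
\[
  \frac{d}{dt}\bigl[t^P\nu^j\bigr]=t^{P-1}\nu^{j-1}(P\nu-2j).
\]
Then verify, for every monomial, either $P=j=0$ or $P>0$ and $\nu_{\min}>2j/P$. This is a real check, not a formality. Near $\nu'=-2.52$ one only has $\nu\ge-2.52+\log500\approx3.69$ for $\theta\le0.002$, while the $C_5$ monomial ($P=3$, $j=5$) already needs $\nu>10/3$. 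Only after this check does the value at $\theta=0.002$ bound all smaller $\theta$.

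\textbf{Smaller points.}
\begin{itemize}
\item $\theta^{-p}e^{-c/\theta}$ is \emph{increasing}, not decreasing, for $\theta<c/p$. Increasing is what you need, with $\theta<0.02b$ and $\theta<c_\ast/3.5$ checked.
\item For $r\ge3$ the Riemann-sum error is $\theta\,\TV(f_r)$, not $\theta f_r(0)$, since $f_r$ is not monotone.
\item Your heuristic $\theta=\beta^{-1}(1+O(e^{-v}))$ misses a factor $\ell$: the relative correction is $O(\ell/\beta)$. The certified range is $\nu'\in[-2.5102,2.0840]$, not roughly $[-2.1,2.1]$. So build the cover on Lemma~\ref{lem:W} itself rather than on the sketch.
\end{itemize}
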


The first inequality is Proposition~\ref{prop:eps}. The second is proved in Section~\ref{sec:proofA} after the ingredient bounds are in
place.

%======================================================================
\section{Ingredient bounds}\label{sec:ingredients}
%======================================================================

\subsection{The window}
Write $n=\zeta_2\beta^2$ and $\beta_A:=\sqrt{6\cdot10^5}/\pi=246.56\ldots$, $\ell_A=\log\beta_A$.

\begin{lemma}[Lemma W]\label{lem:W}
Let $n\ge10^5$, $k\in W$, and put $\delta:=5\ell/\beta$, $\epsilon_\beta:=(\ell+2+1/\beta)/(\zeta_2\beta)$ and $r:=(1-\epsilon_\beta)^{-1/2}$. Then
\[
  \frac{1-\delta}{\beta}<\theta\le\frac r\beta,
\]
and consequently
\[
  \theta\le\theta_A:=0.0040939,\qquad \nu'\in[-2.5102,\ 2.0840],\qquad \nu\ge3.1158 .
\]
\end{lemma}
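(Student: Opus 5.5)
The plan is to control $\theta$ through the saddle-point equation $S_k(\theta)=m=n-k$, comparing the sum with the integral $\int_0^\infty u/(e^u-1)\dd u=\zeta_2$. By monotonicity of $S_k$, it suffices to evaluate $S_k$ at the two candidate endpoints. If $S_k(t)\ge m$ then $\theta\ge t$, and if $S_k(t)\le m$ then $\theta\le t$.

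We write $\theta^2 S_k(\theta)=\theta\sum_{i\le k}f_1(i\theta)$. Since $f_1$ is decreasing with $f_1(0)=1$, the right and left Riemann sums give
\[
F_1(K\theta)-\theta\ \le\ \theta\sum_{i\le k}f_1(i\theta)\ \le\ F_1(k\theta),
\]
where $F_1(v)=\zeta_2-\Li_2(e^{-v})-v\Li_1(e^{-v})$ by Lemma~\ref{lem:Fr}. Hence
\[
\zeta_2-\theta-\Li_2(e^{-\nu})-\nu\Li_1(e^{-\nu})\ \le\ \theta^2 m\ \le\ \zeta_2 .
\]

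\emph{Upper bound on $\theta$.} We need a lower bound on $\theta^2 m$ of the form $\theta^2 m\ge\zeta_2(1-\epsilon_\beta)$, which then gives $\theta^2\le \zeta_2/(\zeta_2\beta^2(1-\epsilon_\beta))\cdot(n/m)$. We use $m=n-k\ge n-\mu-2\beta$, so the relative loss from $m<n$ is $(\ell+2)\beta/(\zeta_2\beta^2)$. We also need the loss $\theta+\Li_2(e^{-\nu})+\nu\Li_1(e^{-\nu})$, which is $O(1/\beta+\nu e^{-\nu})$. This term is small only once we know $\nu=K\theta$ is not small, i.e.\ roughly $\nu\ge\log\beta-2$. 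To get that, we first run a crude argument. From the upper bound $\theta^2m\le\zeta_2$ we get $\theta\le\sqrt{\zeta_2/m}$, but that is an upper bound, so it does not directly give a lower bound on $\nu$. Instead we will use the lower bound on $\theta$ first.

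\emph{Lower bound on $\theta$.} We show $S_k(t)\ge m$ at $t=(1-\delta)/\beta$. Equivalently, $t\sum_{i\le k}f_1(it)\ge t^2 m$. Since $t^2m\le t^2 n=\zeta_2(1-\delta)^2$, it suffices to have
\[
F_1(Kt)-t\ \ge\ \zeta_2(1-\delta)^2 .
\]
Because $k>\mu-2\beta$, we have $Kt\ge(1-\delta)(\ell-2)$, so the deficit $\zeta_2-F_1(Kt)$ is at most about $(Kt+1)e^{-Kt}\approx \ell e^{2}/\beta$. This must be compared with $2\zeta_2\delta=10\zeta_2\ell/\beta$. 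Since $e^2\approx7.39<16.4$, this works, with room for the $t$ term and the $(1-\delta)$ factors. From $\theta>(1-\delta)/\beta$ we then get $\nu\ge(1-\delta)(\mu-2\beta+1)/\beta$. This gives the explicit $\nu\ge3.1158$ once it is evaluated at the worst case $\beta=\beta_A$, and more generally $\nu\ge \ell-2-o(1)$.

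With that in hand we return to the upper bound. The term $\nu\Li_1(e^{-\nu})+\Li_2(e^{-\nu})$ is bounded by $(\nu+1)e^{-\nu}/(1-e^{-\nu})$, which, using $\nu\gtrsim\ell-2$, is about $e^2\ell/\beta$. Combining this with the $m$-loss, we should recover $\epsilon_\beta=(\ell+2+1/\beta)/(\zeta_2\beta)$.

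\emph{Main obstacle.} I expect the main obstacle to be exactly this constant bookkeeping: checking that the crude tail bound fits inside $\epsilon_\beta$ as defined. I might have to bound the tail via the lower endpoint $\nu\ge(1-\delta)(\ell-2)$ and absorb constants by monotonicity in $\beta\ge\beta_A$.

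\emph{Numerical consequences.} The final constants follow by monotonicity in $\beta$ of the bounds, evaluated at $\beta_A$ in ball arithmetic. We get $\theta\le r/\beta\le\theta_A$, where $r/\beta$ is decreasing in $\beta$. For $\nu'=\nu+\log\theta$ we use $\nu\in(\theta(\mu-2\beta+1),\theta(\mu+2\beta+1))$, so $\nu'$ lies within $\ell\pm2$ plus $\log\theta\approx-\ell$, corrected by the factors $1-\delta$ and $r$. Each endpoint is monotone in $\beta$ for $\beta\ge\beta_A$, possibly after checking the limit as $\beta\to\infty$ as well.
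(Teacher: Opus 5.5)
Your lower bound on $\theta$ follows the paper's argument: the right Riemann sum with $F_1(t)\le t$, and the tail bound $(v+1)e^{-v}/(1-e^{-v})$ at $v_-=(\ell-2)(1-\delta)$. Your treatment of $\nu'$ and $\nu$ is also essentially the paper's; you should state explicitly that $K\theta+\log\theta$ is increasing in $\theta$ at fixed $K$, so that both $\theta$-bounds can be inserted.

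The upper bound on $\theta$, however, is wrong as proposed. A lower bound $\theta^2m\ge\zeta_2(1-\epsilon_\beta)$ gives $\theta^2\ge\zeta_2(1-\epsilon_\beta)/m$, which is a \emph{lower} bound on $\theta$. It cannot yield $\theta^2\le\zeta_2/((1-\epsilon_\beta)m)$. The plan also fails quantitatively. The stated $\epsilon_\beta$ accounts only for the loss from $m<n$ (plus $1/\beta$). The tail loss $\Li_2(e^{-\nu})+\nu\Li_1(e^{-\nu})$, for $k$ near the left end of $W$, is about $e^2\ell/\beta$, which is at least five times $\zeta_2\epsilon_\beta\approx(\ell+2)/\beta$. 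So the bookkeeping you flag as the main obstacle cannot close, and you would not ``recover $\epsilon_\beta$''. This matters for the conclusions: $\theta_A=0.0040939$ and $\nu'\le2.0840$ are exactly $r/\beta$ and $(\ell+2+1/\beta)r-\ell+\log r$ evaluated at $\beta_A$, so any inflated $\epsilon$ would lose them.

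The missing step is the inequality you wrote down and then set aside. The bound $\theta^2m=\theta^2S_k(\theta)\le F_1(k\theta)\le\zeta_2$ \emph{is} the upper bound. Since $k<\mu+2\beta=\beta(\ell+2)$, you have $m>n-\beta(\ell+2)$, and hence $\theta^2\le\zeta_2/m<1/\bigl(\beta^2(1-(\ell+2)/(\zeta_2\beta))\bigr)\le r^2/\beta^2$. The factor $1-\epsilon_\beta$ bounds $m/n$ from below, not $\theta^2m/\zeta_2$. No information about $\nu$ and no tail estimate enters, so nothing needs to be proved first.

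For the numerical constants, the paper does not use monotonicity in $\beta$. It certifies $G$ and the endpoint expressions by bisection over $\ell\in[\ell_A,60]$, plus a tail box for $\ell\ge60$. Your monotonicity route can work, since these expressions are monotone for $\ell\ge\ell_A$, but you would have to prove that. Also, the margin in the lower bound is much thinner than ``$7.39<16.4$'' suggests. At $\beta_A$ the factors $\beta^\delta=e^{5\ell^2/\beta}\approx1.85$ and $1/(1-e^{-v_-})\approx1.05$ push the tail term to about $14.3$. This is set against $10\zeta_2-25\zeta_2\ell/\beta-1/\ell\approx15.35$, leaving only $G\approx1.05$.
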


\begin{proof}
\emph{Two-sided bound for $t^2S_k(t)$.} We have $t^2S_k(t)=t\sum_{i\le k}f_1(it)$ with $f_1$ decreasing and $0<f_1\le1$. Comparing with
integrals, $F_1((k+1)t)-F_1(t)\le t^2S_k(t)\le F_1(kt)\le\zeta_2$, and $F_1(t)\le t$. Write $F_1(v)=\zeta_2-T(v)$ with
$T(v)=\int_v^\infty f_1=\sum_l e^{-lv}(v/l+1/l^2)\le(v+1)e^{-v}/(1-e^{-v})$.

\emph{Upper bound.} Since $k<\mu+2\beta$, $m>n-\beta(\ell+2)$, and $\theta^2m\le\zeta_2$ gives
$\theta^2\le\zeta_2/m\le r^2/\beta^2$.

\emph{Lower bound.} Let $\theta_-:=(1-\delta)/\beta$. Since $S_k$ is decreasing it suffices to show $S_k(\theta_-)>m$. Now
$\theta_-^2m\le\theta_-^2n=(1-\delta)^2\zeta_2$ and $\theta_-^2S_k(\theta_-)\ge\zeta_2-T(K\theta_-)-\theta_-$, so it suffices that
$(2\delta-\delta^2)\zeta_2>T(K\theta_-)+\theta_-$. As $K>\beta(\ell-2)$, $K\theta_-\ge(\ell-2)(1-\delta)=:v_-$, and the bound for $T$
turns the sufficient condition into $G>0$, where (after multiplication by $\beta/\ell$)
\[
  G:=2\cdot5\,\zeta_2-25\,\zeta_2\,\ell/\beta-e^2\beta^{\delta}/(1-e^{-v_-})-1/\ell .
\]

\emph{Parameter ranges.} At fixed $K$, $\nu'=K\theta+\log\theta$ is increasing in $\theta$. With $K<\mu+2\beta+1$ and $\theta\le r/\beta$
this gives $\nu'\le(\ell+2+1/\beta)r-\ell+\log r$; with $K>\mu-2\beta$ and $\theta>(1-\delta)/\beta$ it gives
$\nu'\ge-2-\delta(\ell-2)+\log(1-\delta)$ and $\nu\ge(\ell-2)(1-\delta)$.

\emph{Certification.} Each of the resulting expressions, and $G$, is a function of $x=1/\beta$, $\ell x$, $\ell^2x$ and $1/\ell$. An
adaptive ball-arithmetic bisection over $\ell\in[\ell_A,60]$ ($5486$ boxes), together with one tail box for $\ell\ge60$ (where
$x,\ell x,\ell^2x\le3600e^{-60}$ since $\ell^ae^{-\ell}$ is decreasing for $\ell>a$), certifies $G\ge1.0092$ and the stated
constants for every $\beta\ge\beta_A$ (Table~\ref{tab:certs}, C1). No monotonicity in $\beta$ is used.
\end{proof}

The same certification gives the constants used for the other regions:
\begin{equation}\label{eq:CWconst}
  \nu'\le\nu_{CL}:=-1.95361\ \text{ for }k\le\mu-2\beta,\qquad \nu'\ge\nu_{CR}:=1.04305\ \text{ for }k\ge\mu+2\beta,
\end{equation}
and the edge values $\nu'\le-1.906674$ at the left end of $W$ and $\nu'\ge1.04305$ at the right end. (For~\eqref{eq:CWconst} one uses
that $\theta_k$ is increasing in $k$, since $S_k(t)$ increases with $k$ while $m=n-k$ decreases.)

\subsection{Riemann sums}
\begin{lemma}\label{lem:riemann}
Let $f:[0,\infty)\to\RR$ have $\TV(f)<\infty$. Then for $\theta>0$ and $k\ge1$,
$\bigl|\theta\sum_{i\le k}f(i\theta)-\int_0^{k\theta}f\bigr|\le\theta\Var_{[0,k\theta]}f\le\theta\TV(f)$.
\end{lemma}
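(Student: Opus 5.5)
We prove this by splitting the sum and the integral into the cells $[(i-1)\theta,i\theta]$ for $i=1,\dots,k$. Since these cells partition $[0,k\theta]$, we have $\int_0^{k\theta}f=\sum_{i\le k}\int_{(i-1)\theta}^{i\theta}f$. Consequently
\[
  \theta\sum_{i\le k}f(i\theta)-\int_0^{k\theta}f=\sum_{i\le k}\int_{(i-1)\theta}^{i\theta}\bigl(f(i\theta)-f(u)\bigr)\dd u .
\]
This is just the right-endpoint Riemann sum, compared cell by cell.

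On the $i$-th cell, every $u$ satisfies
\[
  |f(i\theta)-f(u)|\le\Var_{[(i-1)\theta,i\theta]}f .
\]
This holds because any single increment $|f(b)-f(a)|$ with $a,b$ in an interval is bounded by the variation over that interval; it follows from the definition of total variation by taking the partition $\{a,b\}$. The integral over the $i$-th cell is therefore at most $\theta\Var_{[(i-1)\theta,i\theta]}f$ in absolute value.

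Summing over $i$ and using the additivity of variation over adjacent closed intervals,
\[
  \sum_{i\le k}\Var_{[(i-1)\theta,i\theta]}f=\Var_{[0,k\theta]}f,
\]
gives the first inequality. The second, $\Var_{[0,k\theta]}f\le\TV(f)$, holds because variation is monotone under enlarging the interval: any partition of $[0,k\theta]$ is also a partition of a subinterval of $[0,\infty)$, so it is admissible in the supremum that defines $\TV(f)$.

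No measurability or integrability issue arises: a function of bounded variation on a compact interval is a difference of two monotone functions, so it is bounded and Riemann integrable on $[0,k\theta]$, and the integrals above make sense. Nothing else in the argument is delicate. The only point to state carefully is additivity of variation over adjacent intervals, which is a standard fact.
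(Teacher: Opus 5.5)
Your proposal is correct and follows essentially the same route as the paper: compare $\theta f(i\theta)$ with $\int_{(i-1)\theta}^{i\theta}f$ cell by cell, bound $|f(i\theta)-f(u)|$ by the variation on the cell, and sum using additivity of variation. Your added remarks on Riemann integrability and on monotonicity of variation under enlarging the interval are correct; the paper leaves them implicit.
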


\begin{proof}
$\theta f(i\theta)-\int_{(i-1)\theta}^{i\theta}f=\int_{(i-1)\theta}^{i\theta}(f(i\theta)-f(u))\dd u$, and $|f(i\theta)-f(u)|\le\Var_{[(i-1)\theta,i\theta]}f$.
Summing over $i$ and using additivity of the variation gives the claim.
\end{proof}

\begin{lemma}\label{lem:TV}
The total variations of $f_1,\dots,f_6$ on $[0,\infty)$ satisfy
\[
  \TV(f_1),\dots,\TV(f_6)\ \le\ 1.000001,\ \ 1.000001,\ \ 2.000001,\ \ 6.086765,\ \ 25.691943,\ \ 138.425 .
\]
\end{lemma}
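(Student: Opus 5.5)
The plan is to compute total variation as $\TV(f_r)=\int_0^\infty|f_r'(u)|\dd u$ and to split $[0,\infty)$ at the sign changes of $f_r'$. Each $f_r$ is smooth on $[0,\infty)$: it extends analytically through $u=0$, since $f_r(u)=u^r\Li_{1-r}(e^{-u})$ is $(-1)^{r}\frac{d^{r-1}}{du^{r-1}}\frac{1}{e^u-1}$ times $u^r$, and the pole of order $r$ cancels. Also $f_r(u)\to0$ as $u\to\infty$. So if $0=z_0<z_1<\dots<z_J$ are the zeros of $f_r'$ in $(0,\infty)$, then
\[
  \TV(f_r)=\sum_{j=0}^{J}|f_r(z_{j+1})-f_r(z_j)|,\qquad z_{J+1}:=\infty,\ f_r(\infty)=0 .
\]
For $r=1,2$ the text already notes that $f_1,f_2$ are decreasing from $f_r(0)=(r-1)!=1$ to $0$, so $\TV=1$. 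For $r=3$ I expect $f_3=u^3\Li_{-2}(e^{-u})=u^3e^{-u}(1+e^{-u})/(1-e^{-u})^3$ to be decreasing as well, giving $\TV(f_3)=2=2!$. I would check this by showing $f_3'<0$ through an explicit sign analysis of a polynomial in $e^{-u}$ and $u$, or simply by the certified computation described below. The extra $10^{-6}$ in each stated bound is only the ball-arithmetic slack.

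For $r=4,5,6$ the stated values exceed $(r-1)!$, namely $6.086765>6$, $25.69>24$ and $138.4>120$. So $f_r$ first rises from $(r-1)!$ to an interior maximum $f_r(z_1)$ and then decays to $0$, giving
\[
  \TV(f_r)=2f_r(z_1)-(r-1)!
\]
if there is exactly one critical point. I would prove unimodality (one critical point) and locate $z_1$ as follows.
\begin{enumerate}
\item Write $f_r'(u)=u^{r-1}P_r(u,e^{-u})/(1-e^{-u})^{r+1}$ with an explicit polynomial $P_r$. This follows from $\frac{d}{du}\Li_{1-r}(e^{-u})=-\Li_{-r}(e^{-u})$, so $f_r'=u^{r-1}\bigl(r\Li_{1-r}(e^{-u})-u\Li_{-r}(e^{-u})\bigr)$, and both polylogarithms are Eulerian-number rational functions.
\item Certify in interval arithmetic that $P_r$ has exactly one sign change on $(0,\infty)$. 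On a bounded interval $[0,U]$, use bisection on the sign of $f_r'$, together with a derivative bound (or interval Newton) on the single box where the sign changes, to isolate $z_1$ in a tiny ball.
\item Evaluate $f_r$ on that ball to get an upper bound for $f_r(z_1)$; an enclosure of $\sup f_r$ over the ball suffices.
\end{enumerate}

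The tail $u\ge U$ is handled analytically. There $r\Li_{1-r}(y)-u\Li_{-r}(y)$ with $y=e^{-u}$ is dominated by its leading terms $ry-uy$ up to $O(u\,y^2\,\mathrm{poly})$, hence negative for $u>r+\text{small}$. Taking $U$ around $2r+10$ makes this elementary. Near $u=0$ the factor $(1-e^{-u})^{-(r+1)}$ is singular, so on a small interval $[0,u_0]$ I would instead use the Bernoulli expansion of $f_r$ at $0$ to get the sign of $f_r'$. Since $f_r(u)=(r-1)!+\alpha_r u^2+\dots$ by evenness considerations, $f_r'>0$ for small $u$ when $\alpha_r>0$. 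Alternatively, avoid the singularity by evaluating $P_r$ directly, since $P_r$ is a genuine entire function.

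The main obstacle is this rigorous handling near $u=0$ and the certification that there is exactly one critical point, which gives the closed formula $2\max f_r-(r-1)!$. If unimodality is awkward to certify, the fallback is cruder but simpler: bound $\TV$ on $[0,U]$ by a fine partition, summing $\sup$ of $|f_r'|$ times box length in ball arithmetic, and add $f_r(U)$ for the monotone tail. The tolerance in the stated constants, for example $138.425$, leaves room for this. The final step records these as a computer-assisted certificate in the style of Table~\ref{tab:certs}.
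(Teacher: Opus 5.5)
\paragraph{Gap: $f_6$ is not unimodal.} Your general formula $\TV(f_r)=\sum_j|f_r(z_{j+1})-f_r(z_j)|$ over \emph{all} zeros of $f_r'$ is correct. The route you actually propose for $r=4,5,6$, however, rests on a false claim: $f_6$ does not first rise from $5!=120$, and $f_6'$ has two zeros, not one.

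From $u/(e^u-1)=\sum_jB_ju^j/j!$ and $\Li_{1-r}(e^{-u})=(-1)^{r-1}\frac{d^{r-1}}{du^{r-1}}\frac1{e^u-1}$ (the sign is $(-1)^{r-1}$, not $(-1)^r$), one gets
\[
  f_r(u)=(r-1)!+(-1)^{r-1}\sum_{j\ge r,\ j\ \mathrm{even}}\frac{B_j}{j\,(j-r)!}\,u^j .
\]
So for $r\ge3$ there is no $u^2$ term. The initial direction is set by the sign of $(-1)^{r-1}B_{j_0}$, where $j_0$ is the least even integer $\ge r$. This gives $f_4=6+u^4/120-\cdots$ and $f_5=24+u^6/252-\cdots$, but $f_6=120-u^6/252+u^8/480-\cdots$.

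Numerically, $f_6$ decreases to a local minimum $\approx119.99306$ near $u\approx1.45$, rises to its maximum $\approx129.20555$ near $u\approx5.08$, and then decays. Consequently:
\begin{itemize}
\item Your step~2, certifying exactly one sign change of $P_6$, fails.
\item Your formula gives $2\max f_6-120\approx138.4111$. The true value is $\TV(f_6)=120-2\min f_6+2\max f_6\approx138.42498$, so your formula is not a valid upper bound. The stated $138.425$ sits just above the true value, i.e.\ it accounts for the dip.
\item The inference ``$\TV(f_r)>(r-1)!$, so $f_r$ first rises'' only shows that $f_r$ is not monotone.
\end{itemize}

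\paragraph{How the paper avoids this.} The repair is not to presuppose the number of critical points, which is what the paper does.
\begin{itemize}
\item It covers $[0,60]$ by $6000$ cells. On each cell where ball arithmetic certifies a constant sign of $f_r'$, it takes the exact contribution $|f_r(b)-f_r(a)|$.
\item It bisects the remaining cells to depth $14$ and bounds each final cell by $(b-a)\sup|f_r'|$.
\item It adds $f_r(60)$ for the tail, since each term $l^{r-1}u^re^{-lu}$ decreases for $u>r$. Equivalently, $r\Li_{1-r}(y)-u\Li_{-r}(y)=\sum_l l^{r-1}(r-lu)y^l<0$ for $u>r$, which is simpler than your dominance argument.
\item Near $0$ it uses the Bernoulli expansion for $u\le2.5$, as you also suggest. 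Evaluating your $P_r$ there would not work, because $P_r$ vanishes to high order at $u=0$ and interval evaluation cannot certify its sign.
\end{itemize}

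This hybrid is also what makes the stated constants attainable. Your pure fallback, summing $(b-a)\sup|f_r'|$ over every cell of width $h$, overestimates by roughly $\frac h2\int|f_r''|$. The margin in $138.425$ is only about $2\cdot10^{-5}$, so it would need a mesh of order $10^{-6}$ rather than leaving comfortable room.
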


\begin{proof}
On $[60,\infty)$ each $f_r$ is decreasing to $0$ (each term $l^{r-1}u^re^{-lu}$ decreases for $u>r$), so the variation there is $f_r(60)$.
On $[0,60]$ we use $6000$ cells. On a cell where ball arithmetic certifies that $f_r'$ has constant sign, $f_r$ is monotone and the
variation is $|f_r(b)-f_r(a)|$. Otherwise the cell is bisected (to depth $14$), and on the final cells the variation is bounded by
$(b-a)\sup|f_r'|\ge\int_a^b|f_r'|$. For $u\le2.5$, $f_r$ and $f_r'$ are evaluated from their Bernoulli-number expansions with a rigorous
tail bound ($|B_j|/j!\le4/(2\pi)^j$); for $u>2.5$, from the polylogarithm and the identity $uf_r'=rf_r-f_{r+1}$
(Table~\ref{tab:certs}, C2).
\end{proof}

\begin{corollary}\label{cor:sr}
$s_r\in F_r(\nu-\theta)\pm\theta\,\TV(f_r)$ and $0\le s_r\le r!\zeta_2+\theta\TV(f_r)$. In particular
$b\ge F_2(\nu-\theta)-\theta\TV(f_2)$.
\end{corollary}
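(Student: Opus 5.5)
This is a direct application of Lemma~\ref{lem:riemann} to $f=f_r$ with step $\theta$ and $k$ terms, combined with the identities \eqref{eq:scaled}. By \eqref{eq:scaled}, $s_r=\theta\sum_{i\le k}f_r(i\theta)$. By definition $k\theta=v=\nu-\theta$, and $\int_0^{k\theta}f_r=F_r(\nu-\theta)$.

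Each $f_r$ has finite total variation on $[0,\infty)$ by Lemma~\ref{lem:TV}; for $r\ge 7$ this is also clear, since $f_r$ is continuous on $[0,\infty)$, eventually monotone and tends to $0$. Lemma~\ref{lem:riemann} then gives
\[
  |s_r-F_r(\nu-\theta)|\le\theta\Var_{[0,k\theta]}f_r\le\theta\TV(f_r),
\]
which is the first assertion.

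For the two-sided bound, the lower bound $s_r\ge0$ is immediate. Indeed, $f_r(u)=u^r\sum_l l^{r-1}e^{-lu}>0$ for $u>0$, so every summand of $s_r$ is positive. For the upper bound we combine the first assertion with Lemma~\ref{lem:Fr}, which gives $F_r(\nu-\theta)\le F_r(\infty)=r!\zeta_2$ because $F_r$ is nondecreasing. This yields
\[
  s_r\le r!\zeta_2+\theta\TV(f_r).
\]

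The final claim is the case $r=2$ of the lower half of the first assertion, since $b=s_2$.

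There is no real obstacle here. The only point to check is that the upper limit of the Riemann sum is exactly $k\theta=\nu-\theta$ and not $K\theta=\nu$. This is guaranteed by the convention $v=k\theta=\nu-\theta$ in \eqref{eq:params}.
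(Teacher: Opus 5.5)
Your proposal is correct and is essentially the paper's argument: write $s_r=\theta\sum_{i\le k}f_r(i\theta)$, apply Lemma~\ref{lem:riemann} with $k\theta=\nu-\theta$, and use Lemma~\ref{lem:Fr} to get $F_r(\nu-\theta)\le r!\zeta_2$; your positivity argument for $s_r\ge0$ spells out a step the paper leaves implicit. The aside for $r\ge7$ is loosely justified, since continuity plus eventual monotonicity alone does not give finite variation on the compact part, but smoothness of $f_r$ on $[0,\infty)$ does; and if $\TV(f_r)$ were infinite the statement would hold trivially.
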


\begin{proof}
By~\eqref{eq:scaled}, $s_r=\theta\sum_{i\le k}f_r(i\theta)$; apply Lemmas~\ref{lem:riemann} and~\ref{lem:Fr} with $k\theta=\nu-\theta$.
\end{proof}

Note that $f_r(\nu)=c_r\theta^r$ and $c_1=1+\nu e^{-\nu'}/(1-q)$ are \emph{exact} functions of $(\theta,\nu')$. Thus every quantity in
Proposition~\ref{prop:eps} is enclosed, given $(\theta,\nu')$, by the enclosures of Corollary~\ref{cor:sr}, and $M_\chi$ by
Corollary~\ref{cor:cstar}.

\subsection{Proof of Theorem A}\label{sec:proofA}
By Lemma~\ref{lem:W}, for $k\in W$ we have $\theta\le\theta_A$, $\nu'\in[-2.52,2.10]$ and $\nu\ge3.1158$; in particular
Corollary~\ref{cor:cstar} applies. Since $\theta q=\theta^2e^{-\nu'}$, it suffices to bound $\Gamma:=\varepsilon_k/(\theta^2e^{-\nu'})$.

\emph{Region I: $0.002\le\theta\le\theta_A$.} We cover $\theta$ by geometric cells of ratio $1.01$ and $\nu'$ by cells of width $0.01$
(clipped to $\nu\ge3.1158$). On each box we evaluate $\varepsilon$ of Proposition~\ref{prop:eps} in ball arithmetic, with $s_r$ ranging over
the enclosure of Corollary~\ref{cor:sr} for every $(\theta,\nu')$ in the box, $f_r(\nu)$ and $c_1$ exact, and
$M_\chi\le e^{-0.15926/\theta}$ (a rounded-down value, valid since $0.15926\le c_\ast$); the code asserts $\delta_0<1$, $N_0-E_D>0$ and $\Delta<M_2$ on every box, and divides by
$\min\theta^2e^{-\nu'}$ over the box. The maximum is $\Gamma\le0.42024$, attained near $\nu'=2.1$ (Table~\ref{tab:certs}, A1).

\emph{Region II: $\theta\le0.002$.} Here no box cover is possible, and we use a monotone majorant in $t=\sqrt\theta$. After dividing by
$\theta q=t^2q$, every term of $\varepsilon$ is bounded by a nonnegative combination of monomials $t^P\nu^je^{-\beta_q\nu'}$ with
coefficients that are bounded uniformly through $s_r\le r!\zeta_2+0.002\TV(f_r)$, $b\ge F_2(\nu_{\min}-0.002)-0.002\TV(f_2)$ and
$e^{-\nu}\le e^{-\nu_{\min}}$. At fixed $\nu'$ we have $\nu=\nu'+2\log(1/t)$ and
\[
  \frac{d}{dt}\bigl[t^P\nu^j\bigr]=t^{P-1}\nu^{j-1}(P\nu-2j),
\]
which is nonnegative when $P>0$ and $\nu\ge2j/P$ (and trivially when $j<0$). The program asserts, for every
monomial, either $P=j=0$ (a constant term) or $P>0$ and $\nu_{\min}>2j/P$, and evaluates the majorant at $\theta=0.002$. The Gaussian-tail and minor-arc factors $e^{-0.04b/\theta}\theta^{-2}$ and
$e^{-c_\ast/\theta}\theta^{-3.5}$ are increasing on the relevant range because $\theta<0.02b$ and $\theta<c_\ast/3.5$, which is asserted. The
result is $\Gamma\le0.17605$ for all $\theta\le0.002$ and $\nu'\in[-2.52,2.10]$ (Table~\ref{tab:certs}, A2). The details of the majorant
are in Appendix~\ref{app:regionII}. \qed

%======================================================================
\section{The step of the main term}\label{sec:B}
%======================================================================

In this section $k$ and $k+1$ both lie in $W$, and primes refer to the index $k+1$: $\theta'=\theta_{k+1}$ is the saddle point for
$(m-1,k+1)$, $q'=e^{-(K+1)\theta'}$, $\nu_{k+1}=(K+1)\theta'$, and so on.

\begin{theorem}[Theorem B]\label{thm:B}
Let $n\ge10^5$ and $k,k+1\in W$. Then
\[
  L_{2,k}-L_{2,k+1}\ \ge\ \Sigma_{\min}\,\theta_kq_k,\qquad \Sigma_{\min}=0.91918,
\]
and $\theta_{k+1}q_{k+1}\le1.01\,\theta_kq_k$. In particular $L_2$ is strictly decreasing on $W$ and
\[
  L_{2,k}-L_{2,k+1}\ \ge\ 0.9100\,\max(\theta_kq_k,\theta_{k+1}q_{k+1})\ >\ 2\Gamma_{\max}\max(\theta_kq_k,\theta_{k+1}q_{k+1}) .
\]
\end{theorem}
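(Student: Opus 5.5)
We will write $L_{2,k}=-\theta-\log(1-q)+\log M_2$ and compare $k$ with $k+1$. The leading behaviour is in $-\theta-\log(1-q)\approx-\theta+q$. Since $\theta$ increases with $k$ (remark after~\eqref{eq:CWconst}), the term $-\theta$ can only decrease. The step that must produce $\theta q$ therefore comes from the change in $\theta$ and in $q$.

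\emph{Step 1: the increment of the saddle point.} Implicit differentiation of $S_k(\theta)=m$ in the pair $(k,m)\mapsto(k+1,m-1)$ gives
\[
\theta'-\theta\approx\frac{1+(k+1)/(e^{(k+1)\theta}-1)}{-S_k'(\theta)}=\frac{c_1}{B}.
\]
Here $B=\kappa_2=-S_k'$, so $\theta'-\theta\approx\theta^3c_1/b$, which is of order $\theta^3$. I will make this rigorous with two-sided bounds. The map $t\mapsto S_k(t)$ is convex and decreasing, so the mean value theorem brackets $\theta'-\theta$ between $(1+(K)/(e^{K\theta'}-1))/B(\theta)$ and the analogous quantity at $\theta'$. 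The relative change of $B$ across $[\theta,\theta']$ is controlled by $\kappa_3(\theta'-\theta)/B=O(\theta)$.

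\emph{Step 2: the increment of $q$ and of $-\log(1-q)$.} We have $\nu_{k+1}-\nu_k=\theta'+K(\theta'-\theta)$. The first term is about $\theta$, while $K(\theta'-\theta)\approx\nu\theta^2c_1/b=O(\theta^2\log(1/\theta))$ is smaller. Hence $q'/q=e^{-(\nu_{k+1}-\nu_k)}\approx1-\theta(1+O(\theta\log))$, and $-\log(1-q)$ drops by about $\theta q/(1-q)$. The $-\theta$ term contributes a further decrease of $c_1\theta^3/b$, which is $O(\theta^3)$ and negligible but has the right sign. The leading step is therefore $\theta q\,(1-q)^{-1}\,(1+O(\theta\log(1/\theta)))$. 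This already exceeds $0.91918\,\theta q$ provided the relative corrections stay below about $8\%$, and with $\theta\le\theta_A$ from Lemma~\ref{lem:W} they should. The same computation yields $\theta'q'/(\theta q)=(\theta'/\theta)(q'/q)\le(1+O(\theta^2))(1-\theta+\dots)\le1.01$ directly.

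\emph{Step 3: the increment of $\log M_2$.} This is the main obstacle. $M_2-1=O(\theta^2)$ depends on $(\theta,\nu',b,s_3,s_4,s_5)$ through $c_r\theta^r=f_r(\nu)$ and the $A_r$. Its change under $k\to k+1$ must be shown to be $o(\theta q)$, or at least to be bounded by a certified constant times $\theta q$ that is small compared with $1-0.919$. The difficulty is that $s_r$ is known only to within $\theta\TV(f_r)$ (Corollary~\ref{cor:sr}), which is far too crude for a difference of consecutive values. I will therefore not difference the enclosures. Instead I will write $s_r(k+1)-s_r(k)$ exactly:
\[
s_r(k+1)-s_r(k)=\theta'f_r(K\theta')+\Bigl[\theta'\sum_{i\le k}f_r(i\theta')-\theta\sum_{i\le k}f_r(i\theta)\Bigr].
\]
The bracket is bounded by $(\theta'-\theta)$ times $\partial_t\bigl(t\sum f_r(it)\bigr)$, which equals $t^{-1}\cdot t\sum (f_r+uf_r')(it)=t^{-1}\,t\sum((r+1)f_r-f_{r+1})(it)$ and is $O(1/\theta)$. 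Combined with $\theta'-\theta=O(\theta^3)$, each $s_r$ therefore moves by $O(\theta^2)+\theta f_r(\nu)=O(\theta q\cdot\mathrm{poly}(\nu))$. The changes in $f_r(\nu)$ come from Step 2. Since every term of $M_2-1$ carries at least a factor $\theta$ relative to these increments, $|\Delta\log M_2|\le C\theta^2 q\,\mathrm{poly}(\nu)$. In practice I would bound $\Delta\log M_2$ through the gradient of $\log M_2$ with respect to $(\theta,\nu,s_r)$ over the box enclosures used in Theorem~A, multiplied by these increment bounds. This would be certified in ball arithmetic on the same $(\theta,\nu')$ cover as Region~I, together with a monotone majorant in $\sqrt\theta$ for $\theta\le0.002$ as in Region~II.

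\emph{Step 4: assembly.} Summing the three contributions gives $L_{2,k}-L_{2,k+1}\ge\Sigma_{\min}\theta q$ with the minimum of the certified lower bound over the cover. Combining this with $\theta'q'\le1.01\theta q$ gives $\Sigma_{\min}/1.01\ge0.9100$. Finally, $0.91>2\cdot0.4282$ is arithmetic.
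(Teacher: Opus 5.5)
Your proposal follows essentially the same route as the paper: the decomposition $(\theta'-\theta)+\log\frac{1-q'}{1-q}-(\log M_2'-\log M_2)$, localising $\theta'$ by the mean value theorem, exact increments of $s_r$ via $\partial_ts_r=((1+r)s_r-s_{r+1})/t$ plus the new term $\theta'f_r(K\theta')$ (this is Lemma~\ref{lem:B1}), and a gradient-times-increment bound on $\log M_2$, certified on boxes for $\theta\ge0.002$ and by a monotone majorant in $\sqrt\theta$ below that. Two small points need tightening: first, controlling $B$ on $[\theta,\theta']$ presupposes that $\theta'$ is already known to be close to $\theta$, which the paper settles beforehand by checking $S_{k+1}((1+\tau)\theta)<m-1$ with $\tau=0.01$; second, the mean value bound for $\log M_2$ must be taken over a convex box containing both the $k$ and the $k+1$ parameter points (the paper's $\mathcal H$), not just over the Theorem~A boxes.
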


\subsection{Decomposition}
\begin{equation}\label{eq:stepdec}
  L_{2,k}-L_{2,k+1}=(\theta'-\theta)+\log\frac{1-q'}{1-q}-\bigl(\log M_2'-\log M_2\bigr).
\end{equation}
We shall see that $\theta'>\theta$, so the first term is positive and $q'=e^{-(K+1)\theta'}<qe^{-\theta}$. Hence
\[
  \log\frac{1-q'}{1-q}\ \ge\ T_2:=\log\Bigl(1+\frac{q(1-e^{-\theta})}{1-q}\Bigr),
\]
which is $\theta q(1+O(\theta+q))$, and the step is at least $T_2-|\log M_2'-\log M_2|$. The work is to bound $\log M_2'-\log M_2$, for which we
need to control how $(\theta,\nu,b,s_3,s_4,s_5)$ moves from $k$ to $k+1$.

\subsection{Localising the next saddle point}
Let $B_k(t):=-S_k'(t)=t^{-3}b(k,t)$ with $b(k,t):=t\sum_{i\le k}f_2(it)$. Note $S_{k+1}(t)-S_k(t)=K/(e^{Kt}-1)$, which equals $c_1-1$
at $t=\theta$. Fix $\tau=0.01$ and let $b_{\min}:=F_2(\nu-\theta)-(1+\tau)\theta\TV(f_2)$. By Corollary~\ref{cor:sr} and monotonicity of
$F_2$, $b(k,t)\ge b_{\min}$ for $t\in[\theta,(1+\tau)\theta]$.

\begin{lemma}\label{lem:B1}
\begin{enumerate}[label=(\roman*)]
\item If $\tau b_{\min}>(1+\tau)^3c_1\theta^2$, then $\theta<\theta'\le(1+\tau)\theta$.
\item Under (i), $d\theta:=\theta'-\theta\le(1+\tau)^3c_1\theta^3/b_{\min}$.
\item $d\nu:=\nu_{k+1}-\nu=\theta'+K\,d\theta\in\bigl(0,(1+\tau)\theta+(\nu/\theta)\,d\theta\bigr]$.
\item For $r=2,\dots,5$, with $\bar s_r:=r!\zeta_2+(1+\tau)\theta\TV(f_r)$,
\[
  |s_r(k+1,\theta')-s_r(k,\theta)|\le d\theta\,\frac{(1+r)\bar s_r+\bar s_{r+1}}{\theta}+(1+\tau)\theta\sup_{u\in[\nu,\nu+d\nu]}f_r(u).
\]
\end{enumerate}
\end{lemma}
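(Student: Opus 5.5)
The plan is to compare the two saddle-point equations directly. The saddle point $\theta'$ for index $k+1$ solves $S_{k+1}(\theta')=m-1$, while $S_k(\theta)=m$. Consider the decreasing function $\Phi(t):=S_{k+1}(t)-(m-1)$. At $t=\theta$,
\[
  \Phi(\theta)=S_k(\theta)+(c_1-1)-m+1=c_1>0 .
\]
Hence $\theta'>\theta$, because $\Phi$ is strictly decreasing. This gives the left inequality in (i).

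For the right inequality in (i), I will show $\Phi((1+\tau)\theta)<0$. Write
\[
  S_{k+1}(t)-S_{k+1}(\theta)=-\int_\theta^t B_{k+1}(u)\dd u\le-\int_\theta^tB_k(u)\dd u,
\]
since $B_{k+1}\ge B_k$ termwise. On $[\theta,(1+\tau)\theta]$ we have $B_k(u)=u^{-3}b(k,u)\ge b_{\min}/((1+\tau)\theta)^3$, using the stated lower bound for $b(k,t)$. This lower bound comes from Corollary~\ref{cor:sr} applied at the point $t$: there $kt\ge k\theta=\nu-\theta$, $F_2$ is monotone, and the variation term is at most $t\TV(f_2)\le(1+\tau)\theta\TV(f_2)$. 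Integrating gives
\[
  \Phi((1+\tau)\theta)\le c_1-\tau\theta\, b_{\min}/((1+\tau)\theta)^3 ,
\]
which is negative exactly under the hypothesis of (i).

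For (ii), apply the same estimate between $\theta$ and $\theta'$, both of which lie in the interval by (i):
\[
  0=\Phi(\theta')\le c_1-d\theta\, b_{\min}/((1+\tau)^3\theta^3).
\]
Rearranging yields (ii). Item (iii) is an identity: $\nu_{k+1}-\nu=(K+1)\theta'-K\theta=\theta'+K\,d\theta$. Both terms are positive, and using $\theta'\le(1+\tau)\theta$ and $K=\nu/\theta$ gives the stated upper bound.

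For (iv), split the difference into a change of $t$ at fixed $k+1$ and a change of index at fixed $t=\theta$:
\[
  s_r(k+1,\theta')-s_r(k,\theta)=\bigl[s_r(k+1,\theta')-s_r(k+1,\theta)\bigr]+\theta f_r(K\theta).
\]
The second term is $\theta f_r(\nu)\le(1+\tau)\theta\sup_{[\nu,\nu+d\nu]}f_r$, since $f_r\ge0$.

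For the first term, I will instead use the cleaner split with the new term taken at $\theta'$. That is, write it as $\bigl[s_r(k,\theta')-s_r(k,\theta)\bigr]+\theta' f_r(K\theta')$. Here $K\theta'\in[\nu,\nu+d\nu]$ because $K\theta'\le(K+1)\theta'=\nu+d\nu$, and $\theta'\le(1+\tau)\theta$. So the last term is again bounded by $(1+\tau)\theta\sup_{[\nu,\nu+d\nu]}f_r$.

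It remains to bound the $t$-derivative of $s_r(k,t)=t\sum_{i\le k}f_r(it)$. Using $uf_r'(u)=rf_r(u)-f_{r+1}(u)$,
\[
  \partial_t s_r(k,t)=t^{-1}\Bigl[(1+r)s_r(k,t)-s_{r+1}(k,t)\Bigr].
\]
Its absolute value is at most $\bigl((1+r)\bar s_r+\bar s_{r+1}\bigr)/t$. Here I bound $s_r(k,t)\le r!\zeta_2+t\TV(f_r)\le\bar s_r$ for $t\le(1+\tau)\theta$, by Corollary~\ref{cor:sr} at $t$ together with Lemma~\ref{lem:Fr}. Since $1/t\le1/\theta$ on $[\theta,\theta']$, the mean value theorem gives the first term of (iv).

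The main obstacle is only bookkeeping: Corollary~\ref{cor:sr} is stated at the saddle point, so it must be re-applied at general $t\in[\theta,(1+\tau)\theta]$. This is legitimate because Lemma~\ref{lem:riemann} holds for any $t>0$; it then accounts for the $(1+\tau)$ factors in $b_{\min}$ and $\bar s_r$.
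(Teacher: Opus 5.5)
Your proof is correct and follows essentially the same route as the paper. For (i)--(ii) you use $S_{k+1}(\theta)=m+c_1-1$ and the bound $B_{k+1}\ge B_k\ge b_{\min}/((1+\tau)\theta)^3$ on $[\theta,(1+\tau)\theta]$, integrating where the paper applies the mean value theorem; for (iv) you use the split $s_r(k+1,\theta')=s_r(k,\theta')+\theta'f_r(K\theta')$ with $\partial_t s_r(k,t)=((1+r)s_r-s_{r+1})/t$, and the first split you started and then abandoned in (iv) can simply be deleted.
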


\begin{proof}
(i) $S_{k+1}(\theta)=m+c_1-1>m-1=S_{k+1}(\theta')$, so $\theta'>\theta$. On $[\theta,(1+\tau)\theta]$,
$-S_{k+1}'\ge B_k(t)\ge b_{\min}/((1+\tau)\theta)^3$, so $S_{k+1}((1+\tau)\theta)\le m+c_1-1-\tau b_{\min}/((1+\tau)^3\theta^2)<m-1$.
(ii) By the mean value theorem $c_1=S_{k+1}(\theta)-S_{k+1}(\theta')=B_{k+1}(\xi)\,d\theta\ge b_{\min}\,d\theta/((1+\tau)\theta)^3$.
(iii) $\nu_{k+1}-\nu=(K+1)\theta'-K\theta$ and $K=\nu/\theta$.
(iv) $s_r(k+1,\theta')=s_r(k,\theta')+\theta'f_r(K\theta')$ with $K\theta'\in[\nu,\nu+d\nu]$. From $uf_r'=rf_r-f_{r+1}$ we get
$\partial_ts_r(k,t)=((1+r)s_r(k,t)-s_{r+1}(k,t))/t$, and $0\le s_r(k,t)\le\bar s_r$ for $t\le(1+\tau)\theta$.
\end{proof}

\subsection{Proof of Theorem B}
\emph{Region I ($0.002\le\theta\le\theta_A$).} We cover $(\theta,\nu')$ by $8526$ boxes ($\theta$-ratio $1.02$, $\nu'$-width $0.02$, clipped to
$\nu\ge3.1158$). On each box the program asserts the hypothesis of Lemma~\ref{lem:B1}(i) and encloses $d\theta$, $d\nu$ and the
increments of $s_r$. It then forms a product box $\mathcal H\subset\RR^6$ in the variables $(\theta,\nu,b,s_3,s_4,s_5)$:
$\theta\in[\theta,(1+\tau)\theta]$, $\nu\in[\nu,\nu+d\nu]$ and
$s_r\in[F_r(\nu-\theta)-(1+\tau)\theta\TV_r,\ F_r(\nu+d\nu)+(1+\tau)\theta\TV_r]$. Both the point for $k$ and the point for $k+1$ lie in
$\mathcal H$ (Corollary~\ref{cor:sr} and Lemma~\ref{lem:B1}), and $\mathcal H$ is convex. The program checks $M_2>0$ on $\mathcal H$ and
encloses $\nabla M_2$ on $\mathcal H$ by forward-mode automatic differentiation in ball arithmetic, so by the mean value theorem along the
segment
\[
  |\log M_2'-\log M_2|\le\sum_j\sup_{\mathcal H}\Bigl|\frac{\partial_jM_2}{M_2}\Bigr|\,|v_j'-v_j|,
\]
where $v=(\theta,\nu,b,s_3,s_4,s_5)$ and $v'$ is the same vector at $k+1$, with $|v_j'-v_j|$ bounded by Lemma~\ref{lem:B1}. Together with $T_2/(\theta q)$ evaluated on the box this gives
$\Sigma:=(\text{step})/(\theta q)\ge0.91918$ on every box (Table~\ref{tab:certs}, B1).

\emph{Region II ($\theta\le0.002$).} The same graded-monomial method as in Section~\ref{sec:proofA} applies. At both points every
quantity is majorised by nonnegative combinations of $t^pq^{b_q}\nu^j$, using $\theta'^e\le(1+\tau)^e\theta^e$ ($e>0$), $q'\le q$,
$\nu_{k+1}\le\rho\nu$ with an explicit $\rho$, $f_r(u)\le(\rho\nu)^rqP_{r-1}(y)/(1-y)^r$ ($y=e^{-\nu_{\min}}$, $P_{r-1}$ the Eulerian
polynomial), and $|h'(u)|\le(u+1)e^{-u}/(1-e^{-u})^2$ for $c_1=1+h(\nu)/\theta$, $h(u)=u/(e^u-1)$. Products and quotients are
handled by $|x'y'-xy|\le|x'-x|\sup|y|+\sup|x|\,|y'-y|$ and $|1/N'-1/N|\le|N'-N|/N_{\min}^2$ (primes denoting values at $k+1$), and the main term by
$T_2/(\theta q)\ge(1-\theta/2)\bigl(1-\theta q/(2(1-q))\bigr)$. The result is $\Sigma\ge0.96039$ for all $\theta\le0.002$ and
$\nu'\in[-2.52,2.10]$, and also $|\log M_2|/\theta\le0.14215$ (Table~\ref{tab:certs}, B2).

Finally $\theta'q'\le1.01\,\theta q$ because $\theta'\le1.01\theta$ and $q'<q$, and $0.91918/1.01>0.9100>0.8564=2\Gamma_{\max}$. \qed

\subsection{Consequences in the window}
Let $k^\ast$ be the largest $k\in W$ with $L_{2,k}>0$, and $k^\ast:=\min(W\cap\ZZ)-1$ if there is no such $k$ (so $L_{2,k}\le0$ for every $k\in W$ with $k>k^\ast$). Call $k\in W$
\emph{undetermined} if $|L_{2,k}|\le\varepsilon_k$. If $k$ is not undetermined then $\operatorname{sign}L_k=\operatorname{sign}L_{2,k}$
by Theorem~\ref{thm:A}.

\begin{corollary}\label{cor:onek}
Let $n\ge10^5$. At most one $k\in W$ is undetermined, and if there is one it belongs to $\{k^\ast,k^\ast+1\}$. Hence the signs of
$L_k$ for $k\in W$, read left to right, have the form $+\cdots+-\cdots-$ (either block may be empty).
\end{corollary}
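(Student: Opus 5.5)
The plan is to combine Theorem~\ref{thm:A} and Theorem~\ref{thm:B} in the standard way. Write $\eta_k:=\theta_kq_k$. Theorem~\ref{thm:A} gives $\varepsilon_k\le\Gamma_{\max}\eta_k$ for every $k\in W$. Theorem~\ref{thm:B} gives
\[
  L_{2,k}-L_{2,k+1}\ge0.91\max(\eta_k,\eta_{k+1})>2\Gamma_{\max}\max(\eta_k,\eta_{k+1})\ge\varepsilon_k+\varepsilon_{k+1}
\]
whenever $k,k+1\in W$. In particular $L_2$ is strictly decreasing along the consecutive integers of $W$. Hence $L_{2,k}>0$ exactly for the $k\in W$ with $k\le k^\ast$, and $L_{2,k}\le0$ for $k>k^\ast$.

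\emph{Step 1: undetermined indices are adjacent to the sign change.} Suppose $k\in W$ is undetermined and $k\le k^\ast-1$. Then $k+1\le k^\ast$ lies in $W$ (since $W\cap\ZZ$ is an interval and $k^\ast\in W$), and $L_{2,k+1}>0$. This gives
\[
  L_{2,k}\ge L_{2,k+1}+\varepsilon_k+\varepsilon_{k+1}>\varepsilon_k,
\]
which contradicts $|L_{2,k}|\le\varepsilon_k$. Symmetrically, suppose $k\ge k^\ast+2$. Then $k-1\ge k^\ast+1$ lies in $W$ and $L_{2,k-1}\le0$, so
\[
  L_{2,k}\le L_{2,k-1}-\varepsilon_{k-1}-\varepsilon_k<-\varepsilon_k,
\]
again a contradiction. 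The edge case where $k^\ast=\min(W\cap\ZZ)-1$ is covered by the second argument: there $L_{2,k}\le0$ throughout $W$.

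\emph{Step 2: at most one undetermined index.} Suppose both $k^\ast$ and $k^\ast+1$ lie in $W$ and are undetermined. Then
\[
  L_{2,k^\ast}-L_{2,k^\ast+1}\le\varepsilon_{k^\ast}+\varepsilon_{k^\ast+1},
\]
which contradicts Theorem~\ref{thm:B}. This is the only place where the strict inequality $0.91>2\Gamma_{\max}$ is really used, and it must be applied with the $\max(\eta_k,\eta_{k+1})$ form of the step bound. That form is exactly what the factor $1.01$ in Theorem~\ref{thm:B} provides.

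\emph{Step 3: sign pattern.} For determined $k$, $\operatorname{sign}L_k=\operatorname{sign}L_{2,k}$ by Theorem~\ref{thm:A}. That sign is $+$ for $k\le k^\ast$ and $-$ for $k>k^\ast$ (with $L_{2,k}=0$ impossible for a determined $k$). So the determined indices already show the pattern $+\cdots+-\cdots-$. The single possible undetermined index sits at $k^\ast$ or $k^\ast+1$, which is the boundary between the blocks. Whatever its true sign, and including the possibility $L_k=0$ (which we count with the nonpositive block, consistent with weak unimodality), inserting it there preserves the form.

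The only delicate points are bookkeeping. One is that $W\cap\ZZ$ is an interval of consecutive integers. The other is the convention that $L_k=0$ is placed in the nonincreasing part, so that "sign" is read as $>0$ versus $\le0$. I do not expect a genuine obstacle.
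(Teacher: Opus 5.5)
Your proposal is correct and follows the paper's proof essentially step for step. Theorem~\ref{thm:B}, in its $\max(\theta_kq_k,\theta_{k+1}q_{k+1})$ form, together with $0.91>2\Gamma_{\max}$, rules out undetermined indices outside $\{k^\ast,k^\ast+1\}$ and rules out both of those being undetermined; a single boundary index then cannot create a second sign change, and your explicit handling of the degenerate $k^\ast$ and of a possible $L_k=0$ is only slightly more careful bookkeeping than the paper's.
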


\begin{proof}
For $k\le k^\ast-1$, Theorem~\ref{thm:B} gives $L_{2,k}\ge L_{2,k}-L_{2,k+1}>0.91\,\theta_kq_k>\Gamma_{\max}\theta_kq_k\ge\varepsilon_k$.
For $k\ge k^\ast+2$, $L_{2,k}\le L_{2,k}-L_{2,k-1}\le-0.91\,\theta_kq_k$. So only $k^\ast$ and $k^\ast+1$ can be undetermined. If both
were, then
\[
  L_{2,k^\ast}-L_{2,k^\ast+1}\le\varepsilon_{k^\ast}+\varepsilon_{k^\ast+1}\le2\Gamma_{\max}\max(\theta q)<0.91\max(\theta q),
\]
contradicting Theorem~\ref{thm:B}. The determined signs are $+$ for $k\le k^\ast$ and $-$ for $k\ge k^\ast+1$, and a single undetermined
index adjacent to the change cannot create a second change.
\end{proof}

\begin{remark}[Location of the mode]
Using the edge values of $\nu'$ after Lemma~\ref{lem:W} one finds $L_2/\theta\ge5.588$ at the left end of $W$ and
$L_2/\theta\le-0.505$ at the right end (Table~\ref{tab:certs}, B3), so the sign change of $L_2$, and hence the mode of the row, lies
inside $W$. This is not needed for unimodality.
\end{remark}

%======================================================================
\section{The regions D, CL, CR and E}\label{sec:CLCR}
%======================================================================

\subsection{A zeroth-order estimate}
Outside the window a much cruder version of Proposition~\ref{prop:eps} suffices. Let $c_0>0$ and $\varphi_0=c_0\theta$.

\begin{lemma}[Lemma C0]\label{lem:C0}
Suppose $\re\psi(\varphi)\le-(1-\delta)B\varphi^2/2$ for $|\varphi|\le\varphi_0$, with $0\le\delta<1$. Put
\[
  T_1=\frac{c_1\kappa_3}{2B^2},\quad T_2=\frac{c_2+c_1^2}{2B},\quad A_3=\kappa_3B^{-3/2},\quad w_0=\varphi_0\sqrt B,\quad
  \mathcal M=\sqrt{\frac B{2\pi}}\int_{\varphi_0\le|\varphi|\le\pi}|\chi| .
\]
If the denominator below is positive, then
\[
  \Bigl|\frac{R_k}{g_0}-1\Bigr|\le\rho:=\frac{T_1(1-\delta)^{-5/2}+T_2(1-\delta)^{-3/2}+2\mathcal M}
  {1-\erfc(w_0/\sqrt2)-\sqrt{2/\pi}\,A_3/\bigl(3(1-\delta)^2\bigr)-\mathcal M}.
\]
Moreover $\mathcal M\le\sqrt{2\pi B}\,\sup_{\varphi_0\le|\varphi|\le\pi}|\chi|$.
\end{lemma}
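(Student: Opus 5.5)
We start from the exact representation \eqref{eq:rep2}, $R_k/g_0-1=\tilde N/D$, and bound $|\tilde N|$ from above and $|D|$ from below, each divided by $\sqrt{2\pi/B}$. We split the range of integration into the central arc $|\varphi|\le\varphi_0$ and the minor arcs $\varphi_0\le|\varphi|\le\pi$.

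On the minor arcs, $|e^H-1|\le 2$ because $\re H\le0$ (Lemma~\ref{lem:Hder}). So the minor-arc contribution to $\tilde N$ is at most $2\int|\chi|$, and that to $D$ is at most $\int|\chi|$. After scaling these become $2\mathcal M$ and $\mathcal M$. The final claim $\mathcal M\le\sqrt{2\pi B}\sup|\chi|$ is immediate, since the arcs have length at most $2\pi$.

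For the numerator on the central arc, write $e^H-1=H_1+(e^H-1-H)+(H-H_1)$, with $H_1=\ii c_1\varphi$. By \ref{F1} and Lemma~\ref{lem:Hder}, $|e^H-1-H|\le|H|^2/2\le c_1^2\varphi^2/2$ and $|H-H_1|\le c_2\varphi^2/2$. The second-order piece is therefore bounded by $\frac{c_2+c_1^2}{2}\int|\chi|\varphi^2$. Using $|\chi|\le e^{-(1-\delta)B\varphi^2/2}$ and extending to $\RR$, this gives at most $\frac{c_2+c_1^2}{2}\sqrt{2\pi}\,B^{-1/2}\bigl((1-\delta)B\bigr)^{-3/2}$, which is $T_2(1-\delta)^{-3/2}$ after scaling.

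The linear piece $\ii c_1\int\chi\varphi$ needs cancellation. Since $\re\psi$ is even and $\im\psi$ is odd, only $\im\chi$ contributes, and we get
\[
  \int\chi\varphi=\ii\int e^{\re\psi}\sin(\im\psi)\,\varphi .
\]
By Corollary~\ref{cor:omega}, $|\sin\im\psi|\le\kappa_3|\varphi|^3/6$, so
\[
  \Bigl|c_1\int\chi\varphi\Bigr|\le\frac{c_1\kappa_3}{6}\int e^{-(1-\delta)B\varphi^2/2}\varphi^4 .
\]
The fourth Gaussian moment is $3\sqrt{2\pi}\,\bigl((1-\delta)B\bigr)^{-5/2}$. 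After scaling this gives exactly $\frac{c_1\kappa_3}{2B^2}(1-\delta)^{-5/2}=T_1(1-\delta)^{-5/2}$. This matching of constants is the step to check carefully.

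For the denominator on the central arc, take real parts:
\[
  \re\int_{|\varphi|\le\varphi_0}\chi=\int e^{\re\psi}\cos(\im\psi).
\]
We split this as
\[
  \int e^{-B\varphi^2/2}+\int\bigl(e^{\re\psi}-e^{-B\varphi^2/2}\bigr)+\int e^{\re\psi}\bigl(\cos(\im\psi)-1\bigr).
\]
The first term over $|\varphi|\le\varphi_0$ equals $\sqrt{2\pi/B}\,\bigl(1-\erfc(w_0/\sqrt2)\bigr)$. The third is bounded using $1-\cos y\le|y|$, giving at most $\int e^{-(1-\delta)B\varphi^2/2}\kappa_3|\varphi|^3/6$. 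The absolute third moment is $2\bigl(2/((1-\delta)B)\bigr)^2$, which after scaling gives $\sqrt{2/\pi}\,A_3/\bigl(3(1-\delta)^2\bigr)$. This matches the stated term.

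The main obstacle is the middle term $e^{\re\psi}-e^{-B\varphi^2/2}$, which must be shown not to be a net loss. I would try to use $\re\psi\ge-B\varphi^2/2$, or otherwise absorb this term. If that lower bound fails, I would instead use $|e^{\re\psi+\ii\im\psi}-e^{-B\varphi^2/2}|$ through $\omega$ and the $|\omega|\le\kappa_3|\varphi|^3/6$ bound from Corollary~\ref{cor:omega}, with $\max(1,e^{\re\omega})\le e^{\delta B\varphi^2/2}$ from \ref{F1}. That controls the whole difference $\chi-g$ by the same cubic term, and yields the same $A_3$ contribution.

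Subtracting the minor-arc contribution $\mathcal M$ from the denominator and dividing the numerator bound by the denominator bound then gives $\rho$.
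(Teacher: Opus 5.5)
Your proposal is correct and is essentially the paper's proof. It uses the same central/minor-arc split and the same symmetry cancellation, which turns $c_1\int\chi\varphi$ into the $T_1$ term via $|\sin\im\psi|\le\kappa_3|\varphi|^3/6$. It also has the same remainder $(c_2+c_1^2)\varphi^2/2$; the paper gets this from $(e^H)''=(H''+H'^2)e^H$ rather than your three-term split. Your fallback for the denominator, $|\chi-g|=g\,|e^\omega-1|\le\frac{\kappa_3|\varphi|^3}{6}e^{-(1-\delta)B\varphi^2/2}$, is exactly what the paper does. Your first option also works: $\log(1+y)\le y$ and $\sin^2y\le y^2$ give $S(\varphi)\le B\varphi^2$, i.e.\ $\re\psi\ge-B\varphi^2/2$, so the middle term is nonnegative and can be dropped.

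Two arithmetic slips should be fixed, though neither changes your final constants. First, $\int_\RR|\varphi|^3e^{-a\varphi^2/2}\dd\varphi=(2/a)^2$, not $2(2/a)^2$; the latter would double the $A_3$ term. Second, the $\varphi^2$-integral is $\sqrt{2\pi}\,((1-\delta)B)^{-3/2}$, without the extra factor $B^{-1/2}$.
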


\begin{proof}
We use~\eqref{eq:rep2} and measure all integrals in units of $\sqrt{2\pi/B}$.
\emph{Numerator, central arc.} Since $(e^H)''=(H''+H'^2)e^H$ and $\re H\le0$, Taylor's theorem gives $e^H-1=\ii c_1\varphi+E(\varphi)$ with
$|E|\le(c_2+c_1^2)\varphi^2/2$. By $\chi(-\varphi)=\overline{\chi(\varphi)}$, $\int_{|\varphi|\le\varphi_0}\chi\varphi=2\ii\int_0^{\varphi_0}\varphi\im\chi$,
and $|\im\chi|=e^{\re\psi}|\sin\im\psi|\le e^{-(1-\delta)B\varphi^2/2}\kappa_3|\varphi|^3/6$ by Corollary~\ref{cor:omega}. Integrating
$\varphi^4$ and $\varphi^2$ against $e^{-(1-\delta)B\varphi^2/2}$ over $\RR$ gives the terms $T_1(1-\delta)^{-5/2}$ and $T_2(1-\delta)^{-3/2}$.
\emph{Numerator, minor arcs.} $|e^H-1|\le2$ gives $2\mathcal M$.
\emph{Denominator.} $D$ is real and positive. On $|\varphi|\le\varphi_0$ write $\chi=e^{-B\varphi^2/2}e^\omega$; by
Corollary~\ref{cor:omega} and~\ref{F1}, $|e^\omega-1|\le|\omega|e^{\delta B\varphi^2/2}\le\frac{\kappa_3|\varphi|^3}6e^{\delta B\varphi^2/2}$, whose
weighted integral is $\sqrt{2/\pi}A_3/(3(1-\delta)^2)$. Also $\int_{|\varphi|\le\varphi_0}e^{-B\varphi^2/2}=\sqrt{2\pi/B}\,(1-\erfc(w_0/\sqrt2))$,
and the minor arcs contribute at most $\mathcal M$.
\end{proof}

\begin{lemma}[Admissible $\delta$]\label{lem:delta}
Both $\delta_T:=\kappa_4\varphi_0^2/(12B)$ and, when $c_0k\theta<2$,
$\delta_C:=1-\bigl(1-(c_0k\theta)^2/12\bigr)\log(1+c_0^2)/c_0^2$ satisfy the hypothesis of Lemma~\ref{lem:C0} (when they are $<1$).
\end{lemma}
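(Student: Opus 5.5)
The two choices of $\delta$ need separate arguments. For $\delta_T$ we use the quartic Taylor bound of Corollary~\ref{cor:omega}. For $\delta_C$ we work directly with the exact formula~\eqref{eq:modchi}, comparing it term by term with $B$. Throughout, $|\varphi|\le\varphi_0=c_0\theta$. Since $\re\psi$ is even, we may take $\varphi\ge0$.

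\emph{The case $\delta_T$.} By Corollary~\ref{cor:omega},
\[
\re\psi(\varphi)\le-\frac{B\varphi^2}2+\frac{\kappa_4\varphi^4}{24}.
\]
Since $\varphi^2\le\varphi_0^2$, we have $\kappa_4\varphi^4/24\le\kappa_4\varphi_0^2\varphi^2/24$. Therefore
\[
\re\psi(\varphi)\le-\frac{B\varphi^2}{2}\Bigl(1-\frac{\kappa_4\varphi_0^2}{12B}\Bigr)=-(1-\delta_T)\frac{B\varphi^2}2 .
\]
This is exactly the hypothesis of Lemma~\ref{lem:C0}, provided $\delta_T<1$.

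\emph{The case $\delta_C$.} By~\eqref{eq:modchi},
\[
\re\psi=\log|\chi|=-\tfrac12 S(\varphi).
\]
Also, from~\eqref{eq:kappa} with $\Li_{-1}(y)=y/(1-y)^2$, we have $B=\sum_{i\le k}i^2/(4\sinh^2(i\theta/2))$. It therefore suffices to prove, for each $i\le k$, the termwise inequality
\[
\log\Bigl(1+\frac{\sin^2(i\varphi/2)}{\sinh^2(i\theta/2)}\Bigr)\ \ge\ (1-\delta_C)\,\frac{i^2\varphi^2}{4\sinh^2(i\theta/2)} .
\]
Summing over $i$ then gives $S\ge(1-\delta_C)B\varphi^2$, which is the required bound.

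To prove the termwise inequality, put $y=i\varphi/2$. Then $y\le c_0k\theta/2<1$, so $\sin y\ge y(1-y^2/6)\ge0$ and hence $\sin^2y\ge y^2(1-y^2/3)$. Moreover $y^2/3=(i\varphi)^2/12\le(c_0k\theta)^2/12$. Writing $\epsilon:=1-(c_0k\theta)^2/12\in(0,1]$, this gives
\[
\frac{\sin^2(i\varphi/2)}{\sinh^2(i\theta/2)}\ \ge\ \epsilon\,u_i,\qquad u_i:=\frac{(i\varphi/2)^2}{\sinh^2(i\theta/2)}=\Bigl(\frac{\varphi}{\theta}\Bigr)^2f_2(i\theta).
\]
Since $f_2\le1$ and $\varphi/\theta\le c_0$, we get $u_i\le c_0^2$.

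Two monotonicity steps now finish the argument. First, by~\ref{F2} with $t=\epsilon$, we have $\log(1+\epsilon u_i)\ge\epsilon\log(1+u_i)$. Second, $u\mapsto\log(1+u)/u$ is decreasing, and $u_i\le c_0^2$, so $\log(1+u_i)\ge u_i\log(1+c_0^2)/c_0^2$. Combining the two,
\[
\log(1+\epsilon u_i)\ \ge\ \epsilon\,\frac{\log(1+c_0^2)}{c_0^2}\,u_i=(1-\delta_C)\,\frac{i^2\varphi^2}{4\sinh^2(i\theta/2)} ,
\]
which is the termwise inequality.

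The only step needing care is keeping every argument inside the range where the elementary inequalities hold. This is exactly what the hypothesis $c_0k\theta<2$ provides: it gives $y<1$, so that $\sin y\ge0$ and $\epsilon>0$. With that secured, the rest is routine.
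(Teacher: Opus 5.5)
Your proof is correct and follows the paper's argument essentially step for step. For $\delta_T$ you use the quartic Taylor bound of Corollary~\ref{cor:omega}; for $\delta_C$ you use the exact formula~\eqref{eq:modchi}, the bound $\sin^2y\ge y^2(1-y^2/3)$, and two concavity steps with $u_i\le c_0^2$ and $\sum_i u_i=B\varphi^2$ (your monotonicity of $\log(1+u)/u$ is the paper's second application of~\ref{F2}).
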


\begin{proof}
For $\delta_T$ use Corollary~\ref{cor:omega}: $\re\psi\le-B\varphi^2/2+\kappa_4\varphi_0^2\varphi^2/24$. For $\delta_C$, by~\eqref{eq:modchi},
$-2\re\psi=\sum_i\log(1+\sin^2(i\varphi/2)/\sinh^2(i\theta/2))$. For $i\le k$ and $|\varphi|\le c_0\theta$ the angle $y=i|\varphi|/2$ is
$<1$, and $\sin^2y\ge(y-y^3/6)^2\ge y^2(1-y^2/3)\ge\lambda y^2$ with $\lambda:=1-(c_0k\theta)^2/12$. With
$a_i:=(i\varphi/2)^2/\sinh^2(i\theta/2)\le c_0^2$, two applications of~\ref{F2} give
$\log(1+\lambda a_i)\ge\lambda\log(1+a_i)\ge\lambda a_i\log(1+c_0^2)/c_0^2$, and $\sum_ia_i=B\varphi^2$.
\end{proof}

From $\log R_k=-\theta-\log(1-q)+\log(R_k/g_0)$, $q\le-\log(1-q)\le q/(1-q)$ and $1-q\le\nu$ we obtain, whenever $\rho<1$,
\begin{equation}\label{eq:C0cons}
  \log R_k\le-\theta+\frac q{1-q}+\rho,\qquad \log R_k\ge-\theta+q+\log(1-\rho),\qquad \log R_k\ge-\theta-\log\nu+\log(1-\rho).
\end{equation}

\subsection{A bound uniform in \texorpdfstring{$\theta$}{theta}}
\begin{lemma}[Lemma C1]\label{lem:C1}
Fix $0<v_1<v_2\le\infty$, $c_0>0$ and $\bar\theta>0$. Suppose $\Psi>5\bar\theta$ is such that $\theta S(\varphi)\ge\Psi$ for
$c_0\theta\le|\varphi|\le\pi$ whenever $\theta\le\bar\theta$ and $k\theta\in[v_1,v_2]$. Then there is an explicit number
$X=X(\bar\theta,v_1,v_2,c_0)$ such that $\rho\le\theta X$ for every saddle point $\theta\le\bar\theta$ with $k\theta\in[v_1,v_2]$.
\end{lemma}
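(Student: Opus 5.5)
We feed Lemma~\ref{lem:C0} with the admissible $\delta=\delta_T$ of Lemma~\ref{lem:delta} and bound every ingredient of $\rho$ in the scaled variables $(\theta,\nu,b,s_r)$. The numerator terms should be $O(\theta)$ and the denominator bounded below uniformly. Take $\varphi_0=c_0\theta$, and write $B=b/\theta^3$ and $\kappa_r=s_r/\theta^{r+1}$.

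The scalings of the individual terms are as follows.
\begin{itemize}
\item $T_1=c_1\kappa_3/(2B^2)=(c_1\theta)\,s_3\,\theta/(2b^2)$.
\item $T_2=(c_2+c_1^2)/(2B)=\bigl(f_2(\nu)\theta+(\theta+f_1(\nu))^2\bigr)\theta/(2b)$, using $c_2\theta^2=f_2(\nu)$ and $c_1\theta=\theta+f_1(\nu)$ from~\eqref{eq:scaled}.
\item $A_3=s_3\theta^{1/2}/b^{3/2}$.
\item $\delta_T=c_0^2s_4\theta/(12b)$.
\item $w_0^2=c_0^2b/\theta$.
\end{itemize}
Since $f_1,f_2\le1$, we have $c_1\theta\le1+\bar\theta$.

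To make these uniform we need an upper bound on $s_r$ and a lower bound on $b$ over the allowed range. Corollary~\ref{cor:sr} gives $s_r\le r!\zeta_2+\bar\theta\TV(f_r)$. It also gives $b\ge F_2(k\theta)-\theta\TV(f_2)\ge F_2(v_1)-\bar\theta\TV(f_2)=:b_-$, because $k\theta\ge v_1$ and $F_2$ is nondecreasing (Lemma~\ref{lem:Fr}). I will assume, and assert as a hypothesis in the explicit evaluation, that $b_->0$; if this fails, the constant $X$ is simply taken to be $+\infty$.

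With these bounds the terms are controlled as follows.
\begin{itemize}
\item $T_1,T_2\le\theta X_1$ for an explicit constant $X_1$.
\item $\delta_T\le\bar\delta:=c_0^2\bar s_4\bar\theta/(12b_-)$. We require $\bar\delta<1$; this is used in the factors $(1-\delta)^{-5/2}$ and $(1-\delta)^{-3/2}$.
\item The cubic term in the denominator is $\le\sqrt{2/\pi}\,\bar s_3\bar\theta^{1/2}/(3b_-^{3/2}(1-\bar\delta)^2)$. This is $O(\theta^{1/2})$, and it only needs to stay below a fixed number, so we use $\theta\le\bar\theta$.
\item $\erfc(w_0/\sqrt2)\le\erfc(c_0\sqrt{b_-/(2\bar\theta)})$.
\end{itemize}

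For the minor arcs we use $\mathcal M\le\sqrt{2\pi B}\sup|\chi|$ together with $|\chi|=e^{-S/2}\le e^{-\Psi/(2\theta)}$. This gives $\mathcal M\le\sqrt{2\pi\bar b}\,\theta^{-3/2}e^{-\Psi/(2\theta)}$. To get $\mathcal M\le\theta X_2$, we show that $\theta^{-5/2}e^{-\Psi/(2\theta)}$ is increasing in $\theta$ for $\theta<\Psi/5$. Its logarithmic derivative is $(\Psi/2-5\theta/2)/\theta^2>0$, and this is exactly where the hypothesis $\Psi>5\bar\theta$ enters. So the expression is bounded by its value at $\bar\theta$, namely $X_2=\sqrt{2\pi\bar b}\,\bar\theta^{-5/2}e^{-\Psi/(2\bar\theta)}$ with $\bar b=2\zeta_2+\bar\theta\TV(f_2)$.

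Collecting terms, the denominator is at least the explicit number $d_-=1-\erfc(\cdot)-(\text{cubic bound})-\bar\theta X_2$. When $d_->0$ we set
\[
X=\bigl(X_1\max((1-\bar\delta)^{-5/2},(1-\bar\delta)^{-3/2})+2X_2\bigr)/d_-.
\]
The main obstacle is that uniformity needs $b$ bounded below and $\delta_T<1$, $d_->0$. These depend on $v_1$ and $\bar\theta$ and are not automatic. I will build them into the definition of $X$, as conditions to be checked in ball arithmetic for the parameters actually used, rather than prove them in general. If $v_2=\infty$ is allowed, nothing depends on $v_2$ except through $b\ge b_-$, so no extra care is needed there.
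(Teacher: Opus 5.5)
Your architecture matches the paper's. You bound every ingredient of $\rho$ from Lemma~\ref{lem:C0} in the scaled variables, bound $b$ from below via Corollary~\ref{cor:sr}, and use $\Psi>5\bar\theta$ to make $\theta^{-5/2}e^{-\Psi/(2\theta)}$ increasing, so that everything is worst at $\bar\theta$.

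\textbf{The gap: the scaling of $\delta_T$ is wrong.} The correct computation is $\delta_T=\kappa_4\varphi_0^2/(12B)=(s_4\theta^{-5})(c_0^2\theta^2)/(12\,b\,\theta^{-3})=c_0^2s_4/(12b)$, with no factor $\theta$. Equivalently, $\delta_T=A_4w_0^2/12$ with $A_4=s_4\theta/b^2$ and $w_0^2=c_0^2b/\theta$. So your $\bar\delta=c_0^2\bar s_4\bar\theta/(12b_-)$ is not an upper bound for $\delta_T$. For large $v$ one has $\delta_T\approx c_0^2\cdot24\zeta_2/(12\cdot2\zeta_2)=c_0^2$, whatever $\theta$ is. The repair is easy: take $\bar\delta:=c_0^2\bar s_4/(12b_-)$, which is still uniform in $\theta\le\bar\theta$. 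But then $\bar\delta<1$ is a real restriction on $c_0$ (roughly $c_0<1$), not a formality. That is why Lemma~\ref{lem:delta} also supplies $\delta_C$, and why the paper takes the better of $\delta_T,\delta_C$ over several values of $c_0$. There is also a harmless slip in $T_2$: it equals $\theta\bigl(f_2(\nu)+(c_1\theta)^2\bigr)/(2b)$, with no extra $\theta$ on $f_2(\nu)$.

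\textbf{Your bounds are too crude for the applications.} With $\delta_T$ fixed, your argument proves the statement as written. However, apart from $b_-$, your ingredient bounds do not depend on $v$, so your $X$ is much weaker than the paper's. The paper keeps the $v$-dependence:
\begin{itemize}
\item $c_1\theta=\theta+f_1(\nu)\le\theta+f_1(v_1)$ and $c_2\theta^2\le f_2(v_1)$, both exponentially small for large $v_1$;
\item $s_r\le\min\bigl(F_r(v_2)+\theta\TV(f_r),\,v_2\sup f_r\bigr)$;
\item $b\ge\max\bigl(F_2(v_1)-\theta,\,v_1f_2(v_2)\bigr)$.
\end{itemize}
It then checks monotonicity in $\theta$ before evaluating at $\bar\theta$.

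This difference matters where the lemma is used. With your bound $c_1\theta\le1+\bar\theta$, your $X_1$ is already about $0.8$ at $v_1=6.198$. Proposition~\ref{prop:CR} needs roughly $X<1-e^{-\nu_{CR}}\approx0.65$, and the paper certifies $X\le0.0321$. In the first cell $[0.1,0.115]$ of Proposition~\ref{prop:CLb}, your bound $\bar s_3\approx6\zeta_2$ makes the cubic denominator term exceed $3$, so $d_-<0$ and your $X=+\infty$. The paper's bound $s_3\le v_2\sup f_3\approx0.23$ makes that term only about $0.1$ there.
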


\begin{proof}
Write $v=k\theta$. We bound the ingredients of Lemma~\ref{lem:C0}:
\begin{itemize}
\item $s_r\le\min\bigl(F_r(v_2)+\theta\TV(f_r),\ v\sup f_r\bigr)$ for $r=3,4$ (Corollary~\ref{cor:sr}, and each term $\theta f_r(i\theta)\le\theta\sup f_r$);
  with $\sup f_3\le2.0302009$, $\sup f_4\le6.1253939$;
\item $b\ge\max\bigl(F_2(v_1)-\theta,\ v_1f_2(v_2)\bigr)$ and $b\le F_2(v_2)$, by comparing the decreasing $f_2\le1$ with its integral;
\item $c_1\theta=\theta+f_1(\nu)\le\theta+f_1(v_1)$ and $c_2\theta^2=f_2(\nu)\le f_2(v_1)$, as $\nu\ge v\ge v_1$;
\item $\sup|\chi|\le e^{-\Psi/(2\theta)}$, hence $\mathcal M\le\sqrt{2\pi b}\,\theta^{-3/2}e^{-\Psi/(2\theta)}$.
\end{itemize}
In scaled form $T_1=\theta(c_1\theta)s_3/(2b^2)$, $T_2=\theta(c_2\theta^2+(c_1\theta)^2)/(2b)$, $A_3=s_3\theta^{1/2}b^{-3/2}$,
$w_0=c_0(b/\theta)^{1/2}$ and $\delta_T=c_0^2s_4/(12b)$. The quantity $\rho$ is increasing in $T_1,T_2,\mathcal M,\delta,A_3$ and
$\erfc(w_0/\sqrt2)$ (its numerator increases and its denominator decreases in each). After substituting the bounds above,
$T_1/\theta$, $T_2/\theta$, $A_3$ and $\delta$ are nondecreasing in $\theta$ and $w_0$ is nonincreasing; this is checked by computing
each derivative symbolically as a polynomial with nonnegative coefficients over a positive denominator (Table~\ref{tab:certs}, M).
Finally $\frac{d}{d\theta}\log(\mathcal M/\theta)=(\Psi-5\theta)/(2\theta^2)>0$ for $\theta<\Psi/5$. Hence $\rho/\theta$ is bounded by its
value with all ingredients taken at $\bar\theta$, which defines $X$. (The program takes the best of $c_0\in\{0.4,0.6,0.8,1,1.25,1.5,2\}$
and of $\delta_T,\delta_C$.)
\end{proof}

In this section $\Psi$ is provided by Lemma~\ref{lem:minor} with $u_{\mathrm{top}}=v_1$ (since $(k+1)\theta\ge k\theta\ge v_1$).

\subsection{The right tail CR}
\begin{proposition}\label{prop:CR}
Let $n\ge10^5$ and $\mu+2\beta\le k\le\lceil n/2\rceil$. Then $\log R_k\le-0.614\,\theta<0$.
\end{proposition}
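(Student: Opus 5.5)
We use the first inequality of \eqref{eq:C0cons}, $\log R_k\le-\theta+q/(1-q)+\rho$, and show that $q/(1-q)+\rho\le0.386\,\theta$ throughout $CR$. Two facts drive the argument. By \eqref{eq:CWconst}, $k\ge\mu+2\beta$ gives $\nu'\ge\nu_{CR}=1.04305$. Hence
\[
q=\theta e^{-\nu'}\le e^{-1.04305}\,\theta\approx0.352\,\theta,
\]
and so $q/(1-q)\le0.353\,\theta$ once $\theta$ is small. This leaves a margin of roughly $0.03\,\theta$, which $\rho$ must fit inside. Everything therefore reduces to proving $\rho\le X\theta$ with $X$ small, via Lemma~C1, on a finite partition of the range of $v=k\theta$.

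The first step is to control $\theta$ and $v$ on $CR$. Since $\theta_k$ increases with $k$, the largest value occurs at $k=\lceil n/2\rceil$. There $m\approx n/2$ and $\theta^2 m\le\zeta_2$, so $\theta\le\sqrt{2\zeta_2/n}\cdot(1+o(1))\approx 0.0057$ for $n\ge10^5$. As $\theta$ grows past the window value, it also matters that $\theta^2S_k(\theta)=\theta^2 m$ together with the Riemann-sum lower bound $F_1(v)-\theta$ pins $v$. At the left end, $v\ge\nu'+\log(1/\theta)-\theta\ge\nu_{CR}+\log(1/\bar\theta)$, which is about $6.2$. At the right end $v$ can be as large as order $\sqrt n$, so we take $v_2=\infty$. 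We then split $[v_1,\infty)$ into finitely many intervals $[v_1^{(j)},v_2^{(j)}]$ with $\bar\theta$ uniform. For each interval we obtain $\Psi$ from Lemma~\ref{lem:minor} with $u_{\mathrm{top}}=v_1^{(j)}$ and $c_0\in\{0.4,\dots,2\}$, and we check $\Psi>5\bar\theta$. Lemma~C1 then returns an explicit $X_j$.

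The ingredients in Lemma~C1 are all uniformly tiny on this range. We have $c_1\theta\le\theta+f_1(v_1)$ with $f_1(6)\approx0.015$, together with $b\approx2\zeta_2$ and $s_3\le 6\zeta_2$. Consequently $T_1\lesssim\theta\cdot0.02\cdot 6\zeta_2/(2(2\zeta_2)^2)$ and $T_2$ are of size $10^{-2}\theta$ or smaller, and $\mathcal M$ is exponentially small. I expect $X\lesssim0.03$ near $v_1$, and much smaller for large $v$ because $f_1(v)$ and $f_2(v)$ decay.

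The combination is then
\[
\log R_k\le-\theta+0.353\,\theta+X\theta\le-0.614\,\theta,
\]
which requires $X\le0.033$. The main obstacle is that margin at the left edge of $CR$: near $v\approx6$, $T_1$ is not negligible. If a single interval fails there, we would sharpen the bound by not discarding the $\nu'$-dependence. Instead we cover $\nu'\ge1.04305$ in cells, bounding $q/(1-q)$ by $\theta e^{-\nu'}/(1-q)$ and $c_1\theta$ by $\theta+\nu e^{-\nu'}\cdot\theta$-type expressions that shrink as $\nu'$ grows. The constant $0.614$ would then be the certified minimum over the cells, in the same style as Table~\ref{tab:certs}.
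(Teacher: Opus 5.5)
Your proposal is correct and follows the paper's proof. The paper uses the same chain: $\theta\le\theta_{CR}=\sqrt{2\zeta_2/(n-1)}$ from $m\ge(n-1)/2$, which makes your $(1+o(1))$ exact; then $v\ge\nu_{CR}+\log(1/\theta_{CR})-\theta_{CR}\ge6.198$, $\Psi$ from Lemma~\ref{lem:minor} with $u_{\mathrm{top}}=6.198$, and Lemma~\ref{lem:C1} with $v_2=\infty$, fed into the first inequality of~\eqref{eq:C0cons}. The paper certifies that the single cell $[6.198,\infty)$ already gives $X\le0.0321$, which is inside your threshold $0.033$, so neither your subdivision in $v$ nor the fallback cover in $\nu'$ is needed.
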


\begin{proof}
Since $m\ge n-\lceil n/2\rceil\ge(n-1)/2$ and $\theta^2m\le\zeta_2$, $\theta\le\theta_{CR}:=\sqrt{2\zeta_2/(n-1)}\le0.0057358$. By~\eqref{eq:CWconst},
$\nu'\ge\nu_{CR}=1.04305$, so $v=\nu'-\log\theta-\theta\ge1.04305+\log(1/\theta_{CR})-\theta_{CR}\ge6.198$; both bounds improve with $n$.
Lemma~\ref{lem:minor} with $\theta_{\max}=\theta_{CR}$ and $u_{\mathrm{top}}=6.198$ gives $\Psi=0.33808>5\theta_{CR}$ (\script{ws\_mech\_cd}), and Lemma~\ref{lem:C1} on the single cell
$[6.198,\infty)$ (where only $F_r(\infty)=r!\zeta_2$ is used) gives $X\le0.0321$. By~\eqref{eq:C0cons}, with $q/\theta=e^{-\nu'}$ and $q\le e^{-v}$,
\[
  \log R_k\le\theta\Bigl(\frac{e^{-\nu'}}{1-q}+X-1\Bigr)\le\theta(0.3852-1).
\]
(Table~\ref{tab:certs}, R1.)
\end{proof}

\subsection{The left tail CL}
\begin{proposition}[CL-a]\label{prop:CLa}
For every $n$, if $47\le k\le n-2$ and $k\theta\le0.1$, then $R_k>1$.
\end{proposition}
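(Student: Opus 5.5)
The plan is to use the exact representation of Proposition~\ref{prop:rep} together with the zeroth-order estimate of Lemma~\ref{lem:C0}, after checking that in the regime $k\theta\le0.1$ the ratio $R_k$ is large. When $v=k\theta\le0.1$ we have $\nu=K\theta\le0.1+\theta$. The saddle equation gives $m=S_k(\theta)$, and each summand is $i/(e^{i\theta}-1)\ge(1-i\theta/2)/\theta$. Hence $\theta\le k/m$ roughly, and since $m\ge2$ and $k\ge47$, $\theta\le0.1/47$ is tiny. In particular $q=e^{-\nu}\ge e^{-0.103}\approx0.9$, so $g_0=x/(1-q)\approx1/\nu$ is about $10$. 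This is the useful fact: $\log g_0\ge-\theta-\log\nu\ge-\log(0.1+\theta)-\theta$, about $2.3$. It matches the third inequality of \eqref{eq:C0cons}, and it suffices to show $\rho<1-\nu e^{\theta}$, for instance $\rho\le1/2$.

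\textbf{Bounding $\rho$.} The steps are as follows.

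\emph{Central-arc decay.} Take $c_0=1$ or $c_0=2$. Then $c_0k\theta\le0.2<2$, so $\delta_C$ of Lemma~\ref{lem:delta} applies. It gives $\delta\le1-(1-0.04/12)\log(1+c_0^2)/c_0^2$; for $c_0=1$ this is about $0.31$.

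\emph{Cumulant bounds.} Since $f_r(i\theta)\approx(r-1)!$ for $i\theta\le0.1$, we get $s_r\approx(r-1)!\,v$, so $b\approx v$ and $A_3=s_3\theta^{1/2}b^{-3/2}\approx2(\theta/v)^{1/2}=2/\sqrt k\le0.3$. For $H$ we have $c_1\theta=\theta+f_1(\nu)\le1+\theta$ and $c_2\theta^2=f_2(\nu)\le1$. Thus $T_2=\theta(c_2\theta^2+(c_1\theta)^2)/(2b)\approx\theta/v=1/k$, and similarly $T_1\approx\theta\cdot2v/(2v^2)=1/k$. The hypothesis $k\ge47$ is exactly what makes these numerators small. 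Explicit inequalities $s_r\le(r-1)!\,v$ and $b\ge v f_2(0.1)$ (monotonicity of $f_r$) give rigorous bounds in terms of $k$ alone. Also $w_0=c_0\sqrt{b/\theta}\ge c_0\sqrt{k f_2(0.1)}$, so $\erfc(w_0/\sqrt2)$ is negligible.

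\emph{Minor arcs.} The minor arc $c_0\theta\le|\varphi|\le\pi$ is the main obstacle. Lemma~\ref{lem:minor} was designed for $(k+1)\theta$ of order $1$; here $u_{\mathrm{top}}=(k+1)\theta\le0.1$ is small and $\Psi/\theta$ is not large. Instead I would bound $|\chi|$ directly through \eqref{eq:modchi}: every summand has $\sinh^2(i\theta/2)\le(i\theta/2)^2(1.001)$, so $S(\varphi)\ge\sum_{i\le k}\log\bigl(1+0.99\sin^2(i\varphi/2)/(i\theta/2)^2\bigr)$. For $c_0\theta\le\varphi\le\pi$, I split into two ranges. When $\varphi\le\pi/k$, all $i\varphi/2\le\pi/2$ and each term is at least $\log(1+0.99\cdot(2/\pi)^2c_0^2)$. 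When $\varphi>\pi/k$, a positive proportion of the $i\le k$ have $\sin^2(i\varphi/2)\ge1/2$ (by the Dirichlet identity \eqref{eq:dirichlet}, as in Lemma~\ref{lem:minor}(b)), and those terms have $\log(1+2/(k\theta)^2)\ge\log 200$. Either way $S\ge c\,k$ with an explicit $c$, so $\mathcal M\le\sqrt{2\pi B}e^{-ck/2}$, with $\sqrt B\approx\sqrt{k}/\theta$. The prefactor $1/\theta$ is unbounded, which is the real difficulty. I would handle it by using $\theta\ge$ something like $k/(m+k)$ from the saddle equation, or more simply by retaining the factor $\log(1/(k\theta)^2)$ in the large-$\varphi$ range, which grows as $\theta\to0$ and beats $\log(1/\theta)$.

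\textbf{Conclusion.} With these bounds in hand, I would verify $\rho<1-e^{\theta}\nu$ for all $k\ge47$ and $v\le0.1$; a short finite check covers small $k$, and the bounds improve monotonically in $k$. Then \eqref{eq:C0cons} gives $\log R_k>0$.
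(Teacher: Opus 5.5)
\textbf{Gap: the arc $c_0\theta\le|\varphi|\le\pi/k$.} Most of your architecture matches the paper. You use Lemma~\ref{lem:C0} with $\delta_C$, the bounds $b\ge vf_2(0.1)$ and $s_3\le 2v$, and the third inequality of \eqref{eq:C0cons}, which reduces everything to $\rho<1-e^{\theta}\nu$ with $e^\theta\nu\le0.1024$. Your treatment of $\varphi\ge\pi/k$ (Dirichlet identity, keeping the factor $\log(1/v^2)$) is essentially the paper's $\mathcal N_{34}$, and it does beat the prefactor $\sqrt{2\pi B}\approx\sqrt{2\pi}\,k^{3/2}/v$. The problem is the intermediate arc $c_0\theta\le|\varphi|\le\pi/k$.

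On that arc you replace $\varphi/\theta$ by its minimum $c_0$, so your bound $S\ge c'k$, i.e.\ $|\chi|\le e^{-c'k/2}$, does not depend on $\theta$. Combined with $\sqrt B\approx\sqrt k/\theta$, whether through $\mathcal M\le\sqrt{2\pi B}\sup|\chi|$ or through an arc of length about $\pi/k$, this gives a contribution of order $k^{-1/2}\theta^{-1}e^{-c'k/2}$. That is unbounded as $\theta\to0$.

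This is not a loss in your constant. Near $\varphi=c_0\theta$ one genuinely has $|\chi|\approx(1+c_0^2)^{-k/2}$, independently of $\theta$, so no supremum-times-length estimate over this arc can succeed. Your proposed repairs do not reach it:
\begin{itemize}
\item A lower bound for $\theta$ does not exist uniformly. The proposition is for every $n$, and for fixed $k=47$ the saddle equation gives $\theta\approx k/m\to0$ as $n\to\infty$.
\item The $\log(1/(k\theta)^2)$ gain is available only when $\varphi$ is comparable to $\pi/k$. Near $\varphi\approx c_0\theta$ the ratio $\sin^2(i\varphi/2)/\sinh^2(i\theta/2)$ is only about $c_0^2$.
\item A finite check in $k$ cannot help, since the blow-up is in $\theta$, i.e.\ in $n$.
\end{itemize}

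\textbf{Missing idea: integrate in $c=\varphi/\theta$.} Keep the dependence on $c$ and integrate instead of taking a supremum. Your own estimate gives each summand of $S$ at least $\log(1+0.4c^2)$ on this arc, hence $|\chi|\le(1+0.4c^2)^{-k/2}$. After the substitution $\varphi=c\theta$, the arc contributes at most $2\sqrt{b/(2\pi\theta)}\int_{c_0}^{\infty}(1+0.4c^2)^{-k/2}\dd c$ to $\mathcal M$. Here $\sqrt{b/\theta}=\theta\sqrt B\le\sqrt k$ because $f_2\le1$, so the bound is uniform in $\theta$ and very small for $k\ge47$.

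The paper does exactly this, splitting at $10\theta$:
\begin{itemize}
\item On $[c_0\theta,10\theta]$ it uses \ref{F2} and a cellwise bound $\sin y\ge\sigma y$ to get a Gaussian bound $|\chi|\le e^{-\lambda B\varphi^2/2}$ with cell-dependent $\lambda$, which integrates to $\erfc$ terms ($\mathcal N_1$).
\item On $[10\theta,\pi/k]$ it uses the power decay $|\chi|\le(4f_2(V_0)/\pi^2)^{-k/2}c^{-k}$, integrated against $\theta\sqrt B\le\sqrt k$ ($\mathcal N_2$).
\end{itemize}
With $\theta$ then eliminated via $\theta=v/k$, every bound is monotone in $(k,v)$ and can be evaluated once at $(47,0.1)$. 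That is what replaces your ``finite check''.

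\textbf{Two smaller points.} The Dirichlet identity bounds only the \emph{average} of $\sin^2(id)$. So use \ref{F2} as in Lemma~\ref{lem:minor}(b), rather than counting indices with $\sin^2\ge1/2$. Also, $s_r\le(r-1)!\,v$ is false for $r=4$, since $f_4(u)=6+u^4/120-\cdots$ exceeds $6$ near $0$; with $\delta_C$ you only need $r=3$, so this does not affect your argument.
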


\begin{proof}
Put $K_0=47$, $V_0=0.1$, $\theta_0=V_0/K_0$, so $\theta\le\theta_0$. Here $b\ge vf_2(V_0)$, $s_r\le v\sup f_r$, $c_1\theta\le1+\theta_0$ and
$f_2\le1$, which give $T_1\le(1+\theta_0)\sup f_3/(2K_0f_2(V_0)^2)$, $T_2\le(1+(1+\theta_0)^2)/(2K_0f_2(V_0))$,
$A_3\le\sup f_3\,K_0^{-1/2}f_2(V_0)^{-3/2}$ and $w_0\ge c_0(K_0f_2(V_0))^{1/2}$. We bound $\mathcal M\le\mathcal N_1+\mathcal N_2+\mathcal N_{34}$
according to $\varphi\in[c_0\theta,10\theta]$, $[10\theta,\pi/k]$ and $[\pi/k,\pi]$; since $10\theta=10v/k<\pi/k$ these cover the minor arcs.

\emph{$\mathcal N_1$.} Split $c=\varphi/\theta\in[c_0,10]$ into $60$ cells $[c_j,c_{j+1}]$. As $i\varphi/2\le c_{j+1}V_0/2=:T_j<\pi/2$,
$\sin(i\varphi/2)\ge\sigma_j\,i\varphi/2$ with $\sigma_j=\sin T_j/T_j$, and by~\ref{F2} each summand of $S$ is at least
$\lambda_jc^2f_2(i\theta)$ with $\lambda_j=\log(1+\sigma_j^2c_{j+1}^2)/c_{j+1}^2$. Since $\sum_ic^2f_2(i\theta)=B\varphi^2$, this gives
$|\chi|\le e^{-\lambda_jB\varphi^2/2}$, and
$2\sqrt{B/2\pi}\int_{c_j\theta}^\infty e^{-\lambda_jB\varphi^2/2}\dd\varphi=\erfc\bigl(c_j\theta\sqrt{\lambda_jB/2}\bigr)/\sqrt{\lambda_j}$ with
$\theta^2B=b/\theta\ge K_0f_2(V_0)$.

\emph{$\mathcal N_2$.} Here every angle $i\varphi/2$ is at most $\pi/2$, hence $\sin(i\varphi/2)\ge i\varphi/\pi$, and with
$c=\varphi/\theta$ each summand of $S$ is at least $\log\bigl(4c^2f_2(V_0)/\pi^2\bigr)$. Hence, using $\theta\sqrt B\le\sqrt k$,
\[
  |\chi|\le\Bigl(\frac{4f_2(V_0)}{\pi^2}\Bigr)^{-k/2}c^{-k},\qquad
  \mathcal N_2\le\sqrt{\frac2\pi}\,\sqrt k\,\Bigl(\frac{4f_2(V_0)}{\pi^2}\Bigr)^{-k/2}\frac{10^{1-k}}{k-1}.
\]

\emph{$\mathcal N_{34}$.} Put $d=\varphi/2\in[\pi/(2k),\pi/2]$. By~\ref{F2} each summand is at least $\sin^2(id)w_i$ with
$w_i=\log(1+1/\sinh^2(i\theta/2))\ge w_k$, and by~\eqref{eq:dirichlet} $\sum_{i\le k}\sin^2(id)\ge k/2-1/(2\sin d)$. Since
$\sin d\ge(\pi/2k)\sinc(\pi/(2K_0))$, this is at least $2gk$ with $g:=\frac12\bigl(\frac12-1/(\pi\sinc(\pi/2K_0))\bigr)>0$. Also
$w_k\ge-2\log\sinh(v/2)\ge-2\log(v\sigma_0/2)$ with $\sigma_0=\sinh(V_0/2)/(V_0/2)$. So $|\chi|\le(v\sigma_0/2)^{2gk}$, and with
$B\le k^3/v^2$, $\mathcal N_{34}\le\sqrt{2\pi}\,k^{3/2}v^{-1}(v\sigma_0/2)^{2gk}$, which is increasing in $v$ and decreasing in $k$ (both
asserted), so it is evaluated at $(K_0,V_0)$.

With $c_0=0.6$ and $\delta=\min(\delta_T,\delta_C)$ this gives $\mathcal N_1\le3.7\cdot10^{-4}$, $\mathcal N_2\le10^{-37}$,
$\mathcal N_{34}\le6.4\cdot10^{-8}$ and $\rho\le0.0673$. By the third inequality of~\eqref{eq:C0cons}, $\log R_k>0$ as soon as
$1-\rho>e^\theta\nu$, and $e^\theta\nu\le e^{\theta_0}V_0(1+1/K_0)\le0.1024$ (Table~\ref{tab:certs}, R2).
\end{proof}

\begin{proposition}[CL-b]\label{prop:CLb}
Let $n\ge10^5$ and $k_D<k\le\mu-2\beta$ with $k\theta\ge0.1$. Then $R_k>1$.
\end{proposition}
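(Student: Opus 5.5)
We would prove Proposition~\ref{prop:CLb} by the zeroth-order estimate of Lemma~\ref{lem:C0}, made uniform in $\theta$ by Lemma~\ref{lem:C1}, applied on a finite cover of the range of $v=k\theta$. The goal is to use the third inequality of~\eqref{eq:C0cons}, $\log R_k\ge-\theta-\log\nu+\log(1-\rho)$. This gives $R_k>1$ as soon as $(1-\rho)e^{-\theta}>\nu$. When $\nu$ is not small, use instead the second inequality, $\log R_k\ge-\theta+q+\log(1-\rho)$, which works whenever $q$ beats $\theta+\rho$ with $\rho\le\theta X$.

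\textbf{Range of $(\theta,v)$.} Since $k>k_D=\lfloor1.7n^{1/3}\rfloor$ and $n\ge10^5$, we have $k\ge79$. Also $\theta^2m\le\zeta_2$ with $m\ge n-\mu$, so $\theta\le\bar\theta:=\sqrt{\zeta_2/(n-\mu)}$. This is at most about $0.00406$ at $n=10^5$ and decreases with $n$.

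At the right end, \eqref{eq:CWconst} gives $\nu'\le\nu_{CL}=-1.95361$. Hence $q/\theta=e^{-\nu'}\ge e^{1.95}\approx7.05$, so on that end $q\ge7\theta$. At the left end, $v\ge0.1$.

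\textbf{Two sub-ranges.} We split according to $\nu=v+\theta$.

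For $v$ in a bounded range $[0.1,V_1]$ with $V_1$ moderate (say $V_1\approx2$), cover it by geometric cells $[v_1,v_2]$. On each cell, Lemma~\ref{lem:minor}(a),(b) with $u_{\mathrm{top}}=v_1$ gives $\Psi$, and Lemma~\ref{lem:C1} gives $X$. We then certify the inequality
\[
-\theta+q+\log(1-\theta X)>0,
\]
where $q=e^{-\nu}\ge e^{-v_2-\bar\theta}$ is bounded below by a constant of order $1$. Since $\theta X$ is tiny, this should hold comfortably. For very small $v$ the first form, with $\log\nu<0$, is even easier.

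For $v\ge V_1$, only the relation $q\ge\theta e^{-\nu_{CL}}$ is available. We need
\[
q\Bigl(1-\tfrac{q}{2}\Bigr)>\theta(1+X),
\]
i.e.\ $e^{-\nu'}>1+X+O(q)$. Here $e^{-\nu'}\ge7.05$, so we need $X$ below roughly $5$ on cells covering $[V_1,v_{\max}]$. Because $k\le\mu$, $v$ is at most about $\theta\mu\le\ell(1+o(1))\approx6$ at $n=10^5$. This range grows with $n$, so the last cell must be $[v_1,\infty)$, using $F_r(\infty)$ as in Proposition~\ref{prop:CR}.

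\textbf{Main obstacle.} The main difficulty is the minor-arc constant $\Psi$ for small $v_1$ (close to $0.1$). There $u_{\mathrm{top}}=v_1$ is small and Lemma~\ref{lem:minor} gives only a weak $\Psi$, while we must still have $\Psi>5\bar\theta\approx0.02$. The integrals in Lemma~\ref{lem:minor}(a) over $[\theta_{\max},v_1]$ are of size about $v_1\log(1+c_0^2\sigma^2)$, roughly $0.1\cdot0.14$. This is borderline, so there we would take a larger $c_0$ (say $1$ or $2$), which Lemma~\ref{lem:C1} permits.

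If that still fails near $v=0.1$, we would instead replace the minor-arc treatment by the $\mathcal N_1,\mathcal N_2,\mathcal N_{34}$ decomposition used in Proposition~\ref{prop:CLa}, now with $k\ge79$ and $V_0$ per cell. That decomposition does not need $\theta$-uniform $\Psi$.

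All cell checks would be done in ball arithmetic, like R1 and R2.
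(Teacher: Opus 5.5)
Your plan follows the paper's proof in outline. Lemma~\ref{lem:C0} is made uniform by Lemma~\ref{lem:C1} on geometric cells of $v=k\theta$; the paper uses $32$ cells of ratio $1.15$ from $0.1$ plus the final cell $[0.1\cdot1.15^{32},\infty)$. It then applies the second inequality of~\eqref{eq:C0cons}, with $q/\theta\ge\max\bigl(e^{1.95361},\,e^{-(v_2+\bar\theta)}/\bar\theta\bigr)$ on \emph{every} cell, so no split into sub-ranges is needed.

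What you missed is the observation that removes your ``main obstacle''. Since $k\ge79$ and $k\theta\le v_2$, one has $\theta\le v_2/79$, and the paper takes $\bar\theta:=\min(\theta_A,v_2/78)$ cell by cell. On the first cell this is about $0.0015$ instead of your uniform $0.0041$.

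This is not cosmetic. With $\bar\theta\approx0.0041$, the majorant of Lemma~\ref{lem:C1} near $v=0.1$ is evaluated at an effective $k=v/\bar\theta\approx25$, a configuration that never occurs. There $T_1,T_2\approx\bar\theta/v\approx0.04$ and $A_3\approx2(\bar\theta/v)^{1/2}\approx0.4$, so $\rho$ is about $1/2$, not tiny. Both values of $c_0$ you suggest then fail:
\begin{itemize}
\item For $c_0=1$, Lemma~\ref{lem:minor} gives only $\Psi\approx0.096\log2$, so $\Psi/(2\bar\theta)\approx8$. With the prefactor $\sqrt{2\pi b}\,\bar\theta^{-3/2}\approx3\cdot10^3$, the minor-arc term is $\mathcal M\approx0.9$, and the denominator of $\rho$ is negative.
\item For $c_0=2$, $\delta_T>1$ and $\delta_C\approx0.6$, and the central-arc terms alone already give $\rho>1$.
\end{itemize}
Only an intermediate $c_0$ (around $1.3$) survives, still with $\rho\approx0.5$. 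The first cell then passes only through the unrelaxed inequality $-\theta+q+\log(1-\rho)>0$ (or the $-\log\nu$ form), not through the paper's test $q/\theta>1+X/(1-\bar\theta X)$.

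Your fallback via the $\mathcal N_1,\mathcal N_2,\mathcal N_{34}$ decomposition of Proposition~\ref{prop:CLa} would work. The reason is that its bounds are expressed through $k\ge K_0$, not through a uniform $\bar\theta$, which is the same observation. With $\bar\theta=\min(\theta_A,v_2/78)$ nothing special is needed near $v=0.1$. Even $c_0=0.4$ satisfies $\Psi>5\bar\theta$ there, and $\rho$ falls well below $0.1$ (for instance with $c_0=0.8$). Every cell passes the relaxed test with ratio at least $5.82$.
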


\begin{proof}
Here $k\ge79$, $\theta\le\theta_A$ and $\nu'\le\nu_{CL}=-1.95361$ by~\eqref{eq:CWconst}. Cover $v=k\theta\in[0.1,\infty)$ by the $32$ geometric
cells $[v_1,v_2]$ of ratio $1.15$ starting at $0.1$ and the final cell $[0.1\cdot1.15^{32},\infty)=[8.7565\ldots,\infty)$. On each cell $\theta\le\bar\theta:=\min(\theta_A,v_2/78)$; let
$X$ be the constant of Lemma~\ref{lem:C1}. By~\eqref{eq:C0cons}, $\log R_k\ge-\theta+q-\theta X/(1-\theta X)$, which is positive as soon as
$q/\theta>1+X/(1-\bar\theta X)$. For $q/\theta$ we use the larger of $e^{-\nu'}\ge e^{1.95361}$ and $e^{-(v_2+\bar\theta)}/\bar\theta$. Every
cell passes, with minimum ratio $5.82$ (Table~\ref{tab:certs}, R3).
\end{proof}

\subsection{Small \texorpdfstring{$k$}{k}: the region D}\label{sec:D}
\begin{lemma}\label{lem:D1}
For $1\le k\le n$, $\binom{n-1}{k-1}\le k!\,p(n,k)\le\binom{n-1+k(k-1)/2}{k-1}$.
\end{lemma}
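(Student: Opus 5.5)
The plan is to pass to compositions of $n$ into $k$ positive parts and compare them with partitions. There are exactly $\binom{n-1}{k-1}$ such compositions. Each partition of $n$ into exactly $k$ parts has at most $k!$ orderings, so the orderings of all of them give at most $k!\,p(n,k)$ compositions. Every composition arises this way, by sorting its parts. Hence $\binom{n-1}{k-1}\le k!\,p(n,k)$, which is the lower bound.

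For the upper bound I will use the standard injection that makes parts distinct. Write a partition as $\lambda_1\ge\lambda_2\ge\dots\ge\lambda_k\ge1$ and set $\mu_j:=\lambda_j+(k-j)$. Then $\mu_1>\mu_2>\dots>\mu_k\ge1$, and $\sum\mu_j=n+k(k-1)/2=:N$. The map $\lambda\mapsto\mu$ is a bijection between partitions of $n$ into exactly $k$ parts and partitions of $N$ into exactly $k$ distinct parts.

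Each partition of $N$ into $k$ distinct parts has exactly $k!$ orderings, and these orderings are pairwise distinct compositions of $N$ into $k$ positive parts. Different partitions give disjoint sets of orderings. Therefore $k!\,p(n,k)$ is at most the number of compositions of $N$ into $k$ positive parts, which is $\binom{N-1}{k-1}=\binom{n-1+k(k-1)/2}{k-1}$.

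Both inequalities are elementary, so I do not expect a real obstacle. The only points to check are the index shift in $\mu_j$ (the smallest part stays $\ge1$, since $\mu_k=\lambda_k$) and the edge case $k=1$, where both bounds equal $1$.
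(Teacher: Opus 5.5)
Your proof is correct and follows the paper's argument essentially verbatim: sorting compositions of $n$ gives the lower bound, and the shift $\lambda_j\mapsto\lambda_j+k-j$ into partitions of $n+k(k-1)/2$ into $k$ distinct parts, each of which has exactly $k!$ orderings, gives the upper bound. The one difference is that you note the shift is a bijection, whereas the paper states only the injectivity it needs.
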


\begin{proof}
Sorting a composition of $n$ into $k$ positive parts gives a partition into $k$ parts, and each partition arises from at most $k!$
compositions. Conversely $\lambda\mapsto(\lambda_i+k-i)_{i\le k}$ maps partitions of $n$ into $k$ parts injectively to partitions of
$n+k(k-1)/2$ into $k$ distinct parts, each of which arises from exactly $k!$ compositions.
\end{proof}

\begin{proposition}\label{prop:D}
Let $n\ge10^5$ and $1\le k\le k_D(n):=\lfloor1.7n^{1/3}\rfloor$. Then $p(n,k+1)>p(n,k)$.
\end{proposition}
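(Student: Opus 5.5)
The plan is to compare $p(n,k+1)$ and $p(n,k)$ directly through the binomial sandwich of Lemma~\ref{lem:D1}, using its lower bound at $k+1$ and its upper bound at $k$. Write $N':=n-1+k(k-1)/2$. Lemma~\ref{lem:D1} gives
\[
  \frac{p(n,k+1)}{p(n,k)}\ \ge\ \frac{\binom{n-1}{k}/(k+1)!}{\binom{N'}{k-1}/k!}
  =\frac{1}{k+1}\cdot\frac{n-k}{k}\cdot\frac{\binom{n-1}{k-1}}{\binom{N'}{k-1}} .
\]
The last factor is the product $\prod_{j=0}^{k-2}\frac{n-1-j}{N'-j}$. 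Each term of that product is at least $1-\frac{k(k-1)/2}{n-k}$. Hence, using $\log(1-y)\ge -y/(1-y)$,
\[
  \log\frac{p(n,k+1)}{p(n,k)}\ \ge\ \log\frac{n-k}{k(k+1)}\;-\;\frac{(k-1)\,k(k-1)/2}{n-k-k(k-1)/2}.
\]
Call the right-hand side $\Lambda(n,k)$. It suffices to show $\Lambda(n,k)>0$ for $1\le k\le k_D$ and $n\ge10^5$. The case $k=1$ is trivial in any event, since $p(n,2)=\lfloor n/2\rfloor>1$.

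The next step is to reduce to the single point $k=k_D$. In $k$, the first term of $\Lambda$ is decreasing and the subtracted term is increasing, because its numerator grows and its denominator shrinks for $k\le k_D\ll\sqrt n$. So $\Lambda(n,\cdot)$ is decreasing on $[1,k_D]$, and it is enough to treat $k=k_D\le1.7n^{1/3}$.

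At that point I substitute $k\le1.7n^{1/3}$ and $n-k\ge n(1-1.7n^{-2/3})$:
\begin{itemize}
\item the first term is at least $\log\bigl(n^{1/3}/1.7^2\bigr)-O(n^{-1/3})$;
\item the subtracted term is at most $\frac{1.7^3}{2}\,(1+O(n^{-1/3}))\approx2.4565$.
\end{itemize}
With $n^{1/3}\ge46.4$ the first term is about $\log(16.06)\approx2.776$. The margin is therefore about $0.32$ in the logarithm, i.e.\ a ratio of roughly $1.37$.

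The main obstacle is keeping this margin honest: the binomial sandwich is crude, and the margin at $n=10^5$ is not large. I would therefore carry every error factor explicitly as a function of $x=n^{-1/3}\le10^{-5/3}$, namely $(1+1/k)^{-1}$, $1-k/n$, and the denominator $n-k-k(k-1)/2$. I would also check that each of these factors is monotone in $n$ in the favourable direction. Then I would verify the resulting one-variable inequality at $n=10^5$, either by hand or with a single ball-arithmetic evaluation. Monotonicity in $n$ at fixed $k/n^{1/3}$ should follow because the main quantity $\log(n^{1/3}/2.89)$ grows while the corrections shrink. This also handles the integer-part issue, since $k_D\le1.7n^{1/3}$ is all that is used.
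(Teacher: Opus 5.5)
Your proposal is correct and follows the paper's proof essentially step for step. You use Lemma~\ref{lem:D1} at $k+1$ and at $k$, reduce by monotonicity in $k$ to $k=1.7n^{1/3}$, and then show that a one-variable function of $s=n^{1/3}$ is increasing and positive at $s=10^{5/3}$.

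The only difference is the per-factor bound. The paper uses $(1+D/(n-1-j))^{-1}\ge e^{-D/(n-k+1)}$, which gives the sharper denominator $n-k+1$ in place of your $n-k-k(k-1)/2$. Your cruder version still leaves a positive margin at $n=10^5$: about $0.23$ even after replacing $(k-1)^2$ by $k^2$, against the paper's $\Psi_D(s_A)\approx0.305$. Your heuristic figure of $0.32$ is a little optimistic once the $O(n^{-1/3})$ corrections are kept, but nothing breaks.
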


\begin{proof}
Let $\Phi(k,n):=\log(n-k)-\log(k(k+1))-k(k-1)^2/(2(n-k+1))$. \emph{Step 1: $\Phi>0$ implies the claim.} By Lemma~\ref{lem:D1} it suffices
that $\binom{n-1}{k}/\binom{N-1}{k-1}>k+1$ with $N=n+D$, $D=k(k-1)/2$. The left side equals
$\frac{n-k}k\prod_{j=0}^{k-2}(1+D/(n-1-j))^{-1}\ge\frac{n-k}k\exp(-(k-1)D/(n-k+1))$, by $1+y\le e^y$ and $n-1-j\ge n-k+1$.

\emph{Step 2: monotonicity in $k$.} For real $1\le k<n$,
\[
  \partial_k\Phi=-\frac1{n-k}-\frac1k-\frac1{k+1}-\frac{(k-1)\bigl[(3k-1)(n-k+1)+k(k-1)\bigr]}{2(n-k+1)^2}<0 .
\]
Put $\gamma=1.7$ and $s=n^{1/3}\ge s_A:=10^{5/3}$; since $\gamma s<n-1$, $\Phi(k,n)\ge\Phi(\gamma s,s^3)$ for $k\le\gamma s$.

\emph{Step 3: reduction to one variable.} Using $k(k-1)^2/(n-k+1)\le k^3/(n-k)$, which follows from
$k^3(n-k+1)-k(k-1)^2(n-k)=k[(n-k)(2k-1)+k^2]\ge0$, we get $\Phi(\gamma s,s^3)\ge\Psi_D(s)$, where
\[
  \Psi_D(s):=\log\frac{s^2-\gamma}{\gamma(\gamma s+1)}-\frac{\gamma^3}{2(1-\gamma/s^2)},
\]
and, exactly,
\[
  \Psi_D'(s)=\frac{\gamma s^2+2s+\gamma^2}{(s^2-\gamma)(\gamma s+1)}+\frac{\gamma^4s}{(s^2-\gamma)^2}>0\quad(s>\sqrt\gamma).
\]
\emph{Step 4.} $\Psi_D(s_A)=0.30456\ldots>0$ (ball arithmetic), so $\Psi_D(s)>0$ for all $s\ge s_A$ (Table~\ref{tab:certs}, R4).
\end{proof}

\subsection{Large \texorpdfstring{$k$}{k}: the region E}
\begin{lemma}\label{lem:E}
If $2k\ge n$ then $p(n,k)=p(n-k)$. Consequently $p(n,k+1)\le p(n,k)$ for $\lceil n/2\rceil\le k\le n-1$, with equality only for $k=n-1$.
\end{lemma}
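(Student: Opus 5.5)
The argument is a bijection followed by monotonicity of $p$. By Lemma~\ref{lem:comb}, $p(n,k)=f_k(n-k)$, the number of partitions of $n-k$ into parts of size at most $k$. If $2k\ge n$ then $n-k\le k$, so every partition of $n-k$ automatically has all parts $\le k$. Hence $f_k(n-k)=p(n-k)$.

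Directly: subtract $1$ from each of the $k$ parts and discard zeros. This maps partitions of $n$ into exactly $k$ parts bijectively onto partitions of $n-k$ into at most $k$ parts. The constraint "at most $k$ parts" is vacuous since $n-k\le k$. Either description gives the first claim.

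For the consequence, take $\lceil n/2\rceil\le k\le n-1$. Then both $k$ and $k+1$ satisfy $2k\ge n$ and $2(k+1)\ge n$, and $k+1\le n$. So
\[
  p(n,k+1)=p(n-k-1)\quad\text{and}\quad p(n,k)=p(n-k).
\]
It remains to use that the ordinary partition function is nondecreasing, with $p(j)<p(j+1)$ for $j\ge1$ and $p(0)=p(1)=1$. The inequality $p(j+1)\ge p(j)$ holds because appending a part $1$ injects partitions of $j$ into partitions of $j+1$. It is strict for $j\ge1$, since the one-part partition $(j+1)$ has no part $1$ and so lies outside the image.

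Set $j=n-k-1\ge0$. We get $p(n,k+1)=p(j)\le p(j+1)=p(n,k)$, with equality iff $j=0$, i.e.\ $k=n-1$. There is no real obstacle; the only care needed is the boundary bookkeeping, namely checking that $k+1\le n$ and that the case $j=0$ is exactly where equality occurs.
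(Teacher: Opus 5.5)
Your proposal is correct and follows essentially the same argument as the paper: subtract $1$ from each part to get partitions of $n-k$ into at most $k$ parts (a vacuous constraint when $2k\ge n$), set $j=n-k-1$, and compare $p(j)$ with $p(j+1)$ via the ``append a part $1$'' injection, which misses $(j+1)$ exactly when $j\ge1$. Your alternative route through $p(n,k)=f_k(n-k)$ from Lemma~\ref{lem:comb} is an equivalent restatement of the same bijection.
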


\begin{proof}
Subtracting $1$ from each part is a bijection from partitions of $n$ into exactly $k$ parts to partitions of $n-k$ into at most $k$ parts,
and the latter condition is vacuous when $n-k\le k$. For $k\ge\lceil n/2\rceil$ also $2(k+1)\ge n$, so $p(n,k+1)=p(j)$ and $p(n,k)=p(j+1)$
with $j=n-k-1$. Appending a part $1$ injects partitions of $j$ into those of $j+1$, and misses $(j+1)$ when $j\ge1$.
\end{proof}

%======================================================================
\section{Proof of the Main Theorem for \texorpdfstring{$n\ge10^5$}{n >= 10\^5}}\label{sec:assembly}
%======================================================================

Let $n\ge10^5$. The five ranges
\[
  D=[1,k_D],\ CL=(k_D,\mu-2\beta],\ W=(\mu-2\beta,\mu+2\beta),\ CR=[\mu+2\beta,\lceil n/2\rceil],\ E=[\lceil n/2\rceil,n-1]
\]
cover $[1,n-1]$, because $\mu+2\beta=\beta(\ell+2)\le n/2-1$: indeed $\beta(\ell+2)/(n/2)=(2/\zeta_2)(\ell+2)e^{-\ell}$ is decreasing for
$\ell>-1$ and is below $0.04$ at $n=10^5$. (If $k_D\ge\mu-2\beta$, the range $CL$ is empty and nothing changes.) Every $k$ in $CL\cup W\cup CR$
satisfies $79\le k\le\lceil n/2\rceil\le n-2$, so Proposition~\ref{prop:rep} applies there. The hypotheses of every lemma used hold for every
$n\ge10^5$; this bookkeeping is collected in Table~\ref{tab:thresholds} and checked by a separate program (Table~\ref{tab:certs}, T).

Now:
\begin{itemize}
\item $R_k>1$ for $k\in D$ (Proposition~\ref{prop:D});
\item $R_k>1$ for $k\in CL$: such $k$ satisfy $k\ge79\ge47$, so Proposition~\ref{prop:CLa} applies if $k\theta\le0.1$ and
  Proposition~\ref{prop:CLb} if $k\theta\ge0.1$;
\item on $W$ the signs of $\log R_k$ have the form $+\cdots+-\cdots-$ (Corollary~\ref{cor:onek});
\item $R_k<1$ for $k\in CR$ (Proposition~\ref{prop:CR}) and $R_k\le1$ for $k\in E$ (Lemma~\ref{lem:E}).
\end{itemize}
Concatenating, the signs of $p(n,k+1)-p(n,k)$, $1\le k\le n-1$, change at most once, from nonnegative to nonpositive. Hence the row is
unimodal. \qed

\begin{table}[ht]
\caption{Hypotheses of the lemmas and why they hold for all $n\ge10^5$.}\label{tab:thresholds}
\small
\begin{tabular}{@{}>{\raggedright\arraybackslash}p{0.27\textwidth}>{\raggedright\arraybackslash}p{0.33\textwidth}>{\raggedright\arraybackslash}p{0.33\textwidth}@{}}
\toprule
Lemma & Hypothesis & Holds because \\
\midrule
Cor.~\ref{cor:cstar} (minor arcs, window) & $\theta\le0.0042$, $(k+1)\theta\ge3$ & Lemma~\ref{lem:W}: $\theta\le0.0040939$, $\nu\ge3.1158$ \\
Thm.~\ref{thm:A}, Region I & $\theta\in[0.002,\theta_A]$, $\nu'\in[-2.52,2.10]$, $\nu\ge3.1158$ & Lemma~\ref{lem:W} \\
Thm.~\ref{thm:A}, Region II & $\theta\le0.002$, $\nu'\in[-2.52,2.10]$, $\theta<c_\ast/3.5$ & Lemma~\ref{lem:W}; $0.15926/3.5>0.002$ \\
Thm.~\ref{thm:B} & same ranges at $k$; hypothesis of Lemma~\ref{lem:B1}(i) & asserted on every box and cell \\
Cor.~\ref{cor:onek} & $\Sigma_{\min}/1.01>2\Gamma_{\max}$ & $0.91008>0.8564$ \\
Prop.~\ref{prop:CR} & $\theta\le\theta_{CR}$, $v\ge6.198$, $\Psi>5\theta_{CR}$ & $\theta_{CR}$ decreasing in $n$; $0.33808>0.0287$ \\
Prop.~\ref{prop:CLa} & $k\ge47$, $k\theta\le0.1$ & independent of $n$ \\
Prop.~\ref{prop:CLb} & $k\ge79$, $\theta\le\theta_A$, $\nu'\le\nu_{CL}$ & $k_D(n)\ge78$; \eqref{eq:CWconst} \\
Prop.~\ref{prop:D} & $n\ge10^5$, $k\le1.7n^{1/3}$ & $\Psi_D$ increasing \\
Prop.~\ref{prop:rep} & $2\le k\le n-2$ & $79\le k\le\lceil n/2\rceil$ on $CL\cup W\cup CR$ \\
\bottomrule
\end{tabular}
\end{table}

%======================================================================
\section{Finite verification for \texorpdfstring{$n\le2\cdot10^5$}{n <= 2*10\^5}}\label{sec:finite}
%======================================================================

By Lemma~\ref{lem:E}, the row $n$ is unimodal if and only if the signs of $p(n,k)-p(n,k-1)$ for $2\le k\le\lceil n/2\rceil+1$ contain no
$+$ after a $-$. All $p(n,k)$ with $n\le N$ are produced simultaneously by the column recurrence
\[
  v[j]\leftarrow v[j]+v[j-k]\qquad(j=k,k+1,\dots),\qquad k=1,2,\dots,
\]
started from $v=(1,0,0,\dots)$: after step $k$, $v[j]$ is the number of partitions of $j$ into parts $\le k$, which equals $p(j+k,k)$
by Lemma~\ref{lem:comb}. The cost is $O(N^2)$ additions.

Two independent programs implement this.
\begin{enumerate}
\item \emph{Exact integers} (\script{ws\_fc\_exact.c}). Fixed-width multi-limb unsigned integers ($27$ limbs of $64$ bits for $N=2\cdot10^5$,
  justified by $p(N)<e^{\pi\sqrt{2N/3}}$; a carry beyond the last limb aborts the run). Only additions and comparisons are performed.
  Since $v[j+1]$ is not yet updated when $v[j]$ has just been updated at step $k$, the comparison of $p(n,k)$ with $p(n,k-1)$ needs no
  second array. The program uses no floating point and no threads.
\item \emph{Interval arithmetic} (\script{ws\_fc\_check.c}). Two copies of the array in x87 extended precision ($64$-bit significand,
  no overflow since $\log_2p(2\cdot10^5)\approx1660$), one updated with rounding toward $-\infty$ and one toward $+\infty$. Since every
  operation is an addition of nonnegative numbers, induction gives $\mathrm{lo}[j]\le v[j]\le\mathrm{hi}[j]$ throughout. A pair is classified
  as $+$ or $-$ only if the intervals are disjoint; no pair remained unclassified.
\end{enumerate}
For $N=2\cdot10^5$ both programs report no violation; there are $256\,705\,214$ increasing pairs, $9\,743\,394\,777$ decreasing pairs and $8$
ties (all for small $n$). They write a per-$n$ log (mode and verdict) and the two logs are byte-identical. For $N=2000$ both logs also agree
byte-for-byte with a third, Python big-integer computation via $p(n,k)=p(n-1,k-1)+p(n-k,k)$. As an illustration, the mode of the row is
$k=2042$ for $n=2\cdot10^5$.

Since $2\cdot10^5>10^5$, Sections~\ref{sec:assembly} and~\ref{sec:finite} together prove the Main Theorem for every $n\ge1$.

\begin{theorem}[Strict form]\label{thm:strict}
For every $n\ge4$ there is an index $t=t(n)$ with
$p(n,1)<\dots<p(n,t)$, $p(n,t)\ge p(n,t+1)$, $p(n,t+1)>p(n,t+2)>\dots>p(n,n-1)$ and $p(n,n-1)=p(n,n)=1$.
Moreover $p(n,t)=p(n,t+1)$ exactly for $n\in\{5,6,8,13,14\}$.
\end{theorem}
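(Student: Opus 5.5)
The plan is to read off the strict inequalities from the sign structure already established, and to split the argument at $n=2\cdot10^5$. For $4\le n\le2\cdot10^5$ everything comes from the exact-integer program of Section~\ref{sec:finite}. That program already classifies every pair $(p(n,k-1),p(n,k))$ with $2\le k\le\lceil n/2\rceil+1$ as increasing, decreasing or tied, and it reports exactly $8$ ties. I would make it additionally print the list of tied pairs $(n,k)$ and check three things. First, each tie sits at the sign change, i.e.\ at $(t,t+1)$ with all earlier pairs strictly increasing and all later pairs (up to $\lceil n/2\rceil$) strictly decreasing. Second, the $n$ occurring are exactly $5,6,8,13,14$ together with $n\le3$. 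Third, the remaining tied pairs are accounted for by $n\le3$. For the tail $k\ge\lceil n/2\rceil$, Lemma~\ref{lem:E} gives strict decrease except for the final equality $p(n,n-1)=p(n,n)=1$, which is the last clause of the statement. For $n=4$ the row $1,2,1,1$ is checked by hand, and the small rows give the listed exceptional $n$.

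For $n>2\cdot10^5$ I would rerun the assembly of Section~\ref{sec:assembly}, keeping track of strictness.
\begin{itemize}
\item On $D$, $CL$ and $CR$ the inequalities of Propositions~\ref{prop:D}, \ref{prop:CLa}, \ref{prop:CLb} and \ref{prop:CR} are already strict.
\item On $E$, Lemma~\ref{lem:E} gives strict decrease except at $k=n-1$.
\item On $W$, Corollary~\ref{cor:onek} shows that every index except possibly one undetermined $k\in\{k^\ast,k^\ast+1\}$ has $|L_{2,k}|>\varepsilon_k$. Hence $L_k\neq0$ there, and the sign is strict.
\end{itemize}
Taking $t$ to be the last index with $p(n,t)>p(n,t-1)$ then yields the displayed shape with at most one non-strict step, namely $p(n,t)\ge p(n,t+1)$. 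This already gives the first sentence of the theorem for all $n\ge4$.

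The main obstacle is the ``moreover'': for $n>2\cdot10^5$ one must exclude $p(n,k)=p(n,k+1)$ at the single undetermined index. The approximation $|L_k-L_{2,k}|\le\varepsilon_k$ cannot by itself rule out the exact value $L_k=0$. My first attempt would be to show that an undetermined index never actually occurs for $n>2\cdot10^5$. For this I would use the finer Region~II constant $\varepsilon_k\le0.17605\,\theta q$, valid since $\theta\le r/\beta<0.002$ once $n$ exceeds roughly $1.5\cdot10^5$, together with the Region~II step bound $\Sigma\ge0.96039$.

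These bounds do not preclude $|L_{2,k}|$ being tiny, since $L_{2,k}$ is a continuous function of $n$ and can pass near $0$. So the second idea is arithmetic. I would write
\[
p(n,k+1)-p(n,k)=f_{k+1}(m-1)-f_k(m)=\sum_{s\ge1}f_k(m-1-sK)-\bigl(f_k(m)-f_k(m-1)\bigr)
\]
and try to show that this difference is nonzero at the crossing, using a congruence or a parity invariant. I do not currently see such an invariant.

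If neither idea works, the honest fallback is that the analytic part of the paper proves the exceptional set is finite only modulo the undetermined index. In that case the exact list $\{5,6,8,13,14\}$ would have to be stated as verified for $n\le2\cdot10^5$ only. I expect the paper's own argument must supply an extra ingredient at precisely this point, and this is the step I would attack first.
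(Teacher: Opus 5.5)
Your argument for the first sentence is the paper's argument. It consists of the exact finite check for $n\le2\cdot10^5$ (the paper pins down the eight ties by a direct computation for $n\le200$, which is equivalent to printing them) and Lemma~\ref{lem:E} for $k\ge\lceil n/2\rceil$. It adds strictness on $D$, $CL$ and $CR$, and strictness on $W$ away from the single undetermined index of Corollary~\ref{cor:onek}, which sits at the sign change. Your choice of $t$ as the last index with $p(n,t)>p(n,t-1)$ then gives the displayed pattern. That part is correct and complete.

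The gap you flag in the ``moreover'' clause is genuine, but the paper does \emph{not} supply an extra ingredient there. Its proof for $n\ge10^5$ ends with ``that index lies at the sign change, so a tie can only occur between the two largest terms.'' This is exactly your conclusion, and nothing further is said about whether $p(n,t)=p(n,t+1)$ can occur for $n>2\cdot10^5$. Your explanation of why sharper constants cannot help is also right:
\begin{itemize}
\item Theorem~\ref{thm:A} places $L_k$ only within $\varepsilon_k$ of $L_{2,k}$.
\item Theorem~\ref{thm:B} bounds the step of $L_2$ from below but gives no lower bound on $|L_{2,k}|$ at the crossing.
\item So when $|L_{2,k}|\le\varepsilon_k$, the value $L_k=0$ is consistent with everything proved.
\end{itemize}
Excluding it would need an arithmetic input of the kind you looked for, or an exact evaluation, and neither is in the paper.

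What is actually established, by the paper and by your proposal alike, is this. Among $4\le n\le2\cdot10^5$ the top tie occurs exactly for $n\in\{5,6,8,13,14\}$. For $n>2\cdot10^5$ a tie, if any, can only be between the two largest terms. Your fallback formulation is the accurate statement of what is proved; the ``exactly'' claim for all $n\ge4$, as written, is not established.

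A small side slip: $\theta\le r/\beta<0.002$ needs $\beta\gtrsim502$, that is $n\gtrsim4.1\cdot10^5$, not $1.5\cdot10^5$. This is moot, since that route cannot close the gap anyway.
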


\begin{proof}
For $n\le2\cdot10^5$ both programs classify every pair $(p(n,k),p(n,k+1))$ with $k\le\lceil n/2\rceil$ as increasing, decreasing or tied.
They report exactly $8$ ties, and a direct computation for $n\le200$ locates all eight:
\[
(n,k)=(2,1),\ (3,1),\ (3,2),\ (5,2),\ (6,2),\ (8,3),\ (13,4),\ (14,4),
\]
each between the two largest terms of its row. Lemma~\ref{lem:E}
gives strict decrease for $\lceil n/2\rceil\le k\le n-2$. For $n\ge10^5$, every comparison is strict in Regions $D$, $CL$ and $CR$
(Section~\ref{sec:CLCR}) and on $W$ outside the at most one undetermined index of Corollary~\ref{cor:onek}; that index lies at
the sign change, so a tie can only occur between the two largest terms. Region $E$ is handled by Lemma~\ref{lem:E}.
\end{proof}

%======================================================================
\section{The computer-assisted parts}\label{sec:computer}
%======================================================================

\subsection{Method}
All inequalities in the analytic part that involve explicit numbers are certified in Arb~\cite{Arb} ball arithmetic (python-flint 0.9.0).
A ball enclosure of a function on a box contains every value the function takes on that box, so a strict inequality certified on each box of
a cover holds on the union. Where a statement is about all $\theta$ near $0$ or all $\beta\ge\beta_A$, it is reduced to finitely many boxes
either by a tail box in variables that are monotone on the tail (Lemma~\ref{lem:W}), or by a monotone majorant whose monotonicity is checked
symbolically (Section~\ref{sec:proofA}, Lemma~\ref{lem:C1}). Floating-point grids are used only to choose the cells. Integrals are bounded
below by right Riemann sums of decreasing integrands. Exact algebraic identities (Gaussian moments, the polynomial identity of
Lemma~\ref{lem:gap}, derivatives, the Dirichlet identity~\eqref{eq:dirichlet}, the telescoping in Lemma~\ref{lem:Fr}) are checked with
SymPy.

\subsection{List of certified statements}
Table~\ref{tab:certs} lists every computer-assisted statement used in the proof, the program that certifies it (in the directory
\texttt{analytic/} or \texttt{finite/} of the repository), and the certified value. The driver script \texttt{run\_all.sh} runs all
the analytic programs and the exact finite check (to $2\cdot10^5$ with the option \texttt{--full}) and compares each output with a
stored reference; the interval program F2 is run separately (see the repository README); on a two-core machine the analytic part takes about five minutes and the
exact finite check to $2\cdot10^5$ about eight minutes.

{\small
\begin{longtable}{@{}>{\raggedright\arraybackslash}p{0.07\textwidth}>{\raggedright\arraybackslash}p{0.25\textwidth}>{\raggedright\arraybackslash}p{0.36\textwidth}>{\raggedright\arraybackslash}p{0.22\textwidth}@{}}
\caption{Computer-assisted statements. ``Sanity'' rows test statements that are proved in the text and are not part of the proof.}\label{tab:certs}\\
\toprule
Item & Program & Statement certified & Value \\
\midrule
\endfirsthead
\toprule
Item & Program & Statement certified & Value \\
\midrule
\endhead
\bottomrule
\endfoot
S1 & \script{ws\_mech\_elem} & elementary monotonicity facts; Dirichlet identity~\eqref{eq:dirichlet}; Abel summation; $uf_r'=rf_r-f_{r+1}$ & exact (SymPy) \\
S2 & \script{ws\_mech\_lemA} & Gaussian moments; $D_0=\sqrt{2\pi/B}N_0$; $\tilde N_0$ real; Lemma~\ref{lem:gap}; scaling invariance; the decompositions in Lemma~\ref{lem:central}; algebraic reduction to $G$ in Lemma~\ref{lem:W}; independent re-implementation of $\varepsilon$ agrees with the production code & exact; agreement $1.7\cdot10^{-13}$ \\
S3 & \script{ws\_close\_elem} & telescoping in Lemma~\ref{lem:Fr} and its limits; Arb quadrature of $F_r$; identities in Lemma~\ref{lem:B1}; Gaussian integrals in Lemmas~\ref{lem:C0},~\ref{lem:delta}, Prop.~\ref{prop:CLa} & exact / Arb \\
C1 & \script{ws\_mech\_window} & Lemma~\ref{lem:W} and~\eqref{eq:CWconst} for all $\beta\ge\beta_A$ & $G\ge1.0092$, $\theta_A\le0.0040939$, $\nu'\in[-2.5102,2.0840]$, $\nu\ge3.1158$, $\nu_{CL}\le-1.95361$, $\nu_{CR}\ge1.04305$ \\
C2 & \script{ws\_mech\_tv} & Lemma~\ref{lem:TV} & see Lemma~\ref{lem:TV} \\
C3 & \script{ws\_mech\_minor} & Corollary~\ref{cor:cstar} & $c_\ast\ge0.15973$ \\
A1 & \script{ws\_p12\_boxes} & Theorem~\ref{thm:A}, $\theta\in[0.002,\theta_A]$ & $\Gamma\le0.42024$ \\
A2 & \script{ws\_p12\_asym} & Theorem~\ref{thm:A}, $\theta\le0.002$ & $\Gamma\le0.17605$ \\
B1 & \script{ws\_b\_step} & Theorem~\ref{thm:B}, $\theta\in[0.002,\theta_A]$ & $\Sigma\ge0.91918$ \\
B2 & \script{ws\_b\_asym} & Theorem~\ref{thm:B}, $\theta\le0.002$ & $\Sigma\ge0.96039$ \\
B3 & \script{ws\_b\_edges}, \script{ws\_mech\_edges} & edge values of $L_2/\theta$ (not needed for unimodality) & $\ge5.588$; $\le-0.505$ \\
M & \script{ws\_mech\_mono} & monotonicity in $\theta$ of the majorants in Lemma~\ref{lem:C1} and Section~\ref{sec:proofA} & exact (SymPy) \\
R1 & \script{ws\_mech\_cd}, \script{ws\_cd\_CR} & Proposition~\ref{prop:CR} & $0.3852<1$ \\
R2 & \script{ws\_mech\_cla}, \script{ws\_cd\_CLa} & Proposition~\ref{prop:CLa} & $\rho\le0.0673<0.8976$ \\
R3 & \script{ws\_mech\_cd}, \script{ws\_cd\_CLb} & Proposition~\ref{prop:CLb} & min.\ ratio $5.82>1$ \\
R4 & \script{ws\_close\_D}, \script{ws\_cd\_D} & Proposition~\ref{prop:D}: $\partial_k\Phi$, $\Psi_D'$ in closed form, $\Psi_D(s_A)>0$ & $\Psi_D(s_A)=0.30456$ \\
T & \script{ws\_close\_thresh} & Table~\ref{tab:thresholds} and the cover in Section~\ref{sec:assembly}, reading the values certified above & --- \\
F1 & \script{ws\_fc\_exact.c} & Section~\ref{sec:finite}, exact integers, $n\le2\cdot10^5$ & 0 violations \\
F2 & \script{ws\_fc\_check.c} & Section~\ref{sec:finite}, directed-rounding intervals, $n\le2\cdot10^5$ & 0 violations, identical log \\
\midrule
Sanity & \script{ws\_close\_rep} & \eqref{eq:rep1} in exact rational arithmetic ($4959$ cases), \eqref{eq:rep2} by quadrature, Lemmas~\ref{lem:psider}, \ref{lem:Hder}, \ref{lem:D1}, \ref{lem:E} on instances & --- \\
Sanity & \script{ws\_b\_check}, \script{ws\_p12\_window}, \script{ws\_cd\_window}, \script{ws\_p12\_minor}, \script{ws\_b\_minorM} & pointwise cross-checks of the above with independent code & --- \\
\end{longtable}
}

\subsection{Trust base}
The proof relies on: the correctness of Arb's ball arithmetic as exposed by python-flint 0.9.0 (including its polylogarithm, zeta, gamma,
error-function and integration routines); SymPy 1.14 for exact symbolic identities; the GCC 13.3 compiler for the integer code of the finite
check (cross-checked by the independent interval program and, for $n\le2000$, by Python big integers); and the following classical results,
which are quoted rather than proved: Taylor's theorem with integral remainder, the mean value theorem, the Weierstrass M-test for termwise
differentiation, and additivity of total variation. All other steps are proved in this paper.

%======================================================================
\appendix
\section{The error bound in scaled form}\label{app:eps}
%======================================================================

We record the expression for $\varepsilon_k$ of Proposition~\ref{prop:eps} as it is evaluated. Set $B=1$ and replace $\kappa_r,c_r$ by
$A_r,C_r$ of~\eqref{eq:AC} and $\varphi_0$ by $w_0=0.4\sqrt{b/\theta}$. Write $a_r=A_r/r!$, $\delta_0=A_4w_0^2/12+A_6w_0^4/360$ and
$B'=1-\delta_0$. For a polynomial $P(t)=\sum p_jt^j$ with $p_j\ge0$ and $A>0$ put
\[
  \mathcal I_A[P]:=\frac1{\sqrt{2\pi}}\int_\RR P(|w|)e^{-Aw^2/2}\dd w=\sum_jp_j\,\frac{2^{j/2}\Gamma(\frac{j+1}2)}{\sqrt\pi}\,A^{-(j+1)/2}.
\]
Let $Q_{\mathrm{abs}}=1+a_3t^3+a_4t^4+a_5t^5+\frac12a_3^2t^6+a_3a_4t^7+\frac16a_3^3t^9$ and
$P_{\mathrm{abs}}=C_1t+\frac12(C_2+C_1^2)t^2+\frac16C_3t^3+\frac1{24}C_4t^4$. Then
\[
\begin{aligned}
E_N&=\mathcal I_{B'}[C_1t\,T_\omega]+\mathcal I_1[Q_{\mathrm{abs}}T_H]+e^{-w_0^2/4}\,\mathcal I_{1/2}[Q_{\mathrm{abs}}P_{\mathrm{abs}}]+2\mathfrak m,\\
E_D&=\mathcal I_{B'}[T_\omega]+e^{-w_0^2/4}\,\mathcal I_{1/2}[Q_{\mathrm{abs}}]+\mathfrak m,
\end{aligned}
\]
where $T_\omega,T_H$ are as in Section~\ref{sec:main} with $(\kappa_r,c_r)\to(A_r,C_r)$, and
\[
  \mathfrak m:=2\pi\,e^{-c_\ast/\theta}\Bigl(\frac{b}{2\pi\theta^3}\Bigr)^{1/2}
\]
is the minor-arc term $2\pi M_\chi/\sqrt{2\pi/B}$ with Corollary~\ref{cor:cstar}. (Since $\mathcal I_A$ integrates against $e^{-Aw^2/2}$, the central-arc terms carry the factor $B'^{-1/2}$ and the tail terms the
factor $\sqrt2$ automatically; the code evaluates $\mathcal I_A[P]=A^{-1/2}\,\E P(|Z_A|)$ with $Z_A\sim N(0,1/A)$.) With $N_0$, $M_2$ and
$\mathrm{Num}$ in scaled form,
\[
  \Delta=\frac{E_N+|\tilde N_0/D_0|E_D}{N_0-E_D}+\frac{|\mathrm{Num}|}{N_0},\qquad \varepsilon_k=-\log(1-\Delta/M_2).
\]
With the orders of magnitude noted after Lemma~\ref{lem:gap}, $E_N=O(\theta^{7/2})$, $E_D=O(\theta^2)$, $\tilde N_0/D_0=O(\theta^2)$ and
$\mathrm{Num}=O(\theta^4)$ (the tails and minor arcs being exponentially small), so $\varepsilon_k=O(\theta^{7/2})$, which is small compared
with $\theta q\asymp\theta^2$ for $\nu'$ in a bounded range. Theorem~\ref{thm:A} quantifies this.

%======================================================================
\section{The monotone majorant for \texorpdfstring{$\theta\le0.002$}{theta <= 0.002}}\label{app:regionII}
%======================================================================

Let $\theta_1=0.002$, $t=\sqrt\theta\le t_1=\sqrt{\theta_1}$, and fix a cell $[v_0,v_0']$ of $\nu'$. By~\eqref{eq:AC} and
$q=\theta e^{-\nu'}=t^2e^{-\nu'}$:
\begin{itemize}
\item $A_r=s_rb^{-r/2}t^{r-2}$, where $s_r\le r!\zeta_2+\theta_1\TV(f_r)$ and $b\ge F_2(\nu_{\min}-\theta_1)-\theta_1\TV(f_2)$;
\item $C_r=f_r(\nu)b^{-r/2}t^r$ with $f_r(\nu)=\nu^re^{-\nu}P_{r-1}(e^{-\nu})/(1-e^{-\nu})^r\le\nu^r\,q\,P_{r-1}(y)/(1-y)^r$,
  $y=e^{-\nu_{\min}}$ (Eulerian polynomials $P_{r-1}$ have nonnegative coefficients);
\item $C_1=c_1(\theta^3/b)^{1/2}$ with $c_1=1+\nu e^{-\nu'}/(1-q)$.
\end{itemize}
Each term of $E_N$, $E_D$, $\mathrm{Num}$ and $M_2-1$ is a polynomial with nonnegative coefficients in these quantities, except for the
explicitly signed terms of $M_2$ and $N_0$, which are bounded below termwise (for instance $N_0\ge1-\frac5{24}A_3^2$). Expanding, each term
is a nonnegative coefficient times $t^p\,q^{b_q}\,\nu^j$. After division by $\theta q=t^2q$ the monomial becomes
$t^P\nu^je^{-(b_q-1)\nu'}$, and at fixed $\nu'$, $\nu=\nu'+2\log(1/t)$. By the identity in Section~\ref{sec:proofA}, $t\mapsto t^P\nu^j$ is
nondecreasing on $(0,t_1]$ when $P>0$ and $\nu_{\min}>2j/P$ (for $j<0$ it is increasing whenever $P>0$). The program asserts these two
conditions for every non-constant monomial, evaluates each at $t=t_1$ with $\nu$ at the appropriate end ($\nu\le v_0'+\log(1/\theta_1)$ if $j\ge0$,
$\nu\ge\nu_{\min}$ if $j<0$), and bounds $e^{-(b_q-1)\nu'}$ at the appropriate end of the cell. The non-polynomial factors
$e^{-w_0^2/4}=e^{-0.04b/\theta}$ and $e^{-c_\ast/\theta}$ are handled by
$\frac{d}{d\theta}\log(\theta^{-p}e^{-a/\theta})=(a-p\theta)/\theta^2$, which is positive for $\theta<a/p$. The same method, applied to
the increments of Lemma~\ref{lem:B1} at the two points $k$ and $k+1$, gives Region II of Theorem~\ref{thm:B}.

\subsection*{Acknowledgments}
AI tools (Anthropic's Claude) were used in this work, including for the computations, the accompanying code, and the preparation
of the manuscript. The author takes full responsibility for the content, and all computational claims can be reproduced from the
public repository.

%======================================================================

\end{document}